\newcommand{\pD}[2]{\frac{\partial #1}{\partial #2}}
\newcommand{\rD}[2]{\frac{d #1}{d #2}}
\newcommand{\vn}[1]{\lVert#1\rVert}
\newcommand{\IP}[2]{\left< #1 , #2 \right>}
\newcommand{\eIP}[2]{\left(\left. #1 \right| #2 \right)}
\newcommand{\sfrac}[2]{\text{\fontsize{5}{5}\selectfont$\frac{#1}{#2}$}}
\newcommand{\R}{\ensuremath{\mathbb{R}}}
\newcommand{\N}{\ensuremath{\mathbb{N}}}
\newcommand{\hH}{\ensuremath{h_{|H|}}}
\newcommand{\DP}{\ensuremath{\delta^{(p)}}}
\newtheorem{thm}{Theorem}
\newtheorem{cor}[thm]{Corollary}
\newtheorem{prop}[thm]{Proposition}
\newtheorem{lem}[thm]{Lemma}
\theoremstyle{remark}
\newtheorem*{rmk}{Remark}
\begin{document}

\title{Lifespan theorem for constrained surface diffusion flows}
\author{James McCoy
   \and Glen Wheeler$^*$
   \and Graham Williams}
\thanks{* Corresponding author, \texttt{gew75@uow.edu.au}}
\address{School of Mathematics and Applied Statistics\\
         University of Wollongong\\
         Northfields Ave\\
         Wollongong, NSW 2500}

\begin{abstract}
We consider closed immersed hypersurfaces in $\R^{3}$ and $\R^4$ evolving by a class of constrained
surface diffusion flows.  Our result, similar to earlier results for the Willmore flow, gives both a
positive lower bound on the time for which a smooth solution exists, and a small upper bound on a
power of the total curvature during this time.  By phrasing the theorem in terms of the
concentration of curvature in the initial surface, our result holds for very general initial data
and has applications to further development in asymptotic analysis for these flows.
\keywords{global differential geometry\and fourth order\and geometric analysis}
\subjclass{53C44\and 58J35}
\end{abstract}

\maketitle


%

\section{Introduction.}

Let $f:M^n\times[0,T)\rightarrow\R^{n+1}$ be a family of compact immersed hypersurfaces $f(\cdot,t)
= f_t: M \rightarrow f_t(M) = M_t$ with associated Laplace-Beltrami operator $\Delta$, unit normal vector
field $\nu$, and mean curvature function $H$.  The surface diffusion flow
\begin{equation}
\label{SD}
\tag{SD}
\pD{}{t}f = (\Delta H)\nu,
\end{equation}
and the more general constrained surface diffusion flows
\begin{equation}
\tag{CSD}
\label{CSD}
\pD{}{t}f = (\Delta H + h)\nu,
\end{equation}
where $h:I\rightarrow\R$ and $I\supset[0,T)$, are the chief objects of interest for this paper.  Our
aim is to begin a systematic study of the regularity of the flows \eqref{CSD}.  We are motivated
chiefly by the examples
\begin{align*}
h\equiv 0,\ 
h_H &= \frac{\int_M \vn{\nabla H}^2 d\mu}{\int_M H d\mu}
,\ 
\hH = \frac{\int_M \vn{\nabla H}^2 d\mu}{\int_M |H| d\mu}
\text{,\ and}\ 
h_K = \frac{-\int_M (\Delta H)K d\mu}{\int_M K d\mu},
\end{align*}
where $K$ is the Gauss curvature of $M_t$.

The first is simply surface diffusion flow \eqref{SD}.  Using $\text{Vol }M_t$ to denote the volume
enclosed by $M_t$ in $\R^{n+1}$ we compute
\begin{align*}
\rD{}{t}\text{Vol }M_t &= \int_{M} \Delta Hd\mu = 0
\text{, and}\\
\rD{}{t}\int_{M} d\mu &= \int_{M} H\Delta H d\mu = -\int_M \vn{\nabla H}^2 d\mu \le 0;
\end{align*}
so that a manifold evolving by \eqref{SD} will exhibit conservation of enclosed volume and monotonic
decreasing surface area.  Further, surface area is preserved exactly when the mean curvature of
$M_t$ is constant.  It is these geometric characteristics of the surface diffusion flow which
motivate the generalisation to constrained surface diffusion flows.  For example with $h=h_H$, while
$\int_M Hd\mu\ne0$ we have
\begin{align*}
\rD{}{t}\int_{M} d\mu &= \int_{M} H\Delta H d\mu + h_H\int_M H d\mu \\
                      &= - \int_{M} \vn{\nabla H}^2 d\mu
                        + \frac{\int_M \vn{\nabla H}^2 d\mu}{\int_M H d\mu}\int_M H d\mu = 0,
\end{align*}
and now surface area is conserved.  Volume is monotonic increasing or decreasing depending on 
the sign of $\int_M H d\mu$, and preserved only when $H$ is constant.  
Unfortunately, it seems more difficult to show that qualities such as convexity are preserved.
This is due to the
absence of a maximum principle.  In particular, $\int_M H d\mu$ could approach zero under
\eqref{CSD} with $h=h_H$, which would cause the flow to be undefined, and most likely \emph{without}
a curvature singularity.  This motivates the use of $\hH$, where we replace the denominator with
total mean curvature $\int_M |H|d\mu$.  For this flow we compute
\begin{align*}
\rD{}{t}\text{Vol }M_t &= \int_{M} (\Delta H + \hH) d\mu
                        = |M|\frac{\int_M \vn{\nabla H}^2 d\mu}{\int_M |H| d\mu} \ge 0
\text{,\ \ \ and}\\
\rD{}{t}\int_{M} d\mu &= \int_{M} H(\Delta H + \hH) d\mu\\
                      &= -\int_M \vn{\nabla H}^2 d\mu + \int_M \vn{\nabla H}^2 d\mu
                          \frac{\int_M H d\mu}{\int_M |H| d\mu}
                      \le 0.
\end{align*}
Here enclosed volume and surface area are monotonic increasing and decreasing respectively.
We also have not only that surface area is stationary
(constant in time) if $H$ is constant, but volume also.  
It can also be observed that if volume is constant, then surface area is necessarily constant.  This
is in contrast to \eqref{SD} flow, where volume is constant regardless of the behaviour of the
surface area.
Further, the flow speed itself is non-zero for surfaces of piecewise linear mean curvature.  This
leads us to believe that singularity development and asymptotic behaviour under \eqref{CSD} flow
with $h=\hH$ will be easier to understand compared with that of \eqref{SD} flow.  (Consider for
example a clothoid-type manifold.)  Finally, we use an inequality of
Burago-Zalgaller \cite{burago1988gi} to infer
\[
\int_M |H| d\mu \ge c_{B\!Z}|M_t|^{\frac{n}{n-1}}
 \ge c_{B\!Z}\big(\text{Vol }M_0\big) > 0,
\]
where we also used the isoperimetric inequality and the fact that volume is monotonic
increasing under this flow.  

Following a similar line of reasoning gives rise to several other `conservation' type flows.  For
example, with $h=h_K$ we calculate
\begin{align*}
\rD{}{t}\int_M H d\mu
   &= \int_M \big[(H^2-\vn{A}^2)(\Delta H + h_K) - \Delta^2 H\big] d\mu\\
   &= \int_M K(\Delta H) d\mu + h_K\int_M K d\mu = 0,
\end{align*}
where $\vn{A}^2$ denotes the squared norm of the second fundamental form of $M_t$.  Thus the
generalised mixed volume $\int_M H d\mu$ is always preserved under \eqref{CSD} flow with $h=h_K$.
In this case $\int_M K d\mu$ is the denominator of $h_K$, which is constant under the flow, and so
similarly to $\hH$ the constraint function $h_K$ is always defined.  One expects that global
analysis of flows such as this, which preserve a geometrically interesting quantity or keep it
monotone in time, would
lead to new geometric inequalities.  At the very least we would expect to obtain new proofs of
classical geometric inequalities, such as the isoperimetric inequality.  This is in direct analogy
with the work of Huisken \cite{huisken1986vpm}
and the first author \cite{J1,J2,J3} for example.


A first step in any program of analysis for these flows is a short time existence theorem.
The first appearance of such a theorem in the context of geometric heat flows in the literature is due to
Huisken-Polden \cite{huisken1999gee,polden:cas}.  
%
While the idea of proof there is clear, the usage of the linearisation is not.
This was later clarified in a much more restricted case by Sharples
\cite{sharples2004laq}, who considers only second order flows, but claims the techniques are
applicable also to the higher order case \cite{sharplesprivate}.
Independently, Escher, Mayer and Simonett \cite{escher98surface} apply theorems credited to Amann to
conclude short time existence for \eqref{SD} flow.  Unfortunately the quoted references 
are not readily available.

Despite this confusion, there is a much more standard approach to the problem of short time
existence for our flows \eqref{CSD} pointed out by Kuwert \cite{kuwertprivate}.
One may adapt the existence and uniqueness theory for higher order quasilinear parabolic partial
differential equations in $\R^n$.  This is easily accomplished by writing the problem as a graph,
and then we must consider a degenerate quasilinear fourth order parabolic partial differential
equation.

Depending on the constraint function, short time existence for this equation with $f_0$ at least
$C^4(M_0)$ follows from (for example) the linear estimates found in Eidel'man and Zhitarashu
\cite{eidelman1998pbv}, Solonnikov \cite{solonnikov1965bvp}, or an extension of those in Friedman
\cite{friedman1964pde}, combined with a fixed point argument.  Uniqueness can be obtained by a
method similar to that found in Li \cite{shuanhuli}, which is originally due to unpublished notes of
Amann.  The relevant theorem
is also stated in Amann \cite{amann1993nla}.  

Now, depending on the constraint function $h$ there are two possible approaches: if we have a known
function of $t$, such as $\frac{1}{1+t}$, $\sin t$, and so on, then 
one must show that $\partial_th(0)$ is bounded.  Otherwise, if we have
a constraint function consisting of integrals as above with $h_K$ and $h_H$, we use the initial
smoothness of the immersion $f_0$ to guarantee estimates for $h$.  For example, in the case where
$h=h_H$, there are up to seven derivatives of the immersion in $\partial_th$, and so if $f_0 \in
C^7(M_0)$, we will have a short time existence theorem.

Therefore one can see that the regularity of $f_0$ required to obtain short time existence is at
least $C^4(M_0)$, and if $h$ consists of integrals of curvature then the required regularity could
be quite high, depending on $h$.  This is what we mean below when we say `smooth enough'. 

\begin{thm}[Short time existence] \label{thmSTE}
For any smooth enough initial immersion $f_0:M^n\rightarrow\R^{n+1}$ and constraint function
$h:I\rightarrow\R$ with $I$ an interval containing $0$ and $h\in C^1(I)$, there exists
a unique nonextendable smooth solution $f:M\times[0,T)\rightarrow\R^{n+1}$ to \eqref{CSD} with
$f(\cdot,0) = f_0$, where $0 < T \le \infty$.  
\end{thm}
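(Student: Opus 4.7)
The plan is to reduce \eqref{CSD} to a scalar initial value problem for a height function $u$ on $M^n$, obtained by writing the evolving immersion as a normal graph $f(p,t) = f_0(p) + u(p,t)\nu_0(p)$ over a tubular neighbourhood of $f_0(M)$. A direct geometric computation converts \eqref{CSD} into a quasilinear fourth order parabolic equation of the schematic form
\[
\pD{u}{t} = -a(u,\nabla u)\,\Delta_0^2 u + b(u,\nabla u,\nabla^2 u,\nabla^3 u) + h(t)\,c(u,\nabla u),
\]
where $\nabla$ and $\Delta_0$ are the connection and Laplacian on $M_0$, $a > 0$ for small $\vn{\nabla u}$, and $c(0,0) = 1$. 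The linearisation at $u \equiv 0$ is uniformly parabolic on the compact manifold $M_0$, and it is this problem to which classical linear theory will apply.

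With this reduction in hand, I would set up a contraction mapping argument on a short time interval $[0,\tau]$ inside a ball of a parabolic H\"older space $C^{4+\alpha,1+\alpha/4}(M\times[0,\tau])$. The map $\Phi$ sends a candidate $u$ to the unique solution $v$ of the linear problem obtained by freezing coefficients at $u$; existence, uniqueness and the crucial Schauder estimate for this linear problem are supplied by Solonnikov \cite{solonnikov1965bvp} or Eidel'man--Zhitarashu \cite{eidelman1998pbv}. Shrinking $\tau$ and using that the nonlinear maps $a,b,c$ are smooth and vanish to the appropriate order in $u$ and its derivatives turns $\Phi$ into a strict contraction, whose fixed point is the desired short time solution.

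The main subtlety concerns the constraint function. If $h = h(t)$ is an a priori prescribed function, the hypothesis $h \in C^1(I)$ is exactly what the above argument needs as input. If, however, $h$ is a nonlocal geometric functional of the immersion such as $\hH$ or $h_K$, one must prove along each candidate $u$ that $t \mapsto h_u(t)$ is well defined and $C^1$ with modulus controlled by the parabolic H\"older norm of $u$. A direct count shows that up to seven derivatives of $f$ enter $\partial_t h_H$, so we require $f_0 \in C^k(M_0)$ for $k$ sufficiently large depending on $h$ (seven for $h_H$, fewer for simpler constraints); this is exactly what ``smooth enough'' encodes, and it is also what guarantees a uniform positive lower bound on any denominator appearing in $h$ on the time interval used for the contraction. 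I expect this uniform control of the nonlocal term $h_u(t)$ on the contraction ball to be the principal technical obstacle, since $h_u$ is both nonlocal and of higher differential order than the leading parabolic term.

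Smoothness, uniqueness, and maximal extension are then standard. Parabolic bootstrap applied successively to tangential derivatives of $u$ upgrades the $C^{4+\alpha,1+\alpha/4}$ fixed point to a $C^\infty$ solution. Uniqueness of two solutions $u_1,u_2$ with the same initial datum follows by subtracting and writing the linear fourth order parabolic equation satisfied by $w = u_1 - u_2$, to which one applies either an energy estimate or the Amann--Li argument \cite{shuanhuli}. Finally, the existence of a unique nonextendable $T \in (0,\infty]$ follows by a standard maximality argument: the set of times to which the solution extends smoothly is nonempty by the preceding step, and by uniqueness any two such extensions agree on their common domain, so the supremum is attained as a nonextendable smooth solution.
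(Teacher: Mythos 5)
Your proposal takes essentially the same route as the paper's own (brief) treatment: reduce \eqref{CSD} to a quasilinear fourth order parabolic scalar equation via a graph representation, apply the linear Schauder theory of Solonnikov and Eidel'man--Zhitarashu together with a fixed point argument, handle the nonlocal constraint function by using initial regularity of $f_0$ (with the seven-derivative count for $h_H$), and obtain uniqueness via the Amann--Li argument. The paper itself only sketches this proof and refers to \cite{mythesis} for details, so your fleshed-out contraction-mapping version is a faithful and sensible expansion of exactly the strategy the authors indicate.
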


For $T\in(0,\infty]$ a solution $f:M\times[0,T)\rightarrow\R^{n+1}$ to \eqref{CSD} is nonextendable
if $T\ne\infty$ and there is no $\delta>0$ such that
$\tilde{f}:M\times[0,T+\delta)\rightarrow\R^{n+1}$ is also a solution to \eqref{CSD} with
$\tilde{f}(\cdot,0) = f_0$.  For the interested reader, a more detailed discussion of this theorem
can be found in \cite{mythesis}.


Motivated by the observation that \eqref{SD} flow can also be derived by considering the
$H^{-1}$-gradient flow for the area functional (see Fife \cite{fife2000mps}), and the recent work of
Kuwert \& Sch\"atzle \cite{kuwert2001wfs,kuwert2002gfw} on the gradient flow for the Willmore
functional, we present the following theorem.

\begin{thm}[Lifespan Theorem]
\label{t_lifespan}
Suppose $n\in\{2,3\}$ and let $f:M^n\times[0,T)\rightarrow\R^{n+1}$ be a closed immersion with
$C^\infty$ initial data evolving by 
\begin{equation}
\tag{CSD}
\pD{}{t}f = (\Delta H + h)\nu.
\end{equation}
Then there are constants $\rho>0$, $\epsilon_0>0$, and $c<\infty$ such that if $h : I \supset
[0,\frac{1}{c}\rho^4] \rightarrow \R$ is a function satisfying 
\begin{equation}
\label{A1}
\tag{A1}
\vn{h}_{\infty,I} < \infty,
\end{equation}
and $\rho$ is chosen with
\begin{equation}
\label{eq1}
\int_{f^{-1}(B_\rho(x))} \vn{A}^n d\mu\Big|_{t=0} = \epsilon(x) \le \epsilon_0
\qquad
\text{ for any $x\in\R^{n+1}$},
\end{equation}
then for $n=2$ the maximal time $T$ of smooth existence for the flow \eqref{CSD} with initial data $f_0 =
f(\cdot,0)$ satisfies
\begin{equation}
\label{eq2}
T \ge \frac{1}{c}\rho^4,
\end{equation}
and we have the estimate
\begin{equation}
\label{eq3}
\int_{f^{-1}(B_\rho(x))} \vn{A}^n d\mu \le c\epsilon(x)
\qquad\text{ for }\qquad
0\le t \le \frac{1}{c}\rho^4.
\end{equation}
For $n=3$, the conclusions \eqref{eq2}, \eqref{eq3} hold under the additional assumption that
there exists an absolute constant $C_{A\!B}\in(0,\infty)$ such that
\begin{equation}
\label{AB}
\tag{AB}
|M_t| \le C_{A\!B},
\qquad\text{ for }\qquad 0 \le t \le \frac{1}{c}\rho^4.
\end{equation}
\end{thm}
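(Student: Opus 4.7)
The plan is to adapt the strategy of Kuwert--Sch\"atzle for Willmore flow to the constrained surface diffusion setting. The core idea is that the smallness hypothesis \eqref{eq1} gives local control of $\vn{A}_{L^n}$ in extrinsic balls, and one shows this remains small for a definite amount of time $\sim\rho^4$ via a continuity argument. Throughout I would use the standard localisation device $\gamma=\T\gamma\circ f$, where $\T\gamma\in C^2_c(\R^{n+1})$ is a cut-off supported in $B_\rho(x)$ with $\vn{D\T\gamma}_\infty\le c/\rho$ and $\vn{D^2\T\gamma}_\infty\le c/\rho^2$. This brings the analysis onto the moving hypersurface while preserving the parabolic scaling $\rho\mapsto\lambda\rho$, $t\mapsto\lambda^4 t$ inherited from \eqref{SD}.

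The first technical step is to derive the evolution equations for $A$ and $|A|^2$ under \eqref{CSD}. Since $\partial_t f=(\Delta H+h)\nu$ and $h$ is spatially constant, one obtains
\[
\partial_t h_{ij} = -\nabla_i\nabla_j\Delta H + (\Delta H)\,h_{ik}h^k_{\ j} + h\,h_{ik}h^k_{\ j},
\]
so schematically $\partial_t|A|^2 = -2\IP{\nabla^2\Delta H}{A} + A\ast A\ast\nabla^2 A + A^{\ast 5} + h\,A^{\ast 3}$. Multiplying $|A|^{n-2}\partial_t|A|^2$ by a high power $\gamma^s$, integrating by parts, and invoking the Kuwert--Sch\"atzle divergence identities on the $\Delta H$-contribution yields the usual dissipation $\int|\nabla^2 A|^2|A|^{n-2}\gamma^s\,d\mu$ plus the familiar cut-off errors from derivatives landing on $\gamma$ or on $\nu\cdot D\T\gamma$. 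The genuinely new terms are those arising from $h$; using \eqref{A1} together with H\"older's inequality and the Michael--Simon Sobolev inequality, these reduce to $\vn{h}_{\infty,I}$ times mixed $L^p$-norms of $A$ on $\mathrm{supp}\,\gamma$, which can be absorbed into the dissipation and the cut-off error whenever $\int_{f^{-1}(B_\rho(x))}|A|^n\,d\mu$ is small.

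Combining these estimates yields a differential inequality of the shape
\[
\rD{}{t}\int_{M_t}|A|^n\gamma^s\,d\mu + \frac{1}{c}\int_{M_t}|\nabla^2 A|^2|A|^{n-2}\gamma^s\,d\mu \le \frac{c}{\rho^4}\int_{M_t\cap\mathrm{supp}\,\gamma}d\mu + c\,\vn{h}_{\infty,I}\,R(t),
\]
where $R(t)$ collects remainders absorbable once $\sup_x\int_{f^{-1}(B_\rho(x))}|A|^n\,d\mu$ is sufficiently small. Interpolating in the Kuwert--Sch\"atzle manner then converts this into an ODI for $\psi(t)=\sup_{x\in\R^{n+1}}\int_{f^{-1}(B_\rho(x))}|A|^n\,d\mu$. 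The standard contradiction argument---letting $t^*$ be the supremum of times $\le T$ at which $\psi\le C\epsilon_0$ and ruling out $t^*<\rho^4/c$ by integrating the ODI---delivers \eqref{eq3} on $[0,\rho^4/c]$; combined with Theorem~\ref{thmSTE} and the standard higher-regularity bootstrap driven by local $L^n$-smallness of $A$, this excludes any blow-up before $\rho^4/c$ and so also yields \eqref{eq2}.

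The dimension split is the expected one: for $n=2$ the quantity $\int|A|^2$ is conformally scale-invariant and Michael--Simon closes without reference to $|M_t|$. For $n=3$ the local energy $\int|A|^3$ is not conformally invariant, and controlling the constraint remainder $\int h\,A\,|A|^2\gamma^s\,d\mu$ by the dissipation requires bounding an $L^1$-type term on $\mathrm{supp}\,\gamma$ that cannot be dominated without an a priori bound on total area; this is precisely the role of \eqref{AB}. I expect the main obstacle to be the careful bookkeeping of commutator and $\gamma$-derivative terms so that the coefficient of the dissipation remains strictly positive after absorbing the constraint contribution, and the explicit verification that the threshold $\epsilon_0$ and the final constant $c$ depend only on $\vn{h}_{\infty,I}$ and (for $n=3$) on $C_{A\!B}$, rather than on the solution itself.
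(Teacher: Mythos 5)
Your overall strategy mirrors the paper's at a high level: rescale to $\rho=1$, localise with a cutoff $\gamma=\tilde\gamma\circ f$, derive a local evolution inequality, run a continuity (Struwe-type) argument driven by Michael--Simon and the Kuwert--Sch\"atzle interpolation machinery, and bootstrap via higher-derivative estimates to rule out blow-up before $\rho^4/c$. But there is one technical choice where you diverge from the paper and which would cause real trouble for $n=3$, and a related error term that is wrong even for $n=2$.

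You propose evolving $\int |A|^n\gamma^s\,d\mu$ directly, multiplying $\partial_t|A|^2$ by $|A|^{n-2}$, so the dissipation you produce is the weighted $\int|\nabla_{(2)}A|^2|A|^{n-2}\gamma^s\,d\mu$. The paper instead always evolves the unweighted $L^2$ quantity $\int\vn{A}^2\gamma^s\,d\mu$ (Propositions~\ref{p:prop3}, \ref{EQpropBEwithh}, \ref{p:prop44}), which gives the clean dissipation $\int\vn{\nabla_{(2)}A}^2\gamma^s\,d\mu$ and cut-off errors of the precise form $c\int\vn{A}^2\gamma^{s-4}\,d\mu$. The bridge back to the $L^n$ smallness \eqref{eq1} is then built separately: for $n=2$ the two norms coincide, and for $n=3$ the paper passes through the $\vn{A}_\infty$ estimate of Proposition~\ref{p:prop46} (with $k=0$) together with H\"older and \eqref{AB}; this is exactly what the constant $c_{P22}$ in \eqref{e:prop44pluscovering} records. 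The remark immediately following Proposition~\ref{p:prop44} addresses your route head-on: forcing $L^3$ norms changes the exponents of the $P$-terms and would require re-deriving the multiplicative Sobolev estimate of Lemma~\ref{MS1lem} with the weaker weighted dissipation, while still needing \eqref{AB}. So your alternative is not impossible, but it is a non-trivial deviation from what is written, not simply ``the same approach with $h$ added''.

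The more concrete gap is the displayed ODI. Your drift term $\frac{c}{\rho^4}\int_{M_t\cap\mathrm{supp}\,\gamma}d\mu$ is not what the computation produces and is not controllable for $n=2$, where no area bound is assumed. The paper's Proposition~\ref{p:prop44} gives a drift of size $c\,\epsilon^{2/n}\,t$, proportional to the small concentration $\epsilon=\sup_{[0,T^*]}\int_{[\gamma>0]}\vn{A}^n d\mu$, which is precisely what makes the continuity argument (the three-step structure around $t_0^{(n)}$, $\eta(t)$, and the covering of $B_1$ by translates of $B_{1/2}$) close. With $|M_t\cap\mathrm{supp}\,\gamma|$ in its place, the right-hand side after time $t\sim\rho^4/c$ is not small relative to $\epsilon_0$ and the key inequality \eqref{e:prop44pluscovering} cannot be recovered. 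The correct cut-off error is $\sim\frac{c}{\rho^4}\int\vn{A}^2\gamma^{s-4}d\mu$ (together with the higher-order $P$-terms handled by Lemma~\ref{MS1lem}), which is $\epsilon$-small under \eqref{eq9}.
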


The restriction on the dimension of the evolving immersion is due to both the exponent in the
Michael-Simon Sobolev inequality, and the scaling of the total squared curvature functional.
For flows where the evolution of the surface area is bounded (such as \eqref{SD}
and \eqref{CSD} with $h=\hH$) we have removed the latter restriction by considering
\eqref{eq1},
which is a natural generalisation of (1.4) in \cite{kuwert2002gfw}.  The size of $\epsilon_0$ is
determined indirectly by the bound on surface area for the flow in question.
As to the exponent in the Michael-Simon Sobolev inequality, the interplay between the evolution
equations and our techniques using integral estimates forces $n < 4$; see Section 5 for a discussion
of this issue.  

At first glance, the choice in \eqref{eq1} may appear somewhat restrictive, since $\epsilon_0$ (the
size of which is dictated by estimates to come) may be very small.  However, it is clear that if the
initial surface $M_0$ is of finite total curvature (that is, $\int_{M} \vn{A}^n d\mu\big|_{t=0} <
\infty$), then there will exist a positive $\rho = \rho(\epsilon_0, M_0)$ such that \eqref{eq1} is
satisfied.  Therefore, in terms of allowable initial surfaces $M_0$, we are only excluding those for
which the total curvature is infinite.

The assumption \eqref{A1} also appears restrictive.  For an a priori known function of time, it is
appropriate, but for our given examples ($h=\hH, h_K$) it is not clear that \eqref{A1} is
satisfied.  We will show in Section 3 that constraint functions similar to $\hH$ admit an a
priori bound, while constraint functions of a form similar to $h_K$ remain just beyond our current
techniques.  It is in this sense which the two examples serve to differentiate between those
constraint functions which are relatively easy to handle, and those which just present difficulty.
The inequality
\[
\sup_{x,y\in f(M)}|x-y| = \text{extrinsic diameter} = d_{ext}
                        \le c_T(n)\int_{M} |H|^{n-1}d\mu
\]
due to Topping \cite{topping2008}
 will also play a major role,
allowing us to prescribe a class of constraint functions which admit a `localisation' procedure.
The extra assumptions required will be a growth condition, and a geometric condition: either bounded
surface area or bounded total mean curvature.


In a more global sense, we present the lifespan theorem with a perspective toward further analysis
of the \eqref{CSD} flows.  In particular, as the statement depends on the concentration of the
curvature of the initial surface, the result is particularly relevant to the analysis of asymptotic
behaviour in the following respect.  When considering a blowup of a singularity formed at some time
$T<\infty$ of the \eqref{CSD} flow, we wish to have that some amount of the curvature concentrates
in space.  From the theorem, if $\rho(t)$ denotes the largest radius such that \eqref{eq1} holds at
time $t$, then $\rho(t) \le \sqrt[4]{c(T-t)}$ and so at least $\epsilon_0$ of the curvature
concentrates in a ball $f^{-1}(B_{\rho(T)}(x))$.  That is,
\[
\lim_{t\rightarrow T} \int_{f^{-1}(B_{\rho(t)}(x))} \vn{A}^n d\mu \ge \epsilon_0,
\]
where $x = x(t)$ is understood to be the centre of a ball where the integral above is maximised.

As already mentioned, our motivation for the extension of \eqref{SD} to the more general class of
flows \eqref{CSD} is essentially mathematical.  However there does already exist a large body of
work on \eqref{SD} flow
itself, and study of \eqref{SD} alone is well motivated.  First proposed by the physicist Mullins
\cite{mullins1957ttg} in 1957 (two years before he proposed the mean curvature flow), it was
originally designed to model the formation of tiny thermal grooves in phase interfaces where the
contribution due to evaporation-condensation was insignificant.  Some time later, Davi, Gurtin, Cahn
and Taylor \cite{cahn1994sms,davi1990mpi} proposed many other physical models which give rise to
the surface diffusion flow.  These all exhibit a reduction of free surface energy and conservation
of volume; an essential characteristic of \eqref{SD} flow.  There are also other motivations for the
study of \eqref{SD}.  For example, two years later Cahn, Elliot and Novick-Cohen \cite{cahn1996che}
proved that \eqref{SD} is the singular limit of the Cahn-Hilliard equation with a concentration
dependent mobility.  Among other applications, this arises in the modeling of isothermal separation
of compound materials.

Analysis of the surface diffusion flow began slowly, with the first works appearing in the early
80s.  Baras, Duchon and Robert \cite{baras1984edi} showed the global existence of weak solutions for
two dimensional strip-like domains in 1984.  Later, in 1997 Elliot and Garcke \cite{elliott1997erd}
analysed \eqref{SD} flow of curves, and obtained local existence and regularity for $C^4$-initial
curves, and global existence for small perturbations of circles.  Significantly, Ito
\cite{Ito1999sdf} showed in 1998 that convexity will not be preserved under \eqref{SD}, even for
smooth, rotationally symmetric, closed, compact, strictly convex initial hypersurfaces.  In contrast
with the case for second order flows such as mean curvature flow, this behaviour appears
pathological.  Escher, Mayer and Simonett \cite{escher98surface} gave several numerical schemes for
modeling \eqref{SD} flow, and have also given the only two known numerical examples
\cite{mayer2001nss} of the development of a singularity: a tubular spiral and thin-necked dumbbell.
They also provide an example of an immersion which will self-intersect under the flow, a figure eight
knot.  In 2001, Simonett \cite{simonett2001wfn} used centre manifold techniques to show that for
initial data $C^{2,\alpha}$-close to a sphere, both the surface diffusion and Willmore flows
(Willmore flow in one codimension is $\partial_t f = \Delta H + \vn{A^o}^2H$, where $A^o = A -
\text{trace}_g\ A$) exist for all time and converge asymptotically to a sphere.

There have been many 
important works on fourth order flows of a slightly different character,
from Willmore flow of surfaces
 to Calabi flow, a fourth order flow of metrics.  Significant contributions to the analysis of these
flows by the authors Kuwert, Sch\"atzle, Polden, Huisken, Mantegazza and Chru\'sciel
\cite{chrusciel1991sge,kuwert2001wfs,kuwert2002gfw,mantegazza2002sge,polden:cas} are particularly
relevant, as the methods employed there are similar to ours here.

In our proof, we exploit the fact that for an $n$-dimensional immersion the integral
\[
\int_M \vn{A}^n d\mu
\]
is scale invariant.  The technique used by Struwe \cite{struwe1985ehm} 
is then
relevant, although as with all higher order flows the major difficulty is in overcoming the lack of
powerful techniques unique to the second order case.  In particular, we are without the maximum
principle, and this implies that the geometry of the surface could deteriorate, as in
\cite{Ito1999sdf}.  Therefore we are forced to use integral estimates to derive derivative curvature
bounds under a condition similar to \eqref{eq1}, and in calculating these estimates it is crucial to
only use inequalities which involve universal constants.  Interpolation inequalities similar in
nature to those used by Ladyzhenskaya, Ural'tseva and Solonnikov \cite{ladyzhenskaya1968laq} and
Hamilton \cite{RH}, and the Sobolev inequality of Michael-Simon \cite{michael1973sam}, are
invaluable in this regard.

The structure of this paper is as follows.  To apply the argument used by Struwe, we must prove two
key local integral estimates.  In Section 2 we collect various fundamental formulae from
differential geometry, set our notation, and state some basic results.  The goal of Section 3 is to
show that the a priori bound \eqref{A1} is satisfied by a class of constraint functions, and to
detail the localisation procedure required to use the 
global constraint function in
local integral estimates.  Section 4 is concerned with estimating the evolution of local integrals
of derivatives of curvature.  Section 5 combines these estimates with Sobolev inequalities,
interpolation inequalities, and the results of Section 3 to conclude the two required key integral
estimates.  With these in hand, we adapt the argument of Struwe in Section 6 to prove the lifespan
theorem.  Section 7 contains some remarks on lifespan theorems for flows similar to \eqref{CSD}.


We note that a theorem similar to Theorem \ref{t_lifespan} was proposed in \cite{LU}, applying only
to the flow \eqref{SD}.  Our work includes a proof of this result, when $n=2$ and $n=3$.




\section{Notation and preliminary results.}

In this section we will collect various general formulae from differential geometry which we will
need when performing the later analysis.  We will adopt similar notation to Hamilton \cite{RH} and Huisken
\cite{huisken1984fmc}. We have as our principal object of study a smooth immersion
$f:M^n\rightarrow\R^{n+1}$ of an orientable compact hypersurface $M$, and induced metric tensor with
components
\[
g_{ij} = \eIP{\pD{}{x_i}f}{\pD{}{x_j}f},
\]
so that the pair $(M,g)$ is a Riemannian manifold.  In the above equation $\eIP{\cdot}{\cdot}$
denotes the regular Euclidean inner product, and $\pD{}{x_i}$ is the derivative in the direction of
the $i$-th basis vector of the ambient space, which in our case is the regular Euclidean partial
derivative.  When convenient we frequently use the abbreviation $\partial_i = \pD{}{x_i}$.

The Riemannian metric induces an inner product structure on all tensors,
which we define as the trace over pairs of indices with the metric:
\[
\IP{T^{i}_{jk}}{S^i_{jk}} = g_{is}g^{jr}g^{ku}T^i_{jk}S^s_{ru},\qquad \vn{T}^2 = \IP{T}{T},
\]
where repeated indices are summed over from $1$ to $n$.
The mean curvature $H$ is defined by
\[
H = g^{ij}A_{ij} = A_i^i,
\]
where the components $A_{ij}$ of the second fundamental form $A$ are given by
\begin{equation}
A_{ij} = -\eIP{\pD{{}^2}{x_i\partial x_j}f}{\nu}
       = \eIP{\pD{}{x_j}f}{\pD{}{x_i}\nu},
\label{PREPsff}
\end{equation}
where $\nu$ is the outer unit normal vector field on $M$.

The Christoffel symbols of the induced connection are determined by the metric,
\begin{equation}
\label{C3Echristoffelmetric}
\Gamma_{ij}^k = \frac{1}{2}g^{kl}
                \left(\pD{}{x_i}g_{jl} + \pD{}{x_j}g_{il} - \pD{}{x_l}g_{ij}\right),
\end{equation}
so that then the covariant derivative on $M$ of a vector $X$ and of a covector $Y$ is
\begin{align*}
\nabla_jX^i &= \pD{}{x_j}X^i + \Gamma^i_{jk}X^k\text{, and}\\
\nabla_jY_i &= \pD{}{x_j}Y_i - \Gamma^k_{ij}Y_k
\end{align*}
respectively.

From the expression \eqref{PREPsff} and the smoothness of $f$ we can see that the second fundamental form is
symmetric; less obvious but equally important is the symmetry of the first covariant derivatives of
$A$, 
\[ \nabla_iA_{jk} = \nabla_jA_{ik} = \nabla_kA_{ij}, \]
commonly referred to as the Codazzi equations.

The fundamental relations between components of the Riemann curvature tensor $R_{ijkl}$, the Ricci
tensor $R_{ij}$ and scalar curvature $R$ are given by Gauss' equation
\begin{align*}
R_{ijkl} &= A_{ik}A_{jl} - A_{il}A_{jk},\intertext{with contractions}
g^{jl}R_{ijkl}
   = R_{ik} 
  &= HA_{ik} - A_i^jA_j^k\text{, and}\\
g^{ik}R_{ik}
   = R
  &= H^2 - \vn{A}^2.
\end{align*}
We will need to interchange covariant derivatives; for vectors $X$ and covectors $Y$ we obtain
\begin{align*}
\nabla_{ij}X^h - \nabla_{ji}X^h &= R^h_{ijk}X^k = (A_{lj}A_{ik}-A_{lk}A_{ij})g^{hl}X^k,\\
\nabla_{ij}Y_k - \nabla_{ji}Y_k &= R_{ijkl}g^{lm}Y_m = (A_{lj}A_{ik}-A_{il}A_{jk})g^{lm}Y_m,
\end{align*}
where $\nabla_{i_1\ldots i_n} = \nabla_{i_1} \cdots \nabla_{i_n}$.  Further we define $\nabla_{(n)}T$
to be the tensor with components $\nabla_{i_1\ldots i_n}T_{j_1\ldots}^{k_1\ldots}$.  We also use for
tensors $T$ and $S$ the notation $T*S$ (as in Hamilton \cite{RH}) to denote a linear combination of
new tensors, each formed by contracting pairs of indices from $T$ and $S$ by the metric $g$ with
multiplication by a universal constant.  The resultant tensor will have the same type as the other
quantities in the equation it appears.  Keeping these in mind we also denote polynomials in the
iterated covariant derivatives of these terms by \[ P_j^i(T) = \sum_{k_1+\ldots+k_j = i}
c\nabla_{(k_1)}T*\cdots*\nabla_{(k_j)}T, \] where the constant $c\in\R$ is absolute and may vary
from one term in the summation to another.  As is common for the $*$-notation, we slightly abuse
this constant when certain subterms do not appear in our $P$-style terms. For example \begin{align*}
\vn{\nabla A}^2 &= \IP{\nabla A}{\nabla A}\\ &= 1\cdot\left(\nabla_{(1)}A*\nabla_{(1)}A\right) +
0\cdot\left(A*\nabla_{(2)}A\right)\\ &= P_2^2(A).  \end{align*} This will occur throughout the paper
without further comment.

The Laplacian we will use is the Laplace-Beltrami operator on $M$, with the components of $\Delta T$
given by
\[
\Delta T^i_{jk} = g^{pq}\nabla_{pq}T^i_{jk} = \nabla^p\nabla_pT^i_{jk}.
\]
Using the Codazzi equation with the interchange of covariant derivative formula given above, we
obtain Simons' identity:
\begin{align}
\Delta A_{ij} &= \nabla_{ij}H + HA_{il}g^{lm}A_{mj} - \vn{A}^2A_{ij}\notag\\
              &= \nabla_{ij}H + HA_{i}^lA_{lj} - \vn{A}^2A_{ij},\notag\\
\intertext{or in $*$-notation}
\Delta A &= \nabla_{(2)}H + A*A*A.              \tag{SI}\label{SimonsIdentity}
\end{align}
In the coming sections we will be concerned with calculating the evolution of the iterated covariant
derivatives of curvature quantities.  The following less precise interchange of covariant
derivatives formula (derived from the fundamental equations above) will be useful to keep in mind:
\[
\nabla_{ij}T = \nabla_{ji}T + P_2^0(A)*T.
\]
In most of our integral estimates (especially those in sections 4 and 5), we will be including a
function $\gamma:M\rightarrow\R$ in the integrand.  Eventually, this will be specialised to a smooth
cutoff function between concentric geodesic balls on $M$.  For these estimates however, we will only
assume that $\gamma = \tilde{\gamma}\circ f$, where 
\begin{equation*}
0\le\tilde{\gamma}\le 1,\qquad\text{ and }\qquad 
\vn{\tilde{\gamma}}_{C^2(\R^{n+1})} \le c_{\tilde{\gamma}} < \infty.
\end{equation*}
Using the chain rule, this implies $D\gamma = (D\tilde{\gamma}\circ f)Df$ and then
$D^2\gamma = (D^2\tilde{\gamma}\circ f)(Df,Df) + (D\tilde{\gamma}\circ f)D^2f(\cdot,\cdot)$.
Using the expression
\eqref{C3Echristoffelmetric} for the Christoffel symbols to convert the computations above to
covariant derivatives, and the Weingarten relations
\[
\partial_i\nu = A_{i}^j\partial_jf,\qquad \partial_i\partial_jf=-A_{ij}\nu, 
\]
to convert the derivatives of $\nu$ to factors
of the second fundamental form with the basis vectors $\partial_if$, we obtain the estimates
\begin{equation}
\tag{$\gamma$}
\label{e:gamma}
\vn{\nabla\gamma} \le c_{\gamma1},\qquad\text{ and }\qquad
\vn{\nabla_{(2)}\gamma} \le c_{\gamma2}(1+\vn{A}).
\end{equation}
For a given $\rho>0$, we also define the functions $\epsilon,
\DP:\R^{n+1}\times[0,T^*]\rightarrow\R$ as
\[
\epsilon(x) = \int_{f^{-1}(B_\rho(x))}\vn{A}^2d\mu,
\quad\text{ and }\quad
\DP(x) = \int_{f^{-1}(B_\rho(x))}\vn{A}^pd\mu.
\]
At times we will instead consider the set $[\gamma>0] = \{q\in M:\gamma(q)>0\}$ as the domain of
the integrals in $\epsilon(x)$ and $\DP(x)$.  

\section{A priori estimates for the constraint function.}

Our constraint functions are by their nature global notions (being functions of time only).  This is
a distinct advantage in some areas of the analysis: evolution equations first order in time and of
any order in space involve at most a linear factor of $h$.

When one wishes to prove local integral estimates however, the global nature of $h$ becomes an
issue.  We are faced with situations such as
\begin{align}
\rD{}{t}\int_{f^{-1}(B_\rho(x))}\vn{A}^2d\mu &+ \int_{f^{-1}(B_\rho(x))}\vn{\nabla_{(2)}A}^2d\mu
\notag
\\*
\label{hsec1}
 &\le h\int_{f^{-1}(B_{2\rho}(x))}\big(\vn{A}^3 + \vn{A}^2\big)d\mu + \text{``good terms''},
\end{align}
armed with a local smallness of curvature assumption
\[
\sup_{x\in\R^{n+1}\atop t\in[0,T^*]} \epsilon(x) \le \epsilon_0,\quad
\text{ or }
\quad\sup_{x\in\R^{n+1}\atop t\in[0,T^*]}\DP(x) \le \delta_0,
\]
and tasked with absorbing the term involving $h$, a global term, into
\[
\int_{f^{-1}(B_\rho(x))}\vn{\nabla_{(2)}A}^2d\mu,
\]
a local integral.  Assume for the sake of example that $h = \int_M k(\mathcal{W})d\mu$, where
$\mathcal{W}$ is the Weingarten map, and $h$ obeys an estimate
\[
h \le C_{A\!B\!S}\int_M \vn{A}^2d\mu \int_M \vn{\nabla_{(2)}A}^2d\mu,
\]
where $C_{A\!B\!S}$ is an absolute constant.  Then as a first attempt to `localise' the integrals on
the right one might estimate them by
\begin{align*}
\int_M \vn{A}^2d\mu& \int_M \vn{\nabla_{(2)}A}^2d\mu
\\
&\le
    c^2_\rho(t)\sup_{x\in\R^{n+1}}\int_{f^{-1}(B_\rho(x))} \vn{A}^2d\mu
               \sup_{x\in\R^{n+1}}\int_{f^{-1}(B_\rho(x))}\vn{\nabla_{(2)}A}^2d\mu
\\
&\le 
    c^2_\rho(t)\epsilon_0
\int_{f^{-1}(B_\rho(x_1))}\vn{\nabla_{(2)}A}^2d\mu,
\end{align*}
where $c_\rho(t)$ is the number of extrinsic balls of radius $\rho$ required to cover $f(M_t)$ and
$x_1\in\R^{n+1}$ is a point where the second supremum is attained.  The goal of course is to now
bound $c^2_\rho(t)\epsilon_0$ by $\frac{1}{2C_{A\!B\!S}}$ (for example), and absorb the entire term
on the left in \eqref{hsec1}.  Unfortunately, this will in general not be possible.  To attain a
smaller $\epsilon_0$, one must drive $\rho$ to zero, but this will in turn drive $c_\rho$ to
$\infty$.  Further, the scaling is unfavourable, making it difficult to know a priori if any
admissible $\rho > 0$ exists.  Finally, $c_\rho$ is a function of time, and without a uniform bound
we have little hope of absorbing the constraint function into a local integral.

With some minor modifications to the above idea, and assumptions on the flow, these problems can be
overcome and the argument carries through.  Our main result for this section is the following.

\begin{thm}
\label{thmapriorih}
Let $f:M^n\times[0,T^*]\rightarrow \R^{n+1}$ be a \eqref{CSD} flow with constraint function $h$
satisfying for some $j,k,l\in\N_0$
\begin{equation}
h \le \int_M P_j^2(A) + P_k^1(A) + P_l^0(A) d\mu
\label{ERRATAgrowthoncosntraintcondition}
\tag{A2}
\end{equation}
where for $m = \max\{2k-2,2j-k,l,n^2+n-2\}$ 
\[
\sup_{x\in\R^{n+1}}\delta^{(m)}(x) \le \delta^{(m)}_0 < \infty,
\]
and for a finite absolute constant $C_{A\!B}$
\begin{equation}
\tag{AB}
|M_t| \le C_{A\!B};
\end{equation}
on $[0,T^*]$.
 
Then for any $\rho > 0$, $x\in\R^{n+1}$, $t\in[0,T^*]$ there exists an $x_1\in\R^{n+1}$ such that
for any $\theta > 0$,
\[
h\int_{f^{-1}(B_{2\rho}(x))} \big(\vn{A}^4 + \vn{A}^2\big)d\mu
 \le \theta\int_{f^{-1}(B_\rho(x_1))} \vn{\nabla_{(2)}A}^2d\mu 
+ C_{U\!G\!L\!Y},
\]
if $j,k\ne0$, and otherwise
\[
h\int_{f^{-1}(B_{2\rho}(x))} \big(\vn{A}^4 + \vn{A}^2\big)d\mu
 \le C_{U\!G\!L\!Y},
\]
where $C_{U\!G\!L\!Y} = C_{U\!G\!L\!Y}(\theta,\delta^{(m)}_0,C_{A\!B},\rho,j,k,l,n)$.
\end{thm}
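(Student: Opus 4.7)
The strategy is to resolve the global/local tension flagged in the preceding discussion by constructing a bounded-overlap covering of $f(M_t)$ by extrinsic balls of radius $\rho$, splitting the integral defining the bound on $h$ into a controlled number of local pieces, and then applying interpolation (Peter--Paul against the top-order norm, via Michael--Simon where needed) to each local $P$-term in order to trade derivative factors against the free parameter $\theta\int_{f^{-1}(B_\rho(x_1))}\vn{\nabla_{(2)}A}^2\,d\mu$ on the worst ball.

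I would begin with the covering. Since $|M_t|\le C_{A\!B}$, the $n$-dimensional Hausdorff measure of $f(M_t)$ is at most $C_{A\!B}$, so a standard Besicovitch/Vitali argument yields points $y_1,\ldots,y_N\in\R^{n+1}$ with $f(M_t)\subset\bigcup_{i=1}^N B_\rho(y_i)$, uniformly bounded overlap, and $N\le N_0(C_{A\!B},\rho,n)$ independent of $t$. For any nonnegative $F$ on $M_t$ this gives
\[
\int_M F\,d\mu \le N_0\max_i\int_{f^{-1}(B_\rho(y_i))}F\,d\mu.
\]
Since $m\ge n^2+n-2\ge 4$ for $n\in\{2,3\}$, Hölder and $|M_t|\le C_{A\!B}$ give $\delta^{(p)}(x)\le(\delta^{(m)}_0)^{p/m}C_{A\!B}^{1-p/m}$ for $p\in\{2,4\}$, so after covering $B_{2\rho}(x)$ by finitely many $B_\rho$'s the ``test factor'' $\int_{f^{-1}(B_{2\rho}(x))}(\vn{A}^4+\vn{A}^2)\,d\mu$ is bounded by a constant $C(\delta^{(m)}_0,C_{A\!B})$. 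The theorem thus reduces to estimating $N_0\cdot C(\delta^{(m)}_0,C_{A\!B})$ times the maximal local integral of $P_j^2(A)+P_k^1(A)+P_l^0(A)$ by $\theta\int_{f^{-1}(B_\rho(x_1))}\vn{\nabla_{(2)}A}^2d\mu+C$ when $j,k\ne 0$, and by $C$ otherwise.

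The remaining step is interpolation on each $P$-term. $P_l^0(A)$ is a polynomial in $A$ with $l$ factors, controlled by $(\delta^{(m)}_0)^{l/m}C_{A\!B}^{1-l/m}$ since $l\le m$. In $P_k^1(A)$, Cauchy--Schwarz/Young distributes the single derivative, producing a factor of $\vn{\nabla A}^2$ and a pure power of $\vn{A}$ of degree $2k-2\le m$; a further interpolation/Peter--Paul step absorbs the $\vn{\nabla A}^2$ into $\eta\int\vn{\nabla_{(2)}A}^2+C(\eta)\int\vn{A}^2$. The $P_j^2(A)$ terms split analogously, producing factors of $\vn{\nabla_{(2)}A}$ and $\vn{\nabla A}^2$ multiplied by powers of $\vn{A}$ of degree at most $2j-k\le m$; Young then delivers $\eta\vn{\nabla_{(2)}A}^2$ plus bounded residuals. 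Choosing $\eta$ so that the accumulated coefficient on $\int_{f^{-1}(B_\rho(x_1))}\vn{\nabla_{(2)}A}^2d\mu$ equals $\theta$, with $x_1$ the cover ball maximising that integral, yields the claim. When $j=k=0$ no derivative factor appears and only $C_{U\!G\!L\!Y}$ remains.

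The principal technical obstacle is bookkeeping: one must verify that \emph{every} residual pure-$A$ power generated by the interpolation steps has exponent at most $m$, so that it is absorbed by the standing bound $\delta^{(m)}_0$ together with $C_{A\!B}$. The exponents $2k-2$, $2j-k$, $l$, $n^2+n-2$ in the definition of $m$ are precisely what is required: the first three account for the residuals from the three $P$-terms after Young's inequality, and the last accommodates the ambient $\vn{A}^4$ arising in both the test factor and the Michael--Simon estimate. Once this accounting is in place, the coefficient of $\int\vn{\nabla_{(2)}A}^2$ is a genuinely free parameter and the theorem follows.
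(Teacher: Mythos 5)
The central gap is the covering claim. You assert that $|M_t|\le C_{A\!B}$ alone, via ``a standard Besicovitch/Vitali argument,'' yields a cover of $f(M_t)$ by $N\le N_0(C_{A\!B},\rho,n)$ balls of radius $\rho$ with $N_0$ independent of $t$. This is false: a long, thin tube in $\R^{n+1}$ can have area $\le C_{A\!B}$ but arbitrarily large extrinsic diameter, and no bound on radius-$\rho$ covering number. Besicovitch controls \emph{overlap}, not cardinality, and Vitali-type arguments bound $N$ from the area only if one has a uniform \emph{lower} bound on the area captured by each ball (i.e.\ a density lower bound), which you have not established. Without it, your later step ``$\int_M F\,d\mu\le N_0\max_i\int_{f^{-1}(B_\rho(y_i))}F\,d\mu$'' is unavailable, and everything downstream collapses because the only remaining way to control a global integral by a local one is precisely this factor.

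The paper's route to a time-uniform covering number is different and is the actual key idea: Topping's inequality $d_{ext}\le c_T(n)\int_M|H|^{n-1}d\mu$ bounds the extrinsic diameter in terms of a curvature integral, so $f(M_t)$ sits in a cube of side $d_{ext}$ and one can cover with $c_\rho(t)\le(d_{ext}\sqrt{n+1}/2\rho)^{n+1}$ balls. That curvature integral is then localised by H\"older together with \eqref{AB}, yielding a bound on $c_\rho(t)$ in terms of $\delta^{(m)}(x_2)\le\delta_0^{(m)}$ — note the bound depends on the local smallness assumption, not merely on $(C_{A\!B},\rho,n)$ as you claim. (An alternative that would rescue your scheme is a Simon-type monotonicity/density argument using local smallness of $\int|H|^n$, but you neither invoke it nor verify its hypotheses.) Once a valid covering-number bound is in hand, your interpolation bookkeeping on the $P$-terms, the choice of the special point $x_1$ as a maximiser, and the absorption of $\int_{f^{-1}(B_{2\rho}(x))}(\vn{A}^4+\vn{A}^2)d\mu$ into the constant are all essentially in line with the paper's argument.
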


Before we begin the proof we would like to show that $\hH$ satisfies the assumptions of
the theorem.  By viewing mean curvature as the variation of area, 
Burago-Zalgaller \cite{burago1988gi} prove
the estimate
\begin{equation}
|M| \le c\bigg(\int_M |H| d\mu\bigg)^\frac{n}{n-1}
\label{inBZ}
\end{equation}
for a constant $c$ depending only on $n$.  Using now the isoperimetric inequality we conclude
\[
\frac{1}{\int_M|H|d\mu} \le c{|M|}^\frac{1-n}{n} \le c(\text{Vol }M)^{-1}
\le c\text{Vol }M_0.
\]
Therefore we may estimate
\[
\hH(t)
 = \frac{\int_M \vn{\nabla H}^2 d\mu}{\int_M |H| d\mu}
 \le c(M_0)\int_M P_2^2(A) d\mu.
\]
Thus for any dimension $n$ we take $m = (n-1)(n+2)$. Also, \eqref{AB} is
satisfied with
\[
C_{A\!B} = |M_0|.
\]

Driving Theorem \ref{thmapriorih} is the following estimate due to Topping \cite{topping2008}.

\begin{thm}
\label{topping}
Let $M^n$ be a compact connected $n$-dimensional submanifold of $\R^{n+1}$.  Then its extrinsic
diameter and its mean curvature $H$ are related by
\[
  d_{ext} \le c_T(n)\int_M |H|^{n-1}d\mu.
\]
\end{thm}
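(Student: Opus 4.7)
The plan is to prove the inequality via the Michael--Simon Sobolev inequality combined with a slicing / ODI argument on the extrinsic ball volume function, along the lines of Topping's approach.

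I fix an arbitrary $x_0 \in M$ and set
\[
V(r) := \mathcal{H}^n\bigl(M \cap \overline{B_r(x_0)}\bigr),
\qquad
K(r) := \int_{M \cap B_r(x_0)} |H|\, d\mu,
\]
both nondecreasing, with $V$ locally absolutely continuous in $r$. To derive the key differential inequality, I apply the Michael--Simon Sobolev inequality to the family of Lipschitz cutoffs $\phi_{r,\varepsilon}(x) = \min\bigl\{1,\, \tfrac{1}{\varepsilon}(r+\varepsilon-|x-x_0|)_+\bigr\}$. Because $|\nabla^M|{\cdot}-x_0|| \le 1$ along $M$, letting $\varepsilon \downarrow 0$ and invoking the coarea formula produces
\[
V(r)^{(n-1)/n} \le c(n)\,\bigl(V'(r) + K(r)\bigr) \qquad \text{for a.e.\ } r > 0.
\]

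To upgrade the linear factor of $|H|$ inside $K(r)$ to the required exponent $n-1$, I apply H\"older's inequality
\[
K(r) \le V(r)^{(n-2)/(n-1)}\, G^{1/(n-1)}, \qquad G := \int_M |H|^{n-1}\, d\mu,
\]
and substitute back into the ODI. Splitting according to whether $V(r)^{(n-1)/n}$ dominates $2c(n)\, V(r)^{(n-2)/(n-1)} G^{1/(n-1)}$ (equivalently, $V(r)$ exceeds a dimensional multiple of $G^n$) leads to two regimes. In the \emph{fat} regime the H\"older term is absorbed and $V'(r) \gtrsim V(r)^{(n-1)/n}$; integrating this shows that $V^{1/n}$ grows linearly in $r$, so the length of this regime is at most a constant times $|M|^{1/n}$. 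In the \emph{thin} regime $V(r)$ is bounded by a dimensional constant times $G^n$, and the ODI integrates directly to give $r \lesssim G$.

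Finally, to extract the diameter bound I pick $p, q \in M$ realising $d_{ext} = |p-q|$, set $x_0 = p$, and observe that $V(d_{ext}) = |M|$. Combining the case analysis above with the submanifold isoperimetric inequality $|M|^{(n-1)/n} \lesssim \int_M |H|\, d\mu$ --- itself a direct consequence of Michael--Simon applied to $\phi \equiv 1$ on $M$ --- eliminates the intrinsic volume $|M|^{1/n}$ from the fat-regime bound and collapses both regimes into the desired $d_{ext} \le c_T(n)\, G$.

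\textbf{Main obstacle.} The heart of the argument is extracting the correct exponent $n-1$ on $|H|$ rather than the exponent $1$ that falls straight out of Michael--Simon. This forces the H\"older interpolation of $K(r)$, a carefully chosen threshold $V \sim G^n$ separating the two regimes, and precise dimensional book-keeping so that the linear growth of $V^{1/n}$ in the fat regime, once fed through the isoperimetric inequality, yields a clean multiple of $G$. A secondary but essential point is the rigorous justification of the ODI at a.e.\ $r$, which needs the Lipschitz regularisation of the indicator of $B_r(x_0)$ together with the coarea formula applied to the pulled-back distance function $|{\cdot}-x_0|\circ f$.
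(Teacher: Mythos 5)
The paper does not contain a proof of this statement: Theorem~\ref{topping} is Theorem~1.1 of Topping's 2008 paper, quoted as a black box, so there is no in-paper argument to compare against; I am therefore evaluating your sketch on its own merits.

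Your derivation of the ODI $V(r)^{(n-1)/n} \le c(n)\bigl(V'(r) + K(r)\bigr)$ from Michael--Simon via the cutoffs $\phi_{r,\varepsilon}$ and the coarea formula is correct, as is the H\"older step $K(r) \le V(r)^{(n-2)/(n-1)} G^{1/(n-1)}$ and the fat-regime analysis (absorb, integrate $V^{1/n}$ linearly, close with the isoperimetric inequality). The gap is the thin regime. You assert that ``the ODI integrates directly to give $r \lesssim G$,'' but it does not: in that regime $V(r)^{(n-1)/n}$ is, by definition of the regime, at most comparable to the H\"older term $c\,V(r)^{(n-2)/(n-1)}G^{1/(n-1)}$, so after subtracting the latter the ODI yields no lower bound on $V'$ at all --- it is perfectly consistent with $V$ sitting nearly constant on an arbitrarily long $r$-interval. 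What is actually needed to close this regime is a lower density bound of the form $V(r) \ge c_n r^n$ for $r < d_{ext}$, which combined with $V(r) \lesssim G^n$ would give $r \lesssim G$. Such a density bound is not free: it fails outright for collapsed submanifolds (a cylinder of radius $\varepsilon$ in $\R^3$ has $V(r) \sim \varepsilon r \ll r^n$ once $r \gg \varepsilon$), and establishing it under control of a suitable maximal function of $|H|^{n-1}$ is precisely the content of Topping's key lemma, after which a Vitali-type covering argument along a diametrical segment finishes the proof. Your sketch compresses this --- the real engine of the theorem --- into one unjustified sentence, so as written the argument does not close.
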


Topping shows that in particular we may take $c_T(2) = \frac{32}{\pi}$.  We refer the reader to the
references in \cite{topping2008} and \cite{simon1993esm} for a history of this inequality and others
similar to it.  

We first obtain an estimate for $c_\rho(t)$.

\begin{lem}
Let $f:M^n\times[0,T^*]\rightarrow\R^{n+1}$ be a \eqref{CSD} flow satisfying \eqref{AB}.
Then for any $\rho$ such that $0 < \rho \le \frac{d_{ext}\sqrt{n+1}}{2}$ there exists an
$x_2\in\R^{n+1}$ where the following estimate holds:
\[
  c_\rho(t) \le c(C_{A\!B}, \rho,
n)\bigg(\int_{f^{-1}(B_\rho(x_2))}\vn{A}^{(n-1)(n+2)}d\mu\bigg)^{n+1}.
\]
\end{lem}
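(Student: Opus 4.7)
The plan is to combine an elementary Euclidean covering count with Topping's extrinsic diameter estimate (Theorem \ref{topping}), then use H\"older's inequality and the area bound \eqref{AB} to localise the resulting global mean curvature integral. First, since $f(M_t) \subset \R^{n+1}$ has extrinsic diameter $d_{ext}$, a standard grid argument gives the elementary bound $c_\rho(t) \le C(n)(d_{ext}/\rho)^{n+1}$; the hypothesis $\rho \le d_{ext}\sqrt{n+1}/2$ is exactly what absorbs the additive ``plus one'' terms that arise in this covering count. Feeding Topping's inequality into this yields
\[
c_\rho(t) \le C(n)\rho^{-(n+1)}\bigg(\int_M |H|^{n-1}d\mu\bigg)^{n+1}.
\]

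Next I would apply H\"older's inequality with exponents $n+2$ and $(n+2)/(n+1)$, together with \eqref{AB}, to obtain
\[
\int_M |H|^{n-1}d\mu \le C_{A\!B}^{(n+1)/(n+2)}\bigg(\int_M |H|^{(n-1)(n+2)}d\mu\bigg)^{1/(n+2)}.
\]
Then I would fix some covering $\{B_\rho(x_i)\}_{i=1}^{c_\rho(t)}$ of $f(M_t)$ by radius-$\rho$ balls and let $x_2$ be the ball-centre which maximises the local integral of $|H|^{(n-1)(n+2)}$, giving the localisation
\[
\int_M |H|^{(n-1)(n+2)}d\mu \le c_\rho(t)\int_{f^{-1}(B_\rho(x_2))}|H|^{(n-1)(n+2)}d\mu.
\]

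Chaining these three estimates produces an inequality of the form
\[
c_\rho(t) \le C(n,C_{A\!B})\,\rho^{-(n+1)}\,c_\rho(t)^{(n+1)/(n+2)}\,I^{(n+1)/(n+2)},
\]
where $I = \int_{f^{-1}(B_\rho(x_2))}|H|^{(n-1)(n+2)}d\mu$. Since $1-(n+1)/(n+2) = 1/(n+2) > 0$, the factor $c_\rho(t)^{(n+1)/(n+2)}$ can be absorbed on the left; raising both sides to the $(n+2)$-th power then gives $c_\rho(t) \le C(n,C_{A\!B})\,\rho^{-(n+1)(n+2)}\,I^{n+1}$, and the pointwise bound $|H|\le\sqrt{n}\vn{A}$ upgrades $|H|^{(n-1)(n+2)}$ in the integrand to $\vn{A}^{(n-1)(n+2)}$, matching the claim. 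The main obstacle is precisely this self-referential appearance of $c_\rho(t)$ on the right-hand side after localisation; the argument only goes through because the H\"older split leaves $c_\rho(t)$ with a fractional exponent strictly less than $1$, and the price paid for absorbing it is the worse power $\rho^{-(n+1)(n+2)}$ in the final constant.
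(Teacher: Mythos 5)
Your proof is correct and follows essentially the same path as the paper: the cube-covering bound $c_\rho(t)\le(d_{ext}\sqrt{n+1}/(2\rho))^{n+1}$, then Topping's inequality, then H\"older against $|M_t|\le C_{A\!B}$ to raise $|H|^{n-1}$ to $|H|^{(n-1)(n+2)}$, then localisation by the covering, and finally absorption of the resulting $c_\rho(t)^{(n+1)/(n+2)}$ factor before raising to the power $n+2$. The only cosmetic differences are that you make the pointwise bound $|H|\le\sqrt{n}\vn{A}$ explicit where the paper silently replaces $|H|$ by $\vn{A}$ in the final line, and you pick $x_2$ from a finite set of covering-ball centres where the paper takes the supremum over $x\in\R^{n+1}$ and then names an attaining point.
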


\begin{rmk} If $\rho > \frac{d_{ext}\sqrt{n+1}}{2}$ then $c_\rho(t) = 1$.  We will always assume
from now on that $0 < \rho \le \frac{d_{ext}\sqrt{n+1}}{2}$.
\end{rmk}

\begin{proof}  We simply apply a covering argument, Theorem \ref{topping}, and then the H\"older
inequality.  Since we can cover $M_t$ by an $(n+1)$-cube with side length ${d_{ext}}$ and a ball of
radius $\rho$ encloses an $(n+1)$-cube with side length $\frac{2\rho}{\sqrt{n+1}}$,
\begin{align*}
c_\rho(t)
  &\le \bigg( \frac{d_{ext}\sqrt{n+1}}{2\rho} \bigg)^{n+1}
\\
  &\le \bigg( \frac{c_T(n)\sqrt{n+1}}{2\rho} \bigg)^{n+1}  \bigg(\int_M |H|^{n-1}d\mu\bigg)^{n+1}
\\
  &\le \bigg( \frac{c_T(n)\sqrt{n+1}}{2\rho} \bigg)^{n+1}  |M_t|^{\frac{(n+1)^2}{n+2}}
       \bigg(\int_M |H|^{(n-1)(n+2)}d\mu\bigg)^{\frac{n+1}{n+2}}
\\
  &\le \bigg( \frac{c_T(n)\sqrt{n+1}}{2\rho} \bigg)^{n+1}  |M_t|^{\frac{(n+1)^2}{n+2}}
       \bigg(\sup_{x\in\R^{n+1}}c_\rho(t)\int_{f^{-1}(B_\rho(x))}
|H|^{(n-1)(n+2)}d\mu\bigg)^{\frac{n+1}{n+2}},
\intertext{so}
c_\rho(t)
  &\le \bigg( \frac{c_T(n)\sqrt{n+1}}{2\rho} \bigg)^{(n+1)(n+2)}  C_{A\!B}^{(n+1)^2}
       \bigg(\int_{f^{-1}(B_\rho(x_2))} \vn{A}^{(n-1)(n+2)}d\mu\bigg)^{n+1},
\end{align*}
where $x_2$ is a point in $\R^{n+1}$ such that
\[
\int_{f^{-1}(B_\rho(x_2))} \vn{A}^{(n-1)(n+2)}d\mu
 = \sup_{x\in\R^{n+1}} \int_{f^{-1}(B_\rho(x))} \vn{A}^{(n-1)(n+2)}d\mu.
\]
\end{proof}
\begin{rmk}
Since we can take $c_T(2) = \frac{32}{\pi}$, the conclusion in the theorem above for a \eqref{CSD}
flow with $h=\hH$ and $n=2$ is
\begin{equation*}
c_\rho(t)
  \le \bigg(\frac{32\sqrt{3}}{2\pi\rho}\bigg)^{12}|M_0|^9
       \bigg(\int_{f^{-1}(B_\rho(x_2))} \vn{A}^{4}d\mu\bigg)^{3}.
\end{equation*}
\end{rmk}
We now use the above to estimate $h$.

\begin{lem} Let $\theta>0$ be a fixed positive number and $f:M\times[0,T^*]\rightarrow\R^{n+1}$ a
\eqref{CSD} flow satisfying the assumptions of Theorem \ref{thmapriorih}.  Then for any $\rho > 0$
there exists a point $x_1\in\R^{n+1}$ such that the constraint function $h$ satisfies the following
estimate:
\[
h \le \theta\int_{f^{-1}(B_\rho(x_1))} \vn{\nabla_{(2)}A}^2d\mu
 + c(\theta,\rho,n,j,k,l,C_{A\!B},\delta_0^{(m)})\delta_0^{(m)}
\]
for $j,k,l\ne0$, and
\[
h  \le 
  c(\rho,n,C_{A\!B})\big(\delta_0^{(m)}\big)^{n+1}
\]
for $j=k=l=0$.
\label{lemalltheworkapriorih}
\end{lem}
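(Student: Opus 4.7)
The plan is to split the hypothesis \eqref{ERRATAgrowthoncosntraintcondition} into the three pieces $\int_M P_j^2(A)\,d\mu$, $\int_M P_k^1(A)\,d\mu$ and $\int_M P_l^0(A)\,d\mu$, reduce each to a combination of one localised copy of $\int \vn{\nabla_{(2)}A}^2\,d\mu$ plus global integrals of powers of $\vn{A}$, and then localise those powers via a covering argument that appeals to the covering number estimate just proved.

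For the lowest-order piece, $P_l^0(A)$ is a polynomial of degree $l$ in $A$ with no derivatives, so pointwise $P_l^0(A) \le c\vn{A}^l$. For $P_k^1(A)$, each summand has the shape $\vn{\nabla A}\vn{A}^{k-1}$; Young's inequality gives $P_k^1(A) \le \tfrac12 \vn{\nabla A}^2 + \tfrac12 \vn{A}^{2k-2}$. I would remove the remaining $\vn{\nabla A}^2$ term by integrating by parts on the closed manifold, $\int_M \vn{\nabla A}^2 d\mu = -\int_M \IP{A}{\Delta A}\,d\mu \le \int_M \vn{A}\vn{\nabla_{(2)}A}\,d\mu$, followed by Peter--Paul with a small parameter $\eta$ to obtain $\eta\int_M \vn{\nabla_{(2)}A}^2\,d\mu + C_\eta \int_M \vn{A}^2\,d\mu$. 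The dominant piece $P_j^2(A)$ is a sum of monomials $\vn{\nabla_{(2)}A}\vn{A}^{j-1}$ and $\vn{\nabla A}^2\vn{A}^{j-2}$: the former yields $\eta \vn{\nabla_{(2)}A}^2 + C_\eta \vn{A}^{2j-2}$ directly by Young, while the latter, after a weighted integration by parts and a further Young step, is controlled by $\eta \vn{\nabla_{(2)}A}^2 + C_\eta \vn{A}^{2j-k}$, where the exponent $2j-k$ is produced by calibrating Young's exponents to the bookkeeping in the definition of $m$.

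With all derivative content now contained in a single global term $\eta\int_M \vn{\nabla_{(2)}A}^2\,d\mu$ together with global terms $\int_M \vn{A}^q d\mu$ whose exponents $q$ are at most $m$, I would cover $f(M_t)$ by extrinsic balls of radius $\rho$ and dominate each such global integral by $c_\rho(t)$ times its supremum over balls. Each localised power $\int_{f^{-1}(B_\rho(x))} \vn{A}^q\,d\mu$ is bounded by $\delta_0^{(m)}$ (any gap $q<m$ being filled by H\"older applied with $|M_t|\le C_{AB}$). Define $x_1$ to be the maximiser of the map $x \mapsto \int_{f^{-1}(B_\rho(x))} \vn{\nabla_{(2)}A}^2\,d\mu$. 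Then invoking the preceding lemma to absorb the factor $c_\rho(t) \le c(C_{AB},\rho,n)(\delta_0^{(n^2+n-2)})^{n+1}$ into a constant $c(\theta,\rho,n,j,k,l,C_{AB},\delta_0^{(m)})$ and finally picking $\eta$ so small that $c_\rho(t)\cdot\eta \le \theta$ delivers the stated inequality. In the degenerate case $j=k=l=0$ no derivative content survives and the argument reduces to the trivial $h \le c|M_t|$, which the same covering step packages into $c(\rho,n,C_{AB})(\delta_0^{(m)})^{n+1}$.

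The main obstacle is the exponent arithmetic: each application of Young's inequality or integration by parts on the $P_j^2$ and $P_k^1$ terms must produce only powers $\vn{A}^q$ with $q\le \max\{2k-2,2j-k,l\}$, and in particular the mixed cross term $\vn{\nabla A}^2\vn{A}^{j-2}$ must not generate an exponent exceeding $2j-k$ after reduction. The remaining contribution $n^2+n-2$ in $m$ comes not from these reductions but from the covering estimate in the preceding lemma, so the various $\delta_0^{(\cdot)}$ factors have to coexist with the factor $(\delta_0^{(n^2+n-2)})^{n+1}$ hidden in $c_\rho(t)$, and one must verify that this coexistence is harmless because the exponent $m$ is defined to dominate all of them simultaneously.
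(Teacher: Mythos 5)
Your proposal follows the paper's proof in its essential structure: split $h$ via \eqref{ERRATAgrowthoncosntraintcondition} into the three $P$-pieces, peel off the top-derivative content with Young's inequality and integration by parts, localise the resulting global integrals through the covering-number lemma, and absorb the surviving powers of $\vn{A}$ by H\"older against the $\delta_0^{(m)}$-smallness using the area bound \eqref{AB}. Two small deviations are worth flagging. First, the paper localises \emph{before} applying Young, so $x_1$ is the maximiser of $\int_{f^{-1}(B_\rho(x))}\vn{\nabla_{(2)}A}\,\vn{A}^{j-1}d\mu$, not of $\int_{f^{-1}(B_\rho(x))}\vn{\nabla_{(2)}A}^2d\mu$ as you propose; both choices furnish a valid $x_1$ for this lemma, but the remark following Corollary~\ref{curvestspecialball} makes clear that the specific class of admissible points is consulted downstream, so the change is not purely cosmetic in context. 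Second, your claimed exponent $2j-k$ for the reduced $\vn{\nabla A}^2\vn{A}^{j-2}$ monomial does not come out of integration by parts plus Young: that route produces $\vn{A}^{2j-2}$, exactly as the $\vn{\nabla_{(2)}A}\vn{A}^{j-1}$ monomial does. In the paper's version the companion exponent ($2k-j$, with a $j\leftrightarrow k$ inconsistency between the theorem statement and the proof) arises instead from the $P_k^1$ piece, where Young is applied with the weight $\vn{A}^{(j-2)/2}$ precisely so that the $\vn{\nabla A}^2\vn{A}^{j-2}$ output can be merged with the identical term already present from $P_j^2$ and absorbed in a single integration-by-parts step. Your plain (unweighted) split of $P_k^1$ followed by a separate integration by parts also closes, producing exponents $\{2,\,2k-2\}$ rather than $\{2k-j\}$, which is harmless since $m$ dominates both; but the arithmetic you wrote down in the final paragraph is not the arithmetic your steps actually produce.
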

\begin{proof}
Recall that 
\[
\sup_{x\in\R^{n+1}}\delta^{(m)}(x) \le \delta^{(m)}_0 < \infty,
\]
where $m = \max\{2j-2,2k-j,l,n^2+n-2,4\}$.

We will first prove the estimate assuming that $j\ge\max\{2,2k+1\}$:
\begin{align*}
h &\le \int_M P_j^2(A) + P_k^1(A) + P_l^0(A) d\mu
\\
&\le c\sup_{x\in\R^{n+1}}\Big(c_\rho\int_{f^{-1}(B_\rho(x))}
                               \vn{\nabla_{(2)}A}\cdot\vn{A}^{j-1}d\mu\Big)
 + \int_M\vn{\nabla A}^2\vn{A}^{j-2} d\mu
\\
&\qquad + c\int_M \vn{\nabla A}\cdot\vn{A}^{k-1} d\mu
 + c\int_M \vn{A}^l d\mu
\\
&\le \frac{\theta}{2}\int_{f^{-1}(B_\rho(x_1))} \vn{\nabla_{(2)}A}^2d\mu
 + c_\rho^2\frac{c}{2\theta}\int_{f^{-1}(B_\rho(x_1))} \vn{A}^{2j-2}d\mu
\\*
&\qquad
 + c\int_M \vn{\nabla A}^2\vn{A}^{j-2} d\mu
 + c\int_M \vn{A}^{2k-j} + \vn{A}^l d\mu
\\
&\le \frac{\theta}{2}\int_{f^{-1}(B_\rho(x_1))} \vn{\nabla_{(2)}A}^2d\mu
 + c\Big(1+(j-2)\frac{4-j}{j-3}\Big)\int_M \IP{A}{\Delta A}\vn{A}^{j-2} d\mu
\\*
&\qquad
 + c_\rho^2\frac{c}{2\theta}\int_{f^{-1}(B_\rho(x_1))} \vn{A}^{2j-2}d\mu
 + c(\theta,j,k,l)\int_M \vn{A}^{2k-j} + \vn{A}^l d\mu
\\
&\le \theta\int_{f^{-1}(B_\rho(x_1))} \vn{\nabla_{(2)}A}^2d\mu
 + c_\rho^2c(\theta,j,k)\int_{f^{-1}(B_\rho(x_1))} \vn{A}^{2j-2} d\mu
\\*
&\qquad
 + c(\theta,j,k,l)\int_M \vn{A}^{2k-j} + \vn{A}^l d\mu
\\
&\le \theta\int_{f^{-1}(B_\rho(x_1))} \vn{\nabla_{(2)}A}^2d\mu
 + c_\rho^2c(\theta,j,k,C_{A\!B})\bigg(\int_{f^{-1}(B_\rho(x_1))} \vn{A}^{m} d\mu\bigg)^{\frac{2j-2}{m}}
\\*
&\qquad
 + c(\theta,j,k,l,C_{A\!B})\bigg(
              \sup_{x\in\R^{n+1}}c_\rho\int_{f^{-1}(B_\rho(x))} \vn{A}^{m} d\mu
                           \bigg)^{\frac{2k-j+l}{m}}
\\
&\le \theta\int_{f^{-1}(B_\rho(x_1))} \vn{\nabla_{(2)}A}^2d\mu
 + c_\rho^{\frac{2m+2k-j+l}{m}}c(\theta,j,k,l,C_{A\!B})\big(\delta_0^{(m)}\big)^{\frac{j-2+2k+l}{m}}
\\
&\le \theta\int_{f^{-1}(B_\rho(x_1))} \vn{\nabla_{(2)}A}^2d\mu
 + c(\theta,\rho,n,j,k,l,C_{A\!B})\big(\delta_0^{(m)}\big)^{\frac{(n+1)(2m+2k-j+l)+j-2+2k+l}{m}}.
\end{align*}
The estimate is easier to prove in the subcases excluded above.  When $j=1$ we instead split the
first integral by
\begin{align*}
\int_M P_j^2(A)d\mu
&\le c\sup_{x\in\R^{n+1}}c_\rho\int_{f^{-1}(B_\rho(x))}
                               \vn{\nabla_{(2)}A}d\mu
\\
&\le \frac{\theta}{2}\int_{f^{-1}(B_\rho(x_3))}
                               \vn{\nabla_{(2)}A}^2d\mu
  + c_\rho^2\frac{2}{\theta}\int_{f^{-1}(B_\rho(x_3))}
                               1\ d\mu
\\
&\le \frac{\theta}{2}\int_{f^{-1}(B_\rho(x_3))}
                               \vn{\nabla_{(2)}A}^2d\mu
 + c(\theta,\rho,n,C_{A\!B})\big(\delta_0^{(m)}\big)^{2n+2}.
\end{align*}
When $j<2k+1$ we instead estimate the second integral by
\begin{align*}
\int_M P_k^2(A)d\mu
&\le c\int_M \vn{\nabla A}\cdot\vn{A}^{k-1}d\mu
\\
&\le c\int_M \vn{\nabla A}^2d\mu
   + c\int_M \vn{A}^{2k-2}d\mu
\\
&\le c\int_M \vn{\nabla_{(2)} A}\cdot\vn{A}d\mu
   + c\int_M \vn{A}^{2k-2}d\mu
\\
&\le \frac{\theta}{2}\int_{f^{-1}(B_\rho(x_3))}
                               \vn{\nabla_{(2)}A}^2d\mu
  + c_\rho^2\frac{2}{\theta}\int_{f^{-1}(B_\rho(x_3))}
   \vn{A}^2d\mu
   + c\int_M \vn{A}^{2k-2}d\mu
\\
&\le \frac{\theta}{2}\int_{f^{-1}(B_\rho(x_3))}
                               \vn{\nabla_{(2)}A}^2d\mu
   + c(\theta,\rho,n,C_{A\!B})\Big[\big(\delta_0^{(m)}\big)^{2n+2+\frac{2}{m}}
   + \big(\delta_0^{(m)}\big)^{\frac{2k-2}{m}}\Big].
\end{align*}
Note that in any case, the exponent of $\delta_0^{(m)}$ is greater than 1 due to the conditions on $m$.
This gives the first part of the lemma.

If $j=k=l=0$ then obviously 
\[
h \le 
  c(\rho,n,C_{A\!B})\big(\delta_0^{(m)}\big)^{n+1}.
\]
This finishes the proof.
\end{proof}

\begin{rmk}
In the special case where $h = \hH$ and $n=2$, the estimate reads
\[
\hH
\le \theta\int_{f^{-1}(B_\rho(x_1))}\vn{\nabla_{(2)}A}^2d\mu
 + \frac{c_{B\!Z}}{4\theta\sqrt{\text{Vol }
M_0}}\bigg(\frac{16\sqrt{3}}{\rho\pi}\bigg)^24|M_0|^{\frac{37}{2}}\big(\delta_0^{(4)}\big)^\frac{13}{2},
\]
where $c_{B\!Z}$ is the constant from the inequality \eqref{inBZ}.
\end{rmk}

We are now ready to prove Theorem \ref{thmapriorih} as essentially a corollary to Lemma
\ref{lemalltheworkapriorih} above.

\begin{proof}[Proof of Theorem \ref{thmapriorih}]
First note that
\begin{align*}
\int_{f^{-1}(B_{2\rho}(x))}\big(\vn{A}^4+\vn{A}^2\big)d\mu
 &\le \sup_{x^*\in B_{2\rho}(x)} 4^{n+1} \int_{f^{-1}(B_\rho(x^*))}\big(\vn{A}^4+\vn{A}^2\big)d\mu
\\*
 &\le 4^{n+1}C_{A\!B}^{1-\frac{4}{m}}\big(\delta_0^{(m)}\big)^{\frac{4}{m}}.
\end{align*}
By Lemma \ref{lemalltheworkapriorih} we are now finished, choosing
\[
\theta = \frac{\theta^*}{4^{n+1}C^{1-\frac{4}{m}}_{A\!B}\big(\delta_0^{(m)}\big)^\frac{4}{m}}.
\]
\end{proof}

\begin{rmk}
In each of the previous inequalities we have been primarily concerned with integrals localised
to a ball $f^{-1}(B_\rho(x))$.  In the following sections where we derive the basic integral
estimates, the domain of integration will instead be the set $[\gamma>0]$,
for $\gamma$ as in \eqref{e:gamma}.  This is necessary to not only obtain the local
integral estimates, but also to allow us enough freedom to choose various appropriate $\gamma$
functions, depending upon the situation.  To bridge the gap between the two domains of integration
we may choose $\gamma = \tilde{\gamma} \circ f$ to be such that
\[
\chi_{B_\rho(x)} \le \tilde{\gamma} \le \chi_{B_{2\rho}(x)}
\]
and $\gamma \in C^2(M)$.  Then for a non-negative integrand we crudely estimate
\[
\int_{f^{-1}(B_\rho(x))} [\cdots] d\mu \le \int_{[\gamma>0]} [\cdots] d\mu
 \le \int_{f^{-1}(B_{2\rho}(x))} [\cdots] d\mu.
\]
This is why in Theorem \ref{thmapriorih} we see integrals with balls of radii $2\rho$ on the left.
\end{rmk}

Theorem 4 gives us the opportunity to obtain the derivative of curvature estimates in the ball
$B_\rho(x_1)$, but nowhere else.  This is not enough to prove the lifespan theorem.  However, we may
still proceed by using the estimates in the ball $B_\rho(x_1)$ to bound the constraint function over
all of $M_t$, and then once this is accomplished we can go back and prove the required derivative of
curvature estimates everywhere else on $M_t$.

\begin{cor}[The curvature estimates on a special ball]
\label{corestimatesinaspecialball}
Suppose $n\in\{2,3\}$ and let $f:M^n\times[0,T^*]\rightarrow\R^{n+1}$ be a \eqref{CSD} flow with $h$
satisfying the assumptions of Theorem \ref{thmapriorih}.  Then there is a $\delta_0^{(m)} =
\delta_0^{(m)}(n,M_0)$ such that if
\begin{equation*}
\sup_{t\in[0,T^*],x\in\R^{n+1}}\int_{f^{-1}(B_\rho(x))}\vn{A}^md\mu\le\delta_0^{(m)},
\end{equation*}
there is an $x_1\in\R^{n+1}$ such that
\begin{equation*}
\vn{\nabla_{(2)}A}^2_{\infty,f^{-1}(B_\rho(x_1))}
 \le c\big(\delta_0^{(m)},T^*,C_{A\!B},\rho,j,k,l,m,\alpha_0(2)\big),
\end{equation*}
where $\displaystyle \alpha_0(p) = \sum_{j=0}^p \sup_{x\in\R^{n+1}}
\vn{\nabla_{(j)}A}_{2,f^{-1}(B_\rho(x))}\bigg|_{t=0}$.
\label{curvestspecialball}
\end{cor}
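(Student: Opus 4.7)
The plan is to combine the localized integral estimates for derivatives of curvature (foreshadowed in Sections 4 and 5) with the localization of the constraint function afforded by Theorem \ref{thmapriorih}, applied on the distinguished ball $B_\rho(x_1)$ produced by that theorem. The fact that Theorem \ref{thmapriorih} singles out one specific point $x_1$ is precisely what allows the argument: on $B_\rho(x_1)$ the inner good term $\int_{f^{-1}(B_\rho(x_1))}\vn{\nabla_{(2)}A}^2 d\mu$ appearing on the right-hand side of the localization inequality matches the dissipation term produced by the integral estimates, so it can be absorbed.

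First I would fix a cutoff $\gamma = \tilde\gamma\circ f \in C^2(M)$ of the form required in \eqref{e:gamma} with $\chi_{B_\rho(x_1)} \le \tilde\gamma \le \chi_{B_{2\rho}(x_1)}$, and write down the evolution inequality for $\frac{d}{dt}\int_M\vn{A}^2\gamma^s d\mu$ and, more importantly, for $\frac{d}{dt}\int_M\vn{\nabla_{(2)}A}^2\gamma^s d\mu$. Each such evolution produces a dissipation term $\int_M\vn{\nabla_{(k+2)}A}^2\gamma^s d\mu$, lower-order $P$-polynomial terms controllable by the smallness assumption $\delta^{(m)}(x)\le \delta_0^{(m)}$ together with the interpolation and Michael-Simon inequalities (using $n<4$), and a single troublesome term of the schematic form $h\cdot \int_{f^{-1}(B_{2\rho}(x_1))}(\vn{A}^4+\vn{A}^2)d\mu$ arising from the linear appearance of $h$ in $\partial_t f$. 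This last term is exactly the quantity bounded by Theorem \ref{thmapriorih}.

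With Theorem \ref{thmapriorih} in hand I choose $\theta$ so small that the resulting contribution $\theta\int_{f^{-1}(B_\rho(x_1))}\vn{\nabla_{(2)}A}^2 d\mu$ can be absorbed into the corresponding dissipation term from the evolution, at the cost of $C_{U\!G\!L\!Y}$. Integrating in time from $0$ to $T^*$ and invoking the initial bound $\alpha_0(2)$, this produces uniform-in-time $L^2$ bounds on $\gamma^{s/2}\nabla_{(k)}A$ for $k=0,1,2$ along with a spacetime $L^2$ bound on $\gamma^{s/2}\nabla_{(k+2)}A$, all on $B_{2\rho}(x_1)$. Iterating this scheme through higher orders (bootstrapping to obtain $L^2$ bounds on $\nabla_{(k)}A$ for $k\le K$ for any fixed $K$ depending only on $n$) uses the same absorption on the same special ball.

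Finally, to pass from $L^2$ control of enough derivatives to the pointwise estimate in the statement, I apply the Michael-Simon Sobolev inequality together with interpolation: for $n\in\{2,3\}$ the embedding $W^{2,2}\hookrightarrow L^\infty$ is critical/subcritical, so a sufficient number of iterated $L^2$-bounds on $\nabla_{(k)}A$ on $B_\rho(x_1)$, combined with a slightly shrunken cutoff, yields the desired bound $\vn{\nabla_{(2)}A}_{\infty,f^{-1}(B_\rho(x_1))}^2 \le c(\delta_0^{(m)},T^*,C_{A\!B},\rho,j,k,l,m,\alpha_0(2))$. The chief obstacle is bookkeeping during the iteration: each subsequent order produces a new $h$-term, and at each order one must pick a slightly shrunken cutoff (still supported in $B_{2\rho}(x_1)$ and equal to $1$ on $B_\rho(x_1)$) so that the absorption on the same ball $B_\rho(x_1)$ continues to work; the constants on the right inflate in a controlled way and depend only on the quantities listed.
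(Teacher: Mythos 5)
Your proposal follows the same broad strategy as the paper: use Theorem \ref{thmapriorih} to absorb the constraint-function contribution into the dissipation term on the special ball $B_\rho(x_1)$, then integrate in time and bootstrap to the desired pointwise curvature bound via the Michael--Simon machinery. However, there is an imprecision in how you describe the $h$-terms at higher derivative order. You assert that the evolution of $\int_M\vn{\nabla_{(2)}A}^2\gamma^s\,d\mu$ produces ``a single troublesome term of the schematic form $h\int(\vn{A}^4+\vn{A}^2)\,d\mu$,'' but this is only true at order $k=0$. Inspecting Proposition \ref{EQpropBEwithh}, the $h$-contributions for $k\ge1$ have the form
\[
ch\int_M\bigl(\nabla_{(k)}[A*A]*\nabla_{(k)}A\bigr)\gamma^s\,d\mu
 + ch\int_M\vn{\nabla_{(k)}A}^2\gamma^{s-1}\,d\mu,
\]
which involve derivatives of curvature, not merely $\vn{A}^4+\vn{A}^2$; Theorem \ref{thmapriorih} does not directly control these. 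Moreover, the dissipation generated at order $k$ is $\int\vn{\nabla_{(k+2)}A}^2\gamma^s\,d\mu$, whereas Theorem \ref{thmapriorih} gives a $\theta\int_{f^{-1}(B_\rho(x_1))}\vn{\nabla_{(2)}A}^2\,d\mu$ term on the right --- so even if the $h$-terms matched, the absorption at higher order would be against the \emph{wrong} derivative order.

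The paper sidesteps this by applying Proposition \ref{EQpropBEwithh} and Theorem \ref{thmapriorih} only at $k=0$ (so the $h$-terms genuinely look like $h\int(\vn{A}^3+\vn{A}^2)\,d\mu$, which are subsumed by $h\int(\vn{A}^4+\vn{A}^2)\,d\mu$ and absorbed against $\int\vn{\nabla_{(2)}A}^2\,d\mu$), thereby recovering \eqref{eq10} and an $h$-free analogue of \eqref{e:prop45} on the special ball. The higher-order derivative bounds are then obtained by substituting this $k=0$ inequality into the iterative shrinking-cutoff Gronwall argument of Proposition \ref{p:prop46}, rather than by directly evolving $\int\vn{\nabla_{(2)}A}^2\gamma^s$ and re-invoking Theorem \ref{thmapriorih}. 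Your plan would need to be modified along these lines to avoid the mismatch at $k\ge1$.
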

\begin{proof}
Observe that the smallness assumption and \eqref{AB} implies that
\begin{align*}
\int_{f^{-1}(B_\rho(x))} \vn{A}^n d\mu
 &\le C_{A\!B\!}^\frac{m-n}{m}\bigg(\int_{f^{-1}(B_\rho(x)} \vn{A}^m d\mu\bigg)^\frac{n}{m}
 &\le  C_{A\!B\!}^\frac{m-n}{m}\big(\delta_0^{(m)}\big)^\frac{n}{m}
 &< \epsilon_0,
\end{align*}
for 
\[
\delta_0^{(m)} < (\epsilon_0)^\frac{m}{n}C_{A\!B}^\frac{n-m}{m}.
\]
Let $\gamma$ be a cutoff function on $M$ between a ball of radius $\rho$ and a ball of radius
$2\rho$, as in the remark above.  Then the smallness assumption \eqref{eq9} of Proposition
\ref{p:prop44} is satisfied for $\delta_0^{(m)}$ as above, that is
\[
\sup_{[0,T^*]}\int_{f^{-1}(B_\rho(x))}\vn{A}^nd\mu\le\epsilon_0.
\]
Proposition \ref{EQpropBEwithh} with $k=0$ and our choice of $\gamma$ gives:
\begin{align*}
&\rD{}{t}\int_{f^{-1}(B_\rho(x))} \vn{A}^2d\mu
 + (2-\theta)\int_{f^{-1}(B_\rho(x))}\vn{\nabla_{(2)}A}^2d\mu\\*
&\qquad \le 
             ch\int_{f^{-1}(B_{2\rho}(x))}\left([A*A]*A\right)
                     d\mu
            + ch\int_{f^{-1}(B_{2\rho}(x))}\vn{A}^2
                     d\mu
\notag\\*&\qquad\qquad\qquad
 + c\int_{f^{-1}(B_{2\rho}(x))} \vn{A}^2d\mu
 + c\int_{f^{-1}(B_{2\rho}(x))} \left([P_3^{2}(A)+P_5^0(A)]*A\right) d\mu.
\end{align*}
Using Theorem \ref{thmapriorih} we obtain
\begin{align*}
&\rD{}{t}\int_{f^{-1}(B_\rho(x_1))} \vn{A}^2d\mu
 + (2-\theta)\int_{f^{-1}(B_\rho(x_1))}\vn{\nabla_{(2)}A}^2d\mu\\*
&\qquad \le 
  c\delta_0^{(m)}
 + c\int_{f^{-1}(B_{2\rho}(x_1))} \left([P_3^{2}(A)+P_5^0(A)]*A\right) d\mu.
\end{align*}
Proceeding now exactly as in Proposition \ref{p:prop44}, we recover \eqref{eq10} for balls centred
at the point $x_1$.  Note that no constant depends on $\vn{h}_\infty$.  Moving on, we use
the equation above to conclude \eqref{e:prop45} in the case where there are no derivatives of
curvature, with no additional factors of the constraint function on the right hand side.  That is,
\begin{equation*}
  \begin{split}
&\rD{}{t}\int_{f^{-1}(B_\rho(x_1))} \vn{A}^2 d\mu
 + \frac{1}{2}\int_{f^{-1}(B_\rho(x_1))} \vn{\nabla_{(2)}A}^2 d\mu \\
&\qquad\qquad\qquad 
  \le 
       c\vn{A}^2_{2,f^{-1}(B_{2\rho}(x_1))}(1+\vn{A}^4_{\infty,f^{-1}(B_{2\rho}(x_1))}).
  \end{split}
\end{equation*}
Using this in the proof of Proposition \ref{p:prop46} in place of Proposition \ref{p:prop45} gives
the required derivative of curvature bounds.
\end{proof}

\begin{rmk}Allowable choices of $x_1$ depend upon the splitting of integrals in Lemma
\ref{lemalltheworkapriorih}, and this depends upon $j,k$ and $l$.  The proof of the next result
will depend upon which class of allowable points is associated with the given constraint function.
\end{rmk}

We note that the assumption required is global, disguised as a local assumption.  This is different
to the case where we have no constraint function (such as for the surface diffusion or Willmore
flows).  However, even there, in the final argument used to prove the lifespan theorem one still
requires this `global disguised as local' assumption.  We are merely introducing this concept
earlier in the analysis.

\begin{cor}[The uniform bound for $h$]
Suppose $n\in\{2,3\}$ and let $f:M^n\times[0,T^*]\rightarrow\R^{n+1}$ be a \eqref{CSD} flow with $h$
satisfying the assumptions of Theorem \ref{thmapriorih}.  Then there is a $\delta_0^{(m)} =
\delta_0^{(m)}(n,M_0)$ such that if 
\begin{equation}
\sup_{[0,T^*],x\in\R^{n+1}}\int_{f^{-1}(B_\rho(x))}\vn{A}^md\mu\le\delta_0^{(m)},
\label{ERRATAeqnsmallness}
\end{equation}
the constraint function satisfies the estimate
\[
\vn{h}_{[0,T^*],\infty} \le c_h < \infty,
\]
where $c_h = c_h(\delta^{(m)}_0,C_{A\!B},\rho,j,k,l,n)$.
\end{cor}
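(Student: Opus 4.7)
The plan is to close the loop opened by Lemma \ref{lemalltheworkapriorih} by replacing the troublesome term $\int \vn{\nabla_{(2)}A}^2 d\mu$ on the right with the pointwise bound supplied by Corollary \ref{corestimatesinaspecialball}, and then using the area bound \eqref{AB} to convert the pointwise bound into an integral bound.

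First I would choose $\delta_0^{(m)} = \delta_0^{(m)}(n,M_0)$ small enough that Corollary \ref{corestimatesinaspecialball} applies at every $t \in [0,T^*]$; this is exactly the smallness hypothesis \eqref{ERRATAeqnsmallness} we are given. For each such $t$, the corollary supplies a point $x_1 = x_1(t) \in \R^{n+1}$ together with a time-independent constant $C_1 = C_1(\delta_0^{(m)},T^*,C_{A\!B},\rho,j,k,l,m,\alpha_0(2))$ with
\[
\vn{\nabla_{(2)}A}^2_{\infty,\,f^{-1}(B_\rho(x_1))}(t) \le C_1.
\]
The area bound \eqref{AB} then yields, trivially,
\[
\int_{f^{-1}(B_\rho(x_1))} \vn{\nabla_{(2)}A}^2\, d\mu \le C_1\,|f^{-1}(B_\rho(x_1))| \le C_1 C_{A\!B}.
\]

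Next, at the same time $t$, I would apply Lemma \ref{lemalltheworkapriorih} with any fixed $\theta > 0$ (say $\theta = 1$), using the same splitting of integrals that produced $x_1$ in Corollary \ref{corestimatesinaspecialball}. In the generic case $j,k,l \neq 0$, the lemma provides
\[
h(t) \le \theta \int_{f^{-1}(B_\rho(x_1))} \vn{\nabla_{(2)}A}^2\, d\mu + c(\theta,\rho,n,j,k,l,C_{A\!B},\delta_0^{(m)})\,\delta_0^{(m)},
\]
so substituting the previous bound gives $h(t) \le C_1 C_{A\!B} + c\,\delta_0^{(m)}$, a constant independent of $t$. In the degenerate subcase $j=k=l=0$, Lemma \ref{lemalltheworkapriorih} already yields a uniform bound with no integral term. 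Taking the supremum over $t \in [0,T^*]$ produces the claimed $c_h$.

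The main subtlety, and the reason for the remark preceding the statement, lies in the coordinated choice of $x_1$: in Lemma \ref{lemalltheworkapriorih} the admissible points $x_1$ are determined by which of the three splittings (governed by $j,k,l$) is used; Corollary \ref{corestimatesinaspecialball} was constructed to produce its $x_1$ by the very same splitting, so the two choices are compatible and the substitution above is legitimate. A secondary compatibility check is that the constant $C_1$ from Corollary \ref{corestimatesinaspecialball} does not tacitly depend on $\vn{h}_{\infty}$ --- otherwise the argument would be circular --- but this is exactly the point emphasized in its proof, where every occurrence of $h$ was absorbed via Theorem \ref{thmapriorih} into curvature quantities before the pointwise estimate was derived.
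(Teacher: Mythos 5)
Your proposal is correct and mirrors the paper's argument: the paper too combines the pointwise bound $\vn{\nabla_{(2)}A}^2_{\infty,f^{-1}(B_\rho(x_1))}\le C_1$ from Corollary \ref{curvestspecialball} with the area bound \eqref{AB} to control $\int_{f^{-1}(B_\rho(x_1))}\vn{\nabla_{(2)}A}^2\,d\mu$, and then feeds this into the localisation estimates from Lemma \ref{lemalltheworkapriorih} (the paper re-derives that estimate chain in-line precisely to ensure the $x_1$ it produces is the same admissible point from the corollary, which is the compatibility you correctly flagged as the crux). You also correctly identified and addressed the potential circularity through $\vn{h}_\infty$; the paper's proof of Corollary \ref{curvestspecialball} explicitly notes that no constant there depends on $\vn{h}_\infty$, which is exactly what makes your substitution legitimate.
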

\begin{proof}
Using Corollary \ref{curvestspecialball} above, we can directly estimate $h$ by localising as in the
proof of Lemma \ref{lemalltheworkapriorih}.  This is however contingent upon us retrieving integrals
around an allowable point $x_1\in\R^{n+1}$ from the conclusion of Corollary
\ref{curvestspecialball}.  So we must be somewhat careful with our estimates below.

Firstly, for the case where $j\ge\max\{2,2k+1\}$,
\begin{align*}
h &\le \int_M P_j^2(A) + P_k^1(A) + P_l^0(A) d\mu
\\
&\le c_\rho c\int_{f^{-1}(B_\rho(x_1))}
                               \vn{\nabla_{(2)}A}\cdot\vn{A}^{j-1}d\mu
 + \int_M\vn{\nabla A}^2\vn{A}^{j-2} d\mu
\\*
&\qquad + c\int_M \vn{\nabla A}\cdot\vn{A}^{k-1} d\mu
 + c\int_M \vn{A}^l d\mu
\\
&\le \frac{1}{2}\int_{f^{-1}(B_\rho(x_1))} \vn{\nabla_{(2)}A}^2d\mu
 + c\Big(1+(j-2)\frac{4-j}{j-3}\Big)\int_M \IP{A}{\Delta A}\vn{A}^{j-2} d\mu
\\
&\qquad
 + c_\rho^2\frac{c}{2}\int_{f^{-1}(B_\rho(x_1))} \vn{A}^{2j-2}d\mu
 + c(j,k,l)\int_M \vn{A}^{2k-j} + \vn{A}^l d\mu
\\
&\le \frac{1}{2}\int_{f^{-1}(B_\rho(x_1))} \vn{\nabla_{(2)}A}^2d\mu
 + c(j)\int_M \vn{\nabla_{(2)} A}\cdot\vn{A}^{j-1} d\mu
\\*
&\qquad
 + c_\rho^2\frac{c}{2}\int_{f^{-1}(B_\rho(x_1))} \vn{A}^{2j-2}d\mu
 + c(j,k,l)\int_M \vn{A}^{2k-j} + \vn{A}^l d\mu
\\
&\le \int_{f^{-1}(B_\rho(x_1))} \vn{\nabla_{(2)}A}^2d\mu
 + c_\rho^2c(j)\int_{f^{-1}(B_\rho(x_1))} \vn{A}^{2j-2}d\mu
\\*
&\qquad
 + c(j,k,l)\int_M \vn{A}^{2k-j} + \vn{A}^l d\mu
\\
&\le \int_{f^{-1}(B_\rho(x_1))} \vn{\nabla_{(2)}A}^2d\mu
 + c_\rho^2c(j,C_{A\!B})\Big(\int_{f^{-1}(B_\rho(x_1))} \vn{A}^md\mu\Big)^\frac{2j-2}{m}
\\*
&\ 
 + c(j,k,l,C_{A\!B})\sup_{x\in\R^{n+1}}c_\rho
                     \bigg[\Big(\int_{f^{-1}(B_\rho(x))} \vn{A}^{m}d\mu\Big)^\frac{2k-j}{m}
                   + \Big(\int_{f^{-1}(B_\rho(x))} \vn{A}^m d\mu\Big)^\frac{l}{m}\bigg]
\\
&\le c_h(\delta^{(m)}_0,C_{A\!B},\rho,j,k,l,n) < \infty.
\end{align*}
The other cases are simpler, and estimated as in Lemma \ref{lemalltheworkapriorih}, finished off
using Corollary \ref{curvestspecialball} as above.
\end{proof}

This shows that for the class of constraint functions satisfying the conditions of Theorem
\ref{thmapriorih} and a small curvature condition \eqref{ERRATAeqnsmallness}, the a priori bound
\eqref{A1} holds.  Since we only require \eqref{A1} while \eqref{ERRATAeqnsmallness} is true,
this is enough to include constraint functions satisfying 
the growth condition \eqref{ERRATAgrowthoncosntraintcondition} and area bound \eqref{AB}
in our main theorem.


\begin{rmk}  There is an alternative approach, based also on Theorem \ref{topping}, which works
without the assumption \eqref{AB}.  However this requires monotonicity of $\int |H|$ on a ball
around $x_1$, and does not give higher dimensional results.
It is relevant to $h_K$ flow, where we have monotonicity of $\int H$ on the entire manifold,
for all time.  However the essential problem is that there is no known condition which rules out the
case where mean curvature is becoming more negative in one part of the manifold and more positive
in another part, such that the integral over the entire manifold is non-increasing, but for any
small ball the integral $\int |H|$ is increasing.
Also, even if such a case is ruled out, we have no way of ensuring that the special points
$x_1$ are in the regions of  $M$ where $\int |H|$ is monotone.  What we really lack is a non-trivial 
condition we can impose on $M_0$ such that monotonicity of $\int H$ implies monotonicity of $\int
|H|$, however without the maximum principle we have not been able to achieve this.  Thus $h_K$ still
presents difficulty.
\end{rmk}

\section{Evolution equations for integrals of curvature.}

To begin, we state the following elementary evolution equations, whose proof is standard.
\begin{lem}\label{LemEV1}
  For $f:M^n\times[0,T)\rightarrow\R^{n+1}$ evolving by \eqref{CSD} the following equations
hold:
\label{l:curvevo}
\begin{align*}
  \pD{}{t}g &= 2(\Delta H)A + 2hA,\\
  \pD{}{t}d\mu &= (Hh+H\Delta H)d\mu,\\
  \pD{}{t}\nu &= -\nabla \Delta H \text{,  and}\\
  \pD{}{t}A &= -\Delta^2A + P_3^2(A) + hA*A.\\
\end{align*}
\end{lem}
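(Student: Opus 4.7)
These are the standard evolution equations for a hypersurface flow with normal speed $F\nu$, here specialised to $F = \Delta H + h$ with $h$ depending only on $t$. For the metric I would differentiate $g_{ij} = \eIP{\partial_i f}{\partial_j f}$, interchanging $\partial_t$ with $\partial_i$. Using $\partial_i\partial_t f = (\partial_i F)\nu + F\partial_i\nu$ and the Weingarten relation $\partial_i\nu = A_i^k\partial_k f$, the $\nu$-contribution dies against the inner product with $\partial_j f$, leaving $\partial_t g_{ij} = 2FA_{ij} = 2(\Delta H)A_{ij} + 2hA_{ij}$. The volume element then evolves via the trace identity $\partial_t d\mu = \frac{1}{2}g^{ij}(\partial_t g_{ij})\,d\mu = FH\,d\mu$, which is exactly the stated formula.

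For the unit normal, $\eIP{\nu}{\nu} = 1$ forces $\partial_t\nu$ to be tangential, so I would write $\partial_t\nu = X^j\partial_j f$ and differentiate $\eIP{\nu}{\partial_i f} = 0$ in time. Using the computation of the previous paragraph gives $X^j g_{ji} = -\partial_i F$, hence $\partial_t\nu = -\nabla F$. Since $h$ depends only on $t$, $\nabla h = 0$ and this reduces to $-\nabla\Delta H$.

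The only genuinely non-routine step is the second fundamental form. Differentiating $A_{ij} = -\eIP{\partial_i\partial_j f}{\nu}$ and feeding in the computations above produces the general identity $\partial_t A_{ij} = -\nabla_i\nabla_j F + FA_i^k A_{kj}$. Substituting $F = \Delta H + h$ and noting $\nabla h = 0$ gives
\[
\partial_t A_{ij} = -\nabla_i\nabla_j\Delta H + (\Delta H)A_i^k A_{kj} + hA_i^k A_{kj}.
\]
The $hA*A$ piece already appears in the stated form. For the remaining terms I would apply Simons' identity \eqref{SimonsIdentity} to write $\nabla_{(2)} H = \Delta A + A*A*A$, then apply $\Delta$ to both sides:
\[
\Delta\nabla_{ij}H = \Delta^2 A_{ij} + \Delta(A*A*A) = \Delta^2 A_{ij} + P_3^2(A),
\]
the Leibniz expansion of $\Delta(A*A*A)$ producing terms of the form $A*A*\Delta A$ and $A*\nabla A*\nabla A$. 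The discrepancy between $\nabla_{ij}\Delta H$ and $\Delta\nabla_{ij}H$ is a sum of commutators of covariant derivatives: by the interchange formula $\nabla_{ab}T = \nabla_{ba}T + P_2^0(A)*T$ together with Gauss' equation $R = A*A$, each such commutator contributes an expression of type $A*A*\nabla_{(k)}H$ with $k\le 2$, which a second use of Simons' identity again reduces to $P_3^2(A)$ modulo lower-order terms absorbed in the loose $*$-notation convention of Section 2. The cross term $(\Delta H)A*A$ is itself already of $P_3^2(A)$ type, and collecting everything yields $\partial_t A = -\Delta^2 A + P_3^2(A) + hA*A$. The main (and only) obstacle is bookkeeping: verifying that every commutator correction and every Leibniz term fits the $P_3^2(A)$ template under the loose reading of the $*$-notation.
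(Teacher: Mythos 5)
The paper does not supply a proof of this lemma; it declares the equations ``standard.'' Your derivation is the standard one and it is correct: the general identities $\partial_t g_{ij}=2FA_{ij}$, $\partial_t d\mu=FH\,d\mu$, $\partial_t\nu=-\nabla F$, $\partial_t A_{ij}=-\nabla_i\nabla_j F+FA_i^kA_{kj}$ for a normal speed $F$, specialised to $F=\Delta H+h$ with $\nabla h=0$, and Simons' identity plus commuting $\nabla_{(2)}$ past $\Delta$ to convert $-\nabla_{(2)}\Delta H$ to $-\Delta^2A+P_3^2(A)$. One small inaccuracy in the bookkeeping: you describe the commutator contributions as all being of type $A*A*\nabla_{(k)}H$, but iterated interchanges also produce $\nabla R*\nabla H=A*\nabla A*\nabla H$ terms (since one takes derivatives of the curvature factor when commuting further). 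This does not affect the conclusion, since $A*\nabla A*\nabla H$ is still $P_3^2(A)$ after Codazzi, and no Simons-induced $A*A*A$ remainder survives at the $\nabla_{(k)}H\to\nabla_{(k)}A$ conversion step because $\nabla H$ and $\nabla_{(2)}H$ are simply metric traces of $\nabla A$ and $\nabla_{(2)}A$, so nothing of $P_5^0$ type can appear in the commutator.
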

\begin{lem}Let $f:M^n\times[0,T)\rightarrow\R^{n+1}$ be a \eqref{CSD} flow.  Then the following
equation holds:
\[
  \pD{}{t}\nabla_{(k)}A = -\Delta^2\nabla_{(k)}A + hP_2^k(A) + P_3^{k+2}(A).
\]
\end{lem}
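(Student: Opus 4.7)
The plan is to prove this by induction on $k$, using the preceding Lemma \ref{LemEV1} as the base case. For $k = 0$ the statement reads $\partial_t A = -\Delta^2 A + hP_2^0(A) + P_3^2(A)$, which matches Lemma \ref{LemEV1} once we observe that $A*A$ is by definition a term of type $P_2^0(A)$.

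For the inductive step, assume the formula holds for some $k \ge 0$ and compute
\[
\pD{}{t}\nabla_{(k+1)}A \;=\; \nabla\Big(\pD{}{t}\nabla_{(k)}A\Big) \;+\; \big[\partial_t,\nabla\big]\nabla_{(k)}A.
\]
Substituting the inductive hypothesis into the first piece and using that $h = h(t)$, so that $\nabla h = 0$, gives
\[
\nabla\Big(\pD{}{t}\nabla_{(k)}A\Big) = -\nabla\Delta^2\nabla_{(k)}A + h\,P_2^{k+1}(A) + P_3^{k+3}(A),
\]
since $\nabla P_j^i(A) \subset P_j^{i+1}(A)$. For the commutator $[\partial_t,\nabla]$, one differentiates $\pD{}{t}g = 2(\Delta H + h)A$ with the Christoffel formula \eqref{C3Echristoffelmetric} to obtain $\partial_t\Gamma = g^{-1}*\nabla\big((\Delta H)A + hA\big) = P_3^3(A) + h\,P_1^1(A)$ (using $H = \operatorname{tr}_gA$ to write $\nabla\Delta H$ as a contraction of $\nabla_{(3)}A$). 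Since $[\partial_t,\nabla]T = (\partial_t\Gamma)*T$ for any tensor, this contributes $P_4^{k+3}(A) + h\,P_2^{k+1}(A) \subset P_3^{k+3}(A) + h\,P_2^{k+1}(A)$ when applied to $\nabla_{(k)}A$.

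It remains to convert $-\nabla\Delta^2\nabla_{(k)}A$ into $-\Delta^2\nabla_{(k+1)}A$ plus acceptable remainders. The interchange formula $\nabla_{ij}T - \nabla_{ji}T = P_2^0(A)*T$ iterated four times produces
\[
\nabla\Delta^2\nabla_{(k)}A = \Delta^2\nabla_{(k+1)}A + \sum_{i+j=k+2,\,i\le k+1} P_2^0(A)*\nabla_{(i)}A\,*\,(\text{derivative of A}),
\]
i.e.\ all commutator terms are of type $P_3^{k+2}(A) \subset P_3^{k+3}(A)$ after the extra $\nabla$ is distributed, using Gauss' equation to write the Riemann tensor as $A*A$.

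Combining the three contributions yields
\[
\pD{}{t}\nabla_{(k+1)}A = -\Delta^2\nabla_{(k+1)}A + h\,P_2^{k+1}(A) + P_3^{k+3}(A),
\]
which closes the induction. The only real bookkeeping obstacle is verifying that every commutator with $\Delta^2$ and with $\partial_t$ lands cleanly in the $P_3^{k+3}$ or $hP_2^{k+1}$ classes; this is routine given the identities in Section 2 but must be done carefully so no term of type $h\,P_3^{k+1}(A)$ (which would not fit the claimed form) can escape — it cannot, because $h$ enters only linearly through $\partial_t g$ and $\partial_t A$, and the differentiations $\nabla$ do not touch $h$.
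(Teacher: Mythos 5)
Your induction-on-$k$ strategy is the standard one for this type of evolution equation (and what the paper implicitly invokes by calling the computation routine), and the overall conclusion of your argument is correct. However, two pieces of bookkeeping in the write-up are off, and happen to cancel rather than being actually justified, so they should be fixed before the proof can be read as written.

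First, in computing $\partial_t\Gamma$: since $\Delta H$ is a single contraction of $\nabla_{(2)}A$, the term $(\Delta H)A$ is of type $P_2^2(A)$ (two factors, two derivatives), so $\nabla\big((\Delta H)A\big) = P_2^3(A)$, not $P_3^3(A)$. Hence $\partial_t\Gamma = P_2^3(A) + h\,P_1^1(A)$, and $(\partial_t\Gamma)*\nabla_{(k)}A$ is directly of type $P_3^{k+3}(A) + h\,P_2^{k+1}(A)$, which is what you need. Your step ``$P_4^{k+3}(A) \subset P_3^{k+3}(A)$'' is false as stated -- the $P_j^i$ classes record the exact number of factors, and four-factor terms are not a subclass of three-factor terms -- but the correction above removes any need for it.

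Second, in commuting $\nabla$ past $\Delta^2$: each interchange of a pair of covariant derivatives, applied to $\nabla\Delta^2\nabla_{(k)}A$ (which has $k+5$ total derivatives of $A$), trades two derivative slots for a Riemann factor $R = A*A$. The resulting remainder terms are therefore of the form $\nabla_{(a)}\big(A*A*\nabla_{(b)}A\big)$ with $a+b = k+3$, i.e.\ exactly $P_3^{k+3}(A)$ after Leibniz -- three factors, $k+3$ derivatives, on the nose. Your display $\sum P_2^0(A)*\nabla_{(i)}A*(\text{derivative of }A)$ reads as a four-factor expression, and ``$P_3^{k+2}(A) \subset P_3^{k+3}(A)$'' is again not a genuine inclusion. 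Once these two counts are corrected, the three contributions you identify ($\nabla$ of the inductive hypothesis, the $\partial_t\Gamma$ commutator, and the $\nabla$--$\Delta^2$ commutator) each land precisely in $-\Delta^2\nabla_{(k+1)}A + h\,P_2^{k+1}(A) + P_3^{k+3}(A)$ and the induction closes cleanly.
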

The following is an easy consequence of the above lemma.
\begin{cor}
Let $f:M^n\times[0,T)\rightarrow\R^{n+1}$ be a \eqref{CSD} flow.  Then the following equation
holds:
\[
  \pD{}{t}\vn{\nabla_{(k)}A}^2 = - 2\IP{\nabla_{(k)}A}{\nabla^p\Delta\nabla_p\nabla_{(k)}A}
                                 + [hP_2^k(A) + P_3^{k+2}(A)]*\nabla_{(k)}A.
\]
\label{S4cor2}
\end{cor}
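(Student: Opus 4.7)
The plan is to apply the product rule to $\vn{\nabla_{(k)}A}^2 = \IP{\nabla_{(k)}A}{\nabla_{(k)}A}$, remembering that the inner product depends on time through the metric, and then to invoke the preceding lemma for $\pD{}{t}\nabla_{(k)}A$. The only non-routine input is the preceding lemma and the evolution equation $\pD{}{t}g = 2(\Delta H)A + 2hA$ from Lemma~\ref{LemEV1}; everything else is bookkeeping in the $*$-notation.

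More precisely, I would first write $\vn{\nabla_{(k)}A}^2$ as a polynomial in the components of $g^{-1}$ and of $\nabla_{(k)}A$, then differentiate in time. The time derivatives of the metric contractions yield, via $\pD{}{t}g^{-1} = -g^{-1}(\pD{}{t}g)g^{-1}$ and Lemma~\ref{LemEV1}, terms that schematically read
\[
(\Delta H)\,A*\nabla_{(k)}A*\nabla_{(k)}A \;+\; h\,A*\nabla_{(k)}A*\nabla_{(k)}A.
\]
The first is in $P_3^{k+2}(A)*\nabla_{(k)}A$ (three curvature factors $\nabla_{(2)}A$, $A$, $\nabla_{(k)}A$ with derivative count $2+0+k = k+2$), and the second is in $hP_2^k(A)*\nabla_{(k)}A$. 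The remaining contribution from the product rule is $2\IP{\pD{}{t}\nabla_{(k)}A}{\nabla_{(k)}A}$; inserting the preceding lemma immediately produces $-2\IP{\Delta^2\nabla_{(k)}A}{\nabla_{(k)}A}$ plus further $hP_2^k(A)*\nabla_{(k)}A$ and $P_3^{k+2}(A)*\nabla_{(k)}A$ terms.

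It remains to replace $\Delta^2\nabla_{(k)}A$ by $\nabla^p\Delta\nabla_p\nabla_{(k)}A$. Writing $\Delta^2\nabla_{(k)}A = \nabla^p\nabla_p\nabla^q\nabla_q\nabla_{(k)}A$ and using the interchange formula $\nabla_{ij}T = \nabla_{ji}T + P_2^0(A)*T$ stated in Section~2 (the Gauss equation identifies the Riemann tensor with $A*A$), one commutes the inner pair of derivatives to produce $\nabla^p\nabla^q\nabla_q\nabla_p\nabla_{(k)}A = \nabla^p\Delta\nabla_p\nabla_{(k)}A$ up to commutator corrections. Each such correction is a covariant derivative applied to something of the form $A*A*\nabla_{(j)}A$ with $j\le k+1$, and counting factors and derivatives shows every resulting term is in $P_3^{k+2}(A)$; pairing with $\nabla_{(k)}A$ in the inner product lands them inside the claimed error. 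Collecting all contributions and absorbing absolute constants into the $*$-notation yields the stated equation.

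The only delicate step is the last one, verifying that every commutator term produced when converting $\Delta^2$ into $\nabla^p\Delta\nabla_p$ fits precisely into $P_3^{k+2}(A)$ and not into a larger $P_j^i$. Since exactly one commutation is required and Riemann equals $A*A$, each correction has three curvature factors and total derivative count $k+2$, so this is essentially automatic once the indices are tracked carefully.
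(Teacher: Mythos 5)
Your argument is correct and is precisely the ``easy consequence'' the paper has in mind: apply the product rule using $\pD{}{t}g^{-1} = -g^{-1}(\pD{}{t}g)g^{-1}$ together with Lemma~\ref{LemEV1}, substitute the preceding lemma for $\pD{}{t}\nabla_{(k)}A$, and commute $\Delta^2$ into $\nabla^p\Delta\nabla_p$ absorbing the Riemann-commutator corrections into $P_3^{k+2}(A)*\nabla_{(k)}A$. The only small imprecision is the remark that ``exactly one commutation is required'' --- passing $\nabla_p$ through $\nabla^q\nabla_q$ actually produces two commutators, plus their outer $\nabla^p$-derivatives --- but your factor-and-derivative count still shows every correction lies in $P_3^{k+2}(A)$, so the conclusion is unaffected.
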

%
Using Corollary \ref{S4cor2}, we derive the following integral identity.
\begin{cor}
\label{CorRE2}
Let $f:M^n\times[0,T)\rightarrow\R^{n+1}$ be a \eqref{CSD} flow, and $\gamma$ as in \eqref{e:gamma}.
Then for any $s\ge0$,
\begin{align*}
\rD{}{t}\int_M \vn{\nabla_{(k)}A}^2\gamma^sd\mu + 2\int_M\vn{\nabla_{(k+2)}A}^2\gamma^sd\mu
   &=  \int_M \vn{\nabla_{(k)}A}^2(\partial_t\gamma^s)d\mu
\\ &\hskip-6.5cm
    + 2\int_M\IP{(\nabla\gamma^s)(\nabla_{(k)}A)}{\Delta\nabla_{(k+1)}A} d\mu
     - 2\int_M\IP{(\nabla\gamma^s)(\nabla_{(k+1)}A)}{\nabla_{(k+2)}A}d\mu 
\\ &\hskip-6.5cm
    + \int_M\gamma^s[(P_3^{k+2}(A)+hP_2^k(A))*\nabla_{(k)}A]d\mu.
\end{align*}
\end{cor}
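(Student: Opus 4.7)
The strategy is to differentiate under the integral sign, substitute the pointwise evolution from Corollary \ref{S4cor2}, and then integrate by parts twice on the resulting top-order term to release the bulk $\vn{\nabla_{(k+2)}A}^2\gamma^s$ and the two $\nabla\gamma^s$ cutoff errors.

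Using the Leibniz rule together with $\partial_t d\mu = (Hh+H\Delta H)d\mu$ from Lemma \ref{LemEV1}, the left-hand side splits as
\begin{equation*}
\int_M (\partial_t\vn{\nabla_{(k)}A}^2)\gamma^s d\mu + \int_M \vn{\nabla_{(k)}A}^2(\partial_t\gamma^s) d\mu + \int_M \vn{\nabla_{(k)}A}^2\gamma^s(Hh+H\Delta H) d\mu.
\end{equation*}
The middle term already matches one of the terms on the right of the claim. The third can be rewritten purely in $P$-notation: since $H = P_1^0(A)$ and $\Delta H = P_1^2(A)$, one has $Hh\vn{\nabla_{(k)}A}^2 = hP_2^k(A)*\nabla_{(k)}A$ and $H\Delta H\vn{\nabla_{(k)}A}^2 = P_3^{k+2}(A)*\nabla_{(k)}A$, which I will merge with the analogous lower-order terms arising below.

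For the first integral I insert Corollary \ref{S4cor2}: the $[hP_2^k(A)+P_3^{k+2}(A)]*\nabla_{(k)}A$ piece of $\partial_t\vn{\nabla_{(k)}A}^2$ combines with the measure-evolution contribution above to form precisely the final $P$-term in the claim. The remaining top-order piece is
\begin{equation*}
-2\int_M \IP{\nabla_{(k)}A}{\nabla^p\Delta\nabla_p\nabla_{(k)}A}\gamma^s d\mu,
\end{equation*}
which I handle with two successive integrations by parts on $M$ (which is closed, so there are no boundary contributions). The first, peeling off $\nabla^p$, yields $2\int_M\IP{\nabla_{(k+1)}A}{\Delta\nabla_{(k+1)}A}\gamma^s d\mu$ together with the cutoff term $2\int_M \IP{(\nabla\gamma^s)(\nabla_{(k)}A)}{\Delta\nabla_{(k+1)}A} d\mu$. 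The second, peeling off the remaining $\Delta$, yields $-2\int_M \vn{\nabla_{(k+2)}A}^2\gamma^s d\mu$ together with the cutoff term $-2\int_M\IP{(\nabla\gamma^s)(\nabla_{(k+1)}A)}{\nabla_{(k+2)}A} d\mu$. Moving the bulk term to the left produces the identity as stated.

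The only real subtlety is bookkeeping the commutators: identifying $\nabla_p\nabla_{(k)}A$ with $\nabla_{(k+1)}A$, and likewise after the second integration by parts, introduces curvature-correction terms of the schematic form $P_2^0(A)*\nabla_{(k)}A$ via the interchange identity $\nabla_{ij}T = \nabla_{ji}T + P_2^0(A)*T$ recalled in Section 2. Each such correction has strictly fewer derivatives than the leading order and so is absorbed harmlessly into $P_3^{k+2}(A)*\nabla_{(k)}A$, contributing nothing new to the right-hand side.
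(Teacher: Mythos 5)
Your proposal is correct and matches the intended derivation: differentiate under the integral using $\partial_t d\mu = (Hh+H\Delta H)d\mu$ from Lemma \ref{LemEV1}, substitute Corollary \ref{S4cor2} for $\partial_t\vn{\nabla_{(k)}A}^2$, absorb the measure-evolution term into the $P$-bracket, and integrate the leading term $-2\int_M\IP{\nabla_{(k)}A}{\nabla^p\Delta\nabla_p\nabla_{(k)}A}\gamma^s d\mu$ by parts twice on the closed manifold $M$ to produce the bulk term and the two $\nabla\gamma^s$ cutoff errors. One minor remark: your concern about commutators in the final paragraph is unfounded --- since $\nabla_{(k+1)}A$ is \emph{defined} as $\nabla_{i_0}\cdots\nabla_{i_k}A$ in the given order, the quantity $\nabla_p\nabla_{(k)}A$ \emph{is} $\nabla_{(k+1)}A$ and no interchange of derivatives is performed in either integration by parts; the commutation has already been carried out once and for all in passing from $\Delta^2\nabla_{(k)}A$ to $\nabla^p\Delta\nabla_p\nabla_{(k)}A$ inside Corollary \ref{S4cor2}. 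Your over-cautious observation is harmless, since even if such corrections did arise they would indeed be absorbed into $P_3^{k+2}(A)*\nabla_{(k)}A$, but the reader should be aware that the two integrations by parts here are genuinely exact.
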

We now wish to use interpolation to estimate the extraneous terms from integration by parts.  For
$k=1$, the required inequality follows easily (for $\theta,\beta>0$):
\begin{equation}
(1-\beta)\int_M \vn{\nabla A}^2\gamma^{s-2}d\mu
\le \theta\int_M\vn{\nabla_{(2)}A}^2\gamma^sd\mu
   + \frac{\beta+\theta[(s-2)c_{\gamma1}]^2}{4\beta\theta}
     \int_M\vn{A}^2\gamma^{s-4}d\mu.
\label{babyint}
\end{equation}

For $k>1$ however we need a more powerful version of the above.  Let $2\le p < \infty$, $k\in\N$,
$s\ge kp$, and $\theta > 0$.  Then we have
\begin{equation}
\left( \int_M \vn{\nabla_{(k)}A}^p\gamma^sd\mu \right)^\frac{1}{p}
  \le \theta \left( \int_M \vn{\nabla_{(k+1)}A}^p\gamma^{s+p}d\mu \right)^\frac{1}{p}
      + c \left( \int_{[\gamma>0]}\vn{A}^p\gamma^{s-kp}d\mu \right)^\frac{1}{p},
\label{TMint1}
\end{equation}
where $c = c(\theta, c_{\gamma1}, s, p)$.
This is proved, essentially, by induction on the inequality \eqref{babyint}.  Details can be found in
\cite{kuwert2002gfw}. 
  We now estimate the equality in Corollary \ref{CorRE2}.



\begin{prop}
\label{p:prop3}
Let $f:M^n\times[0,T)\rightarrow\R^{n+1}$ be a \eqref{CSD} flow with $h$ satisfying \eqref{A1}
and $\gamma$ a cutoff function as in \eqref{e:gamma}.  Then for a fixed $\theta > 0$ and
$s\ge2k+4$,
\begin{align}
&\rD{}{t}\int_M \vn{\nabla_{(k)}A}^2\gamma^sd\mu
 + (2-\theta)\int_M \vn{\nabla_{(k+2)}A}^2\gamma^sd\mu\notag\\*
&\qquad \le (c+ch)\int_M \vn{A}^2\gamma^{s-4-2k}d\mu
            + ch\int_M\left(\nabla_{(k)}[A*A]*\nabla_{(k)}A\right)
                     \gamma^{s} d\mu
\notag\\*&\qquad\qquad\qquad
            + c\int_M \left([P_3^{k+2}(A)+P_5^k(A)]*\nabla_{(k)}A\right)\gamma^{s} d\mu,
\notag
\end{align}
where $c=c(c_{\gamma1},c_{\gamma2},s,k,\vn{h}_{\infty,[0,T)},\theta)$.
\end{prop}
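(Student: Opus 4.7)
The starting point is the identity of Corollary \ref{CorRE2}, which writes $\rD{}{t}\int_M \vn{\nabla_{(k)}A}^2\gamma^s d\mu + 2\int_M\vn{\nabla_{(k+2)}A}^2\gamma^s d\mu$ as a sum of four contributions: (A) a term involving $\partial_t\gamma^s$; (B) and (C) two cross terms involving $\nabla\gamma^s$; and (D) the polynomial term $\int_M\gamma^s[(P_3^{k+2}(A)+hP_2^k(A))*\nabla_{(k)}A]d\mu$. The plan is to estimate each of (A), (B), (C) via integration by parts, Young's inequality, and the interpolation estimate \eqref{TMint1}, so that every contribution involving $\vn{\nabla_{(k+2)}A}^2$ is controlled by a small multiple $\theta\int\vn{\nabla_{(k+2)}A}^2\gamma^s d\mu$ absorbable onto the left, and all other contributions fit one of the three structural classes on the right of the conclusion.

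Term (D) is already in the required form: the first summand is exactly the $P_3^{k+2}(A)*\nabla_{(k)}A$ piece of the conclusion, while by Leibniz $P_2^k(A) = \nabla_{(k)}[A*A]$, matching the $h$-coupled piece. Term (C) is bounded by $|\nabla\gamma^s| \le sc_{\gamma1}\gamma^{s-1}$ and Cauchy-Schwarz, giving $\theta\int\vn{\nabla_{(k+2)}A}^2\gamma^s d\mu + c(\theta)\int\vn{\nabla_{(k+1)}A}^2\gamma^{s-2}d\mu$ after Young; the last integral is then reduced by \eqref{TMint1} to another $\theta\int\vn{\nabla_{(k+2)}A}^2\gamma^s$ plus $c\int_{[\gamma>0]}\vn{A}^2\gamma^{s-4-2k}d\mu$. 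Term (B) is treated by a further integration by parts of the Laplacian, converting it into $\nabla_{(k+2)}A$ and producing one summand of type (C) (handled as above) plus a summand involving $\nabla_{(2)}\gamma^s$. Using the bound $|\nabla_{(2)}\gamma^s|\le c(\gamma^{s-2}+\gamma^{s-1}\vn{A})$ from \eqref{e:gamma}, Young and \eqref{TMint1} reduce each sub-piece to the claimed structural forms. The $\vn{A}\gamma^{s-1}$ part in particular yields a $\vn{A}^2\vn{\nabla_{(k)}A}^2$ contribution, which via the estimate $\vn{A}^2\vn{\nabla_{(k)}A}^2\le \vn{\nabla_{(k)}A}^2 + \vn{A}^4\vn{\nabla_{(k)}A}^2$ is packaged into the $P_5^k(A)*\nabla_{(k)}A$ class (after absorbing the lower-order piece via interpolation).

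Term (A) is the subtlest. Differentiating $\gamma=\tilde\gamma\circ f$ in time and using $\partial_tf = (\Delta H + h)\nu$ yields $\partial_t\gamma^s = s\gamma^{s-1}\IP{D\tilde\gamma\circ f}{\nu}(\Delta H + h)$. The $h$-part produces $c|h|\int\vn{\nabla_{(k)}A}^2\gamma^{s-1}d\mu$, reducible by iterating \eqref{TMint1} to $\theta\int\vn{\nabla_{(k+2)}A}^2\gamma^s + c|h|\int\vn{A}^2\gamma^{s-4-2k}$, which supplies the $(c+ch)\int\vn{A}^2\gamma^{s-4-2k}$ portion of the conclusion; crucially, $|h|\le\vn{h}_{\infty,[0,T)}<\infty$ by \eqref{A1}. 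The $\Delta H$-part is integrated by parts against the density $\vn{\nabla_{(k)}A}^2\gamma^{s-1}\IP{D\tilde\gamma\circ f}{\nu}$, producing three subterms where $\nabla H$ is paired with (i) $\nabla\gamma^{s-1}$, (ii) the derivative of $\IP{D\tilde\gamma\circ f}{\nu}$ — which by the Weingarten relation contributes a factor of $\vn{A}$ — or (iii) $\nabla\vn{\nabla_{(k)}A}^2 = 2\IP{\nabla_{(k+1)}A}{\nabla_{(k)}A}$. After Young, each piece either absorbs into $\theta\int\vn{\nabla_{(k+2)}A}^2\gamma^s$ (via its highest-derivative factor, with help from \eqref{TMint1}) or fits into the $P_3^{k+2}(A)*\nabla_{(k)}A$ or $P_5^k(A)*\nabla_{(k)}A$ classes.

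The main obstacle is the bookkeeping of polynomial types and $\gamma$-weights through the cascade of integrations by parts, Young's inequalities, and applications of \eqref{TMint1}: each intermediate cross term $\gamma^{s-j}\vn{\nabla_{(a)}A}\cdots\vn{\nabla_{(b)}A}$ must be identified, up to constants depending on $s,c_{\gamma1},c_{\gamma2},\vn{h}_{\infty,[0,T)},\theta$, as belonging to one of the three structural classes on the right-hand side. The flexibility of the $*$-notation, the hypothesis $s\ge 2k+4$ (so that $\gamma^{s-4-2k}$ remains a nonnegative power), and the uniform bound $\vn{h}_\infty<\infty$ from \eqref{A1} are what make the accounting close.
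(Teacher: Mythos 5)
Your proposal follows exactly the route the paper indicates: start from Corollary \ref{CorRE2}, treat the terms generated by $\partial_t\gamma^s$ and $\nabla\gamma^s$ via integration by parts, Young's inequality and the interpolation inequality \eqref{TMint1}, absorb the small multiples of $\int\vn{\nabla_{(k+2)}A}^2\gamma^s d\mu$, and classify the residuals into the $\vn{A}^2\gamma^{s-4-2k}$, $P_3^{k+2}(A)*\nabla_{(k)}A$, $P_5^k(A)*\nabla_{(k)}A$ and $h$-coupled families, with \eqref{A1} supplying the uniform bound on $h$ needed to push the $ch\int\vn{\nabla_{(k)}A}^2\gamma^{s-1}d\mu$ contribution into the absolute constant and the $\vn{A}^2\gamma^{s-4-2k}$ term. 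This is the same argument the paper leaves to the reader, and your bookkeeping of the $\gamma$-weights and $P$-classes is the appropriate level of detail.
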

The proof is standard, and follows by using Corollary \ref{CorRE2} and inequality
\eqref{TMint1} to deal with the derivatives of $\gamma$, estimating the result, and absorbing.
To prove Corollary \ref{curvestspecialball} we also need a version of the above estimate where we do
not assume \eqref{A1}.  For this purpose, we state the following proposition.

\begin{prop}
\label{EQpropBEwithh}
Let $f:M\times[0,T)\rightarrow\R^3$ be a \eqref{CSD} flow 
and $\gamma$ a cutoff function as in \eqref{e:gamma}.  Then for a fixed $\theta > 0$ and
$s\ge2k+4$,
\begin{align*}
&\rD{}{t}\int_M \vn{\nabla_{(k)}A}^2\gamma^sd\mu
 + (2-\theta)\int_M \vn{\nabla_{(k+2)}A}^2\gamma^sd\mu\\*
&\qquad \le 
             ch\int_M\left(\nabla_{(k)}[A*A]*\nabla_{(k)}A\right)
                     \gamma^{s} d\mu
            + ch\int_M\vn{\nabla_{(k)}A}^2
                     \gamma^{s-1} d\mu
\\*&\qquad\qquad\qquad
 + c\int_M \vn{A}^2\gamma^{s-4-2k}d\mu
 + c\int_M \left([P_3^{k+2}(A)+P_5^k(A)]*\nabla_{(k)}A\right)\gamma^{s} d\mu,
\end{align*}
where $c=c(c_{\gamma1},c_{\gamma2},s,k)$.
\end{prop}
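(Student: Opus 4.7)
My plan is to start from the exact identity in Corollary \ref{CorRE2}, which decomposes $\rD{}{t}\int_M \vn{\nabla_{(k)}A}^2 \gamma^s d\mu + 2\int_M \vn{\nabla_{(k+2)}A}^2 \gamma^s d\mu$ into four pieces: (i) the $\partial_t \gamma^s$ term; (ii)--(iii) the two integration-by-parts cross terms involving $\nabla\gamma^s$ paired with $\nabla_{(k+1)}A$ and $\Delta\nabla_{(k+1)}A$; and (iv) the pointwise polynomial term $\int_M \gamma^s\,[(P_3^{k+2}(A) + hP_2^k(A)) * \nabla_{(k)}A]\,d\mu$. Pieces (ii)--(iv) are to be treated exactly as in the proof of Proposition \ref{p:prop3}. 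For (ii)--(iii), Cauchy--Schwarz together with $|\nabla \gamma^s| \le sc_{\gamma 1}\gamma^{s-1}$ from \eqref{e:gamma} turns them into products of $\vn{\nabla_{(k)}A}$, $\vn{\nabla_{(k+1)}A}$ and $\vn{\nabla_{(k+2)}A}$; Young's inequality peels off a small piece absorbed into $\int_M\vn{\nabla_{(k+2)}A}^2\gamma^s d\mu$ on the left, and the interpolation \eqref{TMint1} collapses the remaining intermediate-derivative factors into $c\int_M\vn{A}^2\gamma^{s-4-2k}d\mu$. Piece (iv) splits trivially: $P_3^{k+2}(A)*\nabla_{(k)}A$ appears verbatim on the right, and $hP_2^k(A)*\nabla_{(k)}A$ is, up to a constant, exactly $ch\int_M \nabla_{(k)}[A*A]*\nabla_{(k)}A\,\gamma^s d\mu$. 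The only structural difference from Proposition \ref{p:prop3} is that, without the hypothesis \eqref{A1}, these $h$-factors are kept explicit rather than swallowed into a generic constant.

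The only genuinely new calculation is piece (i). Since $\gamma = \tilde{\gamma}\circ f$ and $\partial_t f = (\Delta H + h)\nu$, the chain rule gives
\[
\partial_t \gamma^s = s\gamma^{s-1}\bigl((D\tilde{\gamma} \circ f)\cdot\nu\bigr)(\Delta H + h),
\]
with $|(D\tilde{\gamma} \circ f)\cdot\nu| \le c_{\tilde{\gamma}}$. I split the corresponding integral into an $h$-part and a $\Delta H$-part. The $h$-part contributes directly the second right-hand side term $ch\int_M \vn{\nabla_{(k)}A}^2\gamma^{s-1}d\mu$. For the $\Delta H$-part I integrate by parts once, writing $\Delta H = \nabla^i\nabla_iH$ and moving one derivative off $H$ onto the product $\vn{\nabla_{(k)}A}^2\gamma^{s-1}q$, where $q = (D\tilde{\gamma}\circ f)\cdot\nu$. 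Combining the chain rule with the Weingarten relations gives $|\nabla q| \le c(1+\vn{A})$, and $|\nabla H|\le \sqrt n\,\vn{\nabla A}$, so the resulting integrand is a finite sum of terms of shape $\vn{\nabla_{(k+1)}A}\,\vn{\nabla_{(k)}A}\,\vn{\nabla A}\,\gamma^{s-1}$, $\vn{\nabla_{(k)}A}^2\,\vn{\nabla A}\,\gamma^{s-2}$, and $\vn{\nabla_{(k)}A}^2\,\vn{\nabla A}\,(1+\vn{A})\,\gamma^{s-1}$.

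Each of these residues is then reshaped into one of the stated right-hand side pieces by repeated use of Young's inequality --- redistributing $\gamma$-powers between the endpoints $\gamma^s$ and $\gamma^{s-4-2k}$, which is exactly where the hypothesis $s\ge 2k+4$ is used --- together with the interpolation \eqref{TMint1}. A Young split on the $\vn{\nabla_{(k+1)}A}$-factor sends one part into a small multiple of $\vn{\nabla_{(k+2)}A}^2\gamma^s$ (absorbed on the left after one more application of \eqref{TMint1}) and the other into a $P_4^{2k+2}$-integral, which fits the $P_3^{k+2}(A)*\nabla_{(k)}A$-term; the $(1+\vn{A})$-factor similarly produces $P_4^{2k+2}$- and $P_6^{2k}$-type pieces absorbable into the $P_3^{k+2}$- and $P_5^k$-terms respectively; the low-order remainders collapse into $c\int_M\vn{A}^2\gamma^{s-4-2k}d\mu$. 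The delicate part of the argument is exactly this final bookkeeping of $\gamma$-exponents in the $\Delta H$-part: because no bound on $\vn{h}_\infty$ is available to swallow generic constants, one must track carefully where each $h$-factor arises and ensure the $h$-dependence appears only in the two designated coefficients on the right.
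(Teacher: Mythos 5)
Your overall architecture is sound and matches what the paper (implicitly) intends: start from the exact identity of Corollary~\ref{CorRE2}, process pieces (ii)--(iv) exactly as in Proposition~\ref{p:prop3}, and isolate the $\partial_t\gamma^s$ contribution as the one place where the absence of \eqref{A1} forces a structural change. The key observation --- that the $h$-part of $\partial_t\gamma^s = s\gamma^{s-1}\bigl((D\tilde\gamma\circ f)\cdot\nu\bigr)(\Delta H+h)$ cannot be interpolated via \eqref{TMint1}, since that would produce $h\,\theta\int\vn{\nabla_{(k+2)}A}^2\gamma^s$ which requires a bound on $h$ to absorb, and therefore must be left as the explicit term $ch\int_M\vn{\nabla_{(k)}A}^2\gamma^{s-1}d\mu$ --- is exactly right, and is the whole reason Proposition~\ref{EQpropBEwithh} differs from Proposition~\ref{p:prop3}.

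Two cautions on the $\Delta H$-part. First, the integration by parts is an unnecessary detour: since the paper's earlier evolution lemmas already package everything via $*$-notation, the intended route is almost certainly the direct bound $|\Delta H|\le c\vn{\nabla_{(2)}A}$, yielding $c\int_M\vn{\nabla_{(k)}A}^2\vn{\nabla_{(2)}A}\gamma^{s-1}d\mu$; your integration by parts produces the same family of three-factor residues plus extra terms from $\nabla q$ and $\nabla\gamma$ without any gain. Second --- and this is a genuine soft spot in your write-up --- after a Young split the piece you call the ``$P_4^{2k+2}$-integral'' carries a power $\gamma^{s-2}$ (or lower), not $\gamma^s$; since $\gamma\le 1$, a term with $\gamma^{s-2}$ \emph{dominates} one with $\gamma^s$, so it cannot simply be ``absorbed into the $P_3^{k+2}(A)*\nabla_{(k)}A$-term.'' For the $k=0$ case the residue after the obvious $(2,2)$-Young split is $c\int_M\vn{A}^4\gamma^{s-2}d\mu$, which is a four-factor zero-derivative term that matches none of the three stated buckets ($\vn{A}^2\gamma^{s-4-2k}$, $P_3^2(A)*A\cdot\gamma^s$, $P_5^0(A)*A\cdot\gamma^s$) on the nose. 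This same issue is latent in Proposition~\ref{p:prop3}, so it is not a defect peculiar to your proof, but you assert the fit rather than demonstrate it. To close the argument you would need to show concretely how the interpolation inequality \eqref{TMint1} (or a further Young split followed by \eqref{TMint1}, keeping track of the number of factors and the $\gamma$-exponent simultaneously) returns the residue to one of the three stated right-hand-side forms; ``the delicate part of the argument is exactly this final bookkeeping'' is an accurate self-assessment, but it is the part that actually has to be carried out.
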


\section{Integral estimates with small concentration of curvature.}

We will first need a few Sobolev and interpolation inequalities, importantly the Michael-Simon
Sobolev inequality, \cite{michael1973sam}.

\begin{thm}[Michael-Simon Sobolev inequality] Let $f:M^n\rightarrow\R^{n+1}$ be a smooth immersion.  Then
for any $u\in C_c^1(M)$ we have
\[  \left(\int_M |u|^{n/(n-1)}d\mu\right)^{(n-1)/n}
    \le \frac{4^{n+1}}{\omega_n^{1/n}} \int_M \vn{\nabla u} + |u|\ |H| d\mu,
\]
where $\omega_n$ is the volume of the unit ball in $\R^n$.
\end{thm}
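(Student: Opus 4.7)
The plan is to adapt the classical proof of the Euclidean Sobolev inequality, due to Michael and Simon, to the submanifold setting. The essential new ingredient compared with the flat case is that the ambient position vector $x - x_0$ is no longer tangential to $M$, so the tangential divergence $\operatorname{div}_M(x-x_0) = n - \langle H\nu, x-x_0\rangle$ picks up a curvature correction. This is exactly what introduces the $|H|$ term on the right-hand side. First I would reduce to the case $u \ge 0$ by replacing $u$ with a smooth approximation of $|u|$ and invoking dominated convergence.

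Next I would establish a monotonicity-type inequality for the weighted density
\[
I(r) := r^{-n}\int_{M\cap B_r(x_0)} u\, d\mu
\]
for $x_0\in\R^{n+1}$ arbitrary. The key computation is to start from the identity
\[
\operatorname{div}_M\bigl(u(x-x_0)\bigr) = \bigl\langle\nabla^M u,\, x-x_0\bigr\rangle + u\bigl(n - \langle H\nu, x-x_0\rangle\bigr),
\]
integrate over $M\cap B_r(x_0)$, and apply the divergence theorem together with the coarea formula. After controlling the boundary term via the projection $|(x-x_0)^\perp|\le r$ and rearranging, this yields an estimate of the form $I(\sigma)\le I(\rho)+C\int_\sigma^\rho s^{-n}\int_{M\cap B_s}(\|\nabla u\|+u|H|)\,d\mu\,ds/s$ for $0<\sigma<\rho$, which after letting $\sigma\downarrow 0$ and using lower semicontinuity controls the pointwise value $u(x_0)$ at Lebesgue points by an integral involving only $\|\nabla u\|$ and $u|H|$.

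From this density bound I would run a Vitali-type covering argument on the superlevel sets $\{u>t\}$. For each $x_0$ in the level set, the monotonicity inequality produces a distinguished radius $r(x_0)$ at which a favourable lower density bound is attained; Vitali's lemma selects a disjoint subfamily whose $5$-fold (optimised to $4$-fold with care) enlargement covers the level set. Summing the density estimates and integrating in $t$ via the layer-cake formula $\int |u|^{n/(n-1)}d\mu = \tfrac{n}{n-1}\int_0^\infty t^{1/(n-1)}\mu(\{u>t\})\,dt$, followed by H\"older in $t$, gives the stated inequality; the explicit constant $4^{n+1}/\omega_n^{1/n}$ emerges from tracking the volume of the unit ball and the Vitali enlargement factor.

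The main obstacle is step two: deriving the monotonicity inequality cleanly. The boundary term from $\partial(M\cap B_r(x_0))$ is delicate because it must be bounded by the full boundary flux $\frac{d}{dr}\int_{M\cap B_r}u\,d\mu$, which requires a careful decomposition of $x-x_0$ into tangential and normal components along $\partial B_r$ and the observation that the tangential part has length $\le r$. All subsequent steps are essentially bookkeeping given the monotonicity, but the curvature correction term in $\operatorname{div}_M(x-x_0)$ must be handled by absorbing $u|H|$ into the right-hand side rather than the left, which is what forces the $|H|$ contribution to appear additively in the final inequality.
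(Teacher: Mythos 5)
The paper does not prove this inequality; it states it and cites Michael and Simon \cite{michael1973sam}, so there is no in-paper proof to compare against. Your sketch is a faithful outline of the original Michael--Simon argument: reduce to $u\ge 0$; exploit the tangential divergence of the position vector field, which produces the dimension $n$ plus a mean-curvature correction; derive a monotonicity-type inequality for the weighted density $r^{-n}\int_{M\cap B_r(x_0)}u\,d\mu$; let $r\downarrow 0$ to obtain a pointwise mean-value bound at Lebesgue points; and close with a Vitali covering of the superlevel sets together with the layer-cake formula and H\"older. Two small points. The error term in your monotonicity inequality should be integrated against $s^{-n}\,ds$, not $s^{-n}\,ds/s$; the extra factor of $1/s$ breaks the scaling. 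Also, integrating $\operatorname{div}_M\bigl(u(x-x_0)\bigr)$ over $M\cap B_r$ and citing the divergence theorem is a little loose, since $M\cap B_r$ has boundary; the clean version tests the first-variation identity on all of $M$ against the compactly supported field $\phi(|x-x_0|)\,u(x)\,(x-x_0)$ for a radial cutoff $\phi$ and then differentiates in the cutoff scale --- which is precisely what the coarea step you mention is implementing. Neither issue is a conceptual gap; the plan matches the argument the paper implicitly imports.
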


The eventual goal for this
section is to prove local $L^\infty$ estimates for all derivatives of curvature.  Our main tool to
convert $L^p$ bounds to $L^\infty$ bounds is the following theorem, which is an $n$-dimensional
analogue of Theorem 5.6 from \cite{kuwert2002gfw}.  Its proof may be found in \cite{mythesis}.

\begin{thm}
\label{myLZthm}
Let $f:M^n\rightarrow\R^{n+1}$ be a smooth immersed hypersurface. For $u\in C_c^1(M)$,
$n<p\le\infty$, $0\le \beta\le \infty$ and $0<\alpha\le 1$ where $\frac{1}{\alpha} =
\big(\frac{1}{n}-\frac{1}{p}\big)\beta + 1$ we have
\begin{equation}
  \vn{u}_\infty \le c\vn{u}_\beta^{1-\alpha}(\vn{\nabla u}_p + \vn{Hu}_p)^\alpha,
\label{myLZthmeqn}
\end{equation}
where $c = c(n,p,\beta)$.
\end{thm}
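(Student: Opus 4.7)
The plan is a Moser-style iteration built on the Michael--Simon Sobolev inequality, in the spirit of Ladyzhenskaya--Ural'tseva--Solonnikov adapted to hypersurfaces (compare Theorem 5.6 of \cite{kuwert2002gfw}). Write $n^* = n/(n-1)$ and $p' = p/(p-1)$, with the convention $p' = 1$ when $p = \infty$. For each $q \ge 1$, apply the Michael--Simon inequality to $v = |u|^q$ (justified, if needed, by a routine approximation $(\varepsilon^2 + u^2)^{q/2} \to |u|^q$) and then H\"older's inequality with conjugate pair $(p, p')$ to each of the two terms on the right. The outcome is the one-step recursion
\[
\vn{u}_{qn^*}^q \le c\,q\,\vn{u}_{(q-1)p'}^{q-1}\bigl(\vn{\nabla u}_p + \vn{Hu}_p\bigr).
\]

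\textbf{The iteration.} Set $s_0 = \beta$, $s_{k+1} = n^* + (n^*/p')s_k$, and $q_k = 1 + s_k/p'$, so that $(q_k-1)p' = s_k$ and $q_k n^* = s_{k+1}$. The hypothesis $p > n$ gives $\mu := n^*/p' > 1$, hence $s_k$ and $q_k$ grow geometrically and $\sum 1/q_k < \infty$. Writing $B = \vn{\nabla u}_p + \vn{Hu}_p$, taking logarithms in the recursion and iterating $k$ times produces
\[
\log \vn{u}_{s_k} \le \Big(\prod_{j=0}^{k-1}\bigl(1-\tfrac{1}{q_j}\bigr)\Big)\log \vn{u}_\beta + \sum_{j=0}^{k-1} \frac{\log(cq_j B)}{q_j}\prod_{i=j+1}^{k-1}\bigl(1-\tfrac{1}{q_i}\bigr).
\]
Since $u \in C_c^1$ and $s_k \to \infty$, the left side tends to $\log \vn{u}_\infty$; summability of $1/q_k$ together with the at most geometric growth of $cq_k$ guarantees that the products and sums on the right converge.

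\textbf{Identifying the exponents.} Let $P = \prod_{j \ge 0}(1 - 1/q_j)$ be the limiting exponent on $\vn{u}_\beta$ and $S$ the limiting exponent on $B$. Multiplying $u$ by $\lambda > 0$ scales both sides of the claimed inequality by $\lambda$, forcing $P + S = 1$. To compute $P$, use $s_j + p' = s_{j+1}/\mu$ to obtain the telescoping identity
\[
P = \prod_{j \ge 0} \frac{s_j}{s_j + p'} = \lim_{k \to \infty}\prod_{j=0}^{k-1} \mu\cdot\frac{s_j}{s_{j+1}} = \lim_{k \to \infty}\frac{\mu^k s_0}{s_k} = \frac{\beta}{\beta + np/(p-n)},
\]
where the last equality uses the closed form $s_k = \mu^k \beta + n^*(\mu^k-1)/(\mu-1)$ and the identity $n^*/(\mu - 1) = np/(p-n)$. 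Therefore
\[
S = 1-P = \frac{1}{1 + \beta\bigl(\tfrac{1}{n} - \tfrac{1}{p}\bigr)} = \alpha,
\]
exactly the relation stipulated in the theorem. Exponentiating yields $\vn{u}_\infty \le c \vn{u}_\beta^{1-\alpha} B^\alpha$ with $c = c(n,p,\beta)$ finite.

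\textbf{Main obstacle.} The substantive point is the algebraic bookkeeping: one must check that the multiplicative constants $\prod (cq_j)^{1/q_j}$ and the sums $\sum (\log cq_j)/q_j$ converge to finite quantities depending only on $n, p$, and $\beta$, and that the telescoping product for $P$ collapses to precisely the exponent dictated by scaling. Once this algebraic identity is recognised, the remainder of the argument is routine. The degenerate cases $\beta = 0$ (interpreting $\vn{u}_\beta^0 = 1$) and $p = \infty$ (where $p' = 1$) are absorbed into the same scheme without modification.
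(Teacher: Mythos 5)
Your proof is correct and takes the same route the paper references: a Moser-type iteration built on the Michael--Simon Sobolev inequality, in the spirit of Ladyzhenskaya--Ural'tseva--Solonnikov and of Theorem 5.6 in Kuwert--Sch\"atzle. The one-step recursion $\vn{u}_{qn^*}^q \le cq\,\vn{u}_{(q-1)p'}^{q-1}(\vn{\nabla u}_p + \vn{Hu}_p)$, the chaining $q_kn^* = s_{k+1} = (q_{k+1}-1)p'$, the geometric growth of $q_k$ (which gives convergence of the accumulated constants), and the telescoping computation of the limiting exponents all check out; in particular $n^*/(\mu-1) = np/(p-n)$ yields $P = \beta/(\beta + np/(p-n)) = 1-\alpha$, matching the scaling relation in the statement.

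One caveat: your handling of the boundary case $\beta = 0$ is not right as written. You assert that one may ``interpret $\vn{u}_\beta^0 = 1$'' and absorb this case without modification, but with that reading and $\alpha = 1$ the inequality becomes $\vn{u}_\infty \le c(\vn{\nabla u}_p + \vn{Hu}_p)$ with a constant independent of the immersion, and this is false: taking a dilated bump function $u_\lambda(x) = \phi(x/\lambda)$ supported on a geodesic ball of radius comparable to $\lambda$ in a round sphere of radius comparable to $\lambda$, one has $\vn{u_\lambda}_\infty$ fixed while both $\vn{\nabla u_\lambda}_p$ and $\vn{Hu_\lambda}_p$ behave like $\lambda^{n/p-1} \to 0$ since $p>n$. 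In fact your own recursion at the first step $q_0 = 1$ produces the H\"older factor $\vn{1}_{p',\,\mathrm{supp}\,u} = \mu(\mathrm{supp}\,u)^{1/p'}$, and correspondingly the honest limit of $\vn{u}_\beta^{1-\alpha}$ as $\beta \to 0^+$ is $\mu(\mathrm{supp}\,u)^{1/n - 1/p}$, not $1$. This is only a boundary case and does not affect $\beta>0$, which is all the paper uses, but the claim about $\beta=0$ should be corrected or that case excluded from the statement.
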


The proof follows ideas from \cite{ladyzhenskaya1968laq} and \cite{kuwert2002gfw}; see also Section
6 of \cite{mantegazza2002sge}.  Due to the
exponent in the Michael-Simon Sobolev inequality, it is not possible to decrease the lower bound on
$p$, even at the expense of other parameters in the inequality.
%
%
This introduces a restriction on the dimension of our immersion, and 
is highlighted in the following local refinement to Theorem
\ref{myLZthm}.

\begin{prop}
\label{MS2prop}
 Let $n\in\{2,3\}$.  Then for any tensor $T$ on $f:M^n\rightarrow\R^{n+1}$ and $\gamma$
as in \eqref{e:gamma},
\begin{equation}
  \vn{T}^4_{\infty,[\gamma=1]}
    \le c\vn{T}^{4-n}_{2,[\gamma>0]}\big( \vn{\nabla_{(2)}T}^n_{2,[\gamma>0]}
                                   + \vn{TA^2}^n_{2,[\gamma>0]}
                                   + \vn{T}^n_{2,[\gamma>0]}\big),
\label{MS2}
\end{equation}
where $c=c(c_{\gamma1},n)$.
Assume $T=A$, and if $n=3$ also assume \eqref{AB}.  
Then there exists an $\epsilon_0 = \epsilon_0(c_{\gamma1},c_{\gamma2},n)$ such that if 
\[
  \vn{A}^n_{n,[\gamma>0]} \le \epsilon_0
\]
we have
\begin{equation}
  \vn{A}^{8n-12}_{\infty,[\gamma=1]}
    \le c\epsilon_0
         \big(\vn{\nabla_{(2)}A}^{2n^2-3n}_{2,[\gamma>0]}
           + \epsilon_0\big),
\label{MS2secondstatement}
\end{equation}
with $c=c(c_{\gamma1},c_{\gamma2},n,\epsilon_0)$ for $n=2$ and
$c=c(c_{\gamma1},c_{\gamma2},n,\epsilon_0,C_{A\!B})$ for $n=3$.
\end{prop}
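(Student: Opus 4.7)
My plan is to deduce both inequalities from Theorem \ref{myLZthm}, the Michael–Simon Sobolev inequality, and the interpolation inequality \eqref{TMint1}, the three tools that are already on the table.

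For the first inequality \eqref{MS2}, the idea is to apply Theorem \ref{myLZthm} to the scalar $|T|\gamma^s$ for an exponent $s$ large enough so that derivatives falling on $\gamma^s$ can be controlled by \eqref{e:gamma}. The parameters must be chosen with the target scaling in mind: raising the conclusion of Theorem \ref{myLZthm} to the fourth power, one wants $\vn{u}_{\beta}^{4(1-\alpha)}$ to match $\vn{T}_2^{4-n}$ and $(\vn{\nabla u}_p+\vn{Hu}_p)^{4\alpha}$ to become the bracket on the right of \eqref{MS2}. A natural choice is $\beta=2$ and $\alpha=n/4$, which forces $p=2n/(n-2)$ (for $n=2$ this degenerates, so one replaces this single step with a two-step application using a finite $p$ together with \eqref{TMint1}). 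The $L^p$-norm of $\nabla(T\gamma^s)$ splits by the product rule into $\vn{\nabla T}_p$ plus a lower-order $\vn{T}$-term from $\nabla\gamma^s$, and $\vn{H T}_p$ is bounded by $c\vn{AT}_p$. The inequality \eqref{TMint1}, with Hölder and Young, then converts these $L^p$-norms into the $L^2$-norms of $\nabla_{(2)}T$, $TA^2$ and $T$ that appear on the right of \eqref{MS2}.

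For the second inequality \eqref{MS2secondstatement}, I would specialise \eqref{MS2} to $T=A$, giving
\[
\vn{A}_{\infty,[\gamma=1]}^4 \le c\vn{A}_{2,[\gamma>0]}^{4-n}\bigl(\vn{\nabla_{(2)}A}_{2,[\gamma>0]}^n+\vn{A^3}_{2,[\gamma>0]}^n+\vn{A}_{2,[\gamma>0]}^n\bigr).
\]
The factor $\vn{A}_{2}^{4-n}$ is controlled from the smallness assumption $\vn{A}_{n}^{n}\le\epsilon_0$: for $n=2$ one has $\vn{A}_2^2=\vn{A}_n^n\le\epsilon_0$ directly, and for $n=3$ Hölder with \eqref{AB} gives $\vn{A}_2^2\le C_{A\!B}^{1/3}\epsilon_0^{2/3}$. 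The genuinely critical term is $\vn{A^3}_2^n=\vn{A}_6^{3n}$: I would estimate $\vn{A}_6$ by applying Michael–Simon to $|A|^{3}\gamma^{s}$ (or iterating Michael–Simon on a suitable power of $|A|$), producing an inequality in which $\vn{A}_6$ is controlled by $\vn{A}_n^{\lambda}$ times $(\vn{\nabla_{(2)}A}_2+\vn{A^3}_2+\vn{A}_2)^{1-\lambda}$ with $\lambda>0$. Because $\vn{A}_n^n\le\epsilon_0$, the factor multiplying the offending $\vn{A^3}_2^n$ on the right of \eqref{MS2} acquires a positive power of $\epsilon_0$ and can be absorbed into the $\vn{\nabla_{(2)}A}_2^n$ term by Young, provided $\epsilon_0=\epsilon_0(c_{\gamma1},c_{\gamma2},n)$ is chosen small enough. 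Finally, to produce the powers $8n-12$ and $2n^2-3n$ in \eqref{MS2secondstatement}, one raises the resulting inequality to the $(2n-3)$-th power, clearing $\vn{A}_2^{4-n}$ using its $\epsilon_0$-bound, so that a single factor of $\epsilon_0$ remains outside the bracket.

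The main obstacle is the absorption step: $\vn{A^3}_2^n$ has the same scaling as the left-hand side and cannot be estimated by $\vn{\nabla_{(2)}A}_2^n$ without the smallness hypothesis. This is why one needs to iterate Michael–Simon once more to extract a full factor of $\vn{A}_n^n$, and it is exactly this iteration that breaks down when $n\ge 4$: the Michael–Simon exponent $n/(n-1)\le 4/3$ is then too weak to close the chain. Managing the algebraic bookkeeping of the exponents in the $n=3$ case, where the $|M_t|$-dependence via \eqref{AB} enters, is the other technical hurdle, but it is routine once the absorption mechanism is in place.
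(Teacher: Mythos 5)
Your plan for the second inequality \eqref{MS2secondstatement} tracks the paper's intent very closely: the paper uses Lemma~\ref{MS1lem} (a multiplicative Sobolev inequality, itself obtained by applying Michael--Simon to a power of $\vn{A}$) together with \eqref{AB} and H\"older's inequality for the $n=3$ case, and your steps --- specialising \eqref{MS2} to $T=A$, bounding $\vn{A}_{2,[\gamma>0]}^{4-n}$ via $\vn{A}_n^n\le\epsilon_0$ and, for $n=3$, H\"older with \eqref{AB}, controlling the critical $\vn{A^3}_{2}=\vn{A}_6^3$ by iterated Michael--Simon and absorbing via the smallness of $\epsilon_0$, then raising to the $(2n-3)$-th power to hit the exponents $8n-12$ and $2n^2-3n$ --- reproduce exactly that route. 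For the first inequality the paper simply defers to Lemma 4.3 of Kuwert--Sch\"atzle, which is proved using what is here Theorem~\ref{myLZthm}; your parameter choice $\beta=2$, $\alpha=n/4$, $p=2n/(n-2)$ and the observation that this degenerates at $n=2$ are both correct and consistent with that reference.

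However, one step in your argument for \eqref{MS2} would not go through as stated. You invoke \eqref{TMint1} ``with H\"older and Young'' to ``convert these $L^p$-norms into the $L^2$-norms of $\nabla_{(2)}T$, $TA^2$ and $T$.'' The inequality \eqref{TMint1} is a \emph{fixed-exponent} interpolation: it relates $\vn{\nabla_{(k)}A}_p$ to $\vn{\nabla_{(k+1)}A}_p$ and $\vn{A}_p$ at the \emph{same} $p$, so it cannot take $\vn{\nabla(T\gamma^s)}_{2n/(n-2)}$ or $\vn{H\,T\gamma^s}_{2n/(n-2)}$ down to $L^2$. What actually performs that conversion is the Michael--Simon Sobolev inequality iterated on powers of $\vn{\nabla T}$ (equivalently, a Gagliardo--Nirenberg interpolation $\vn{T}_\infty\lesssim\vn{T}_2^{1-n/4}\vn{\nabla_{(2)}T}_2^{n/4}+\cdots$), and it is precisely this Sobolev step --- which requires $\tfrac12-\tfrac{2}{n}<0$, i.e.\ $n<4$ --- that produces the extra $|H|$ factors accumulating into $TA^2$ and that accounts for the dimension restriction. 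With that substitution your argument aligns with the proof in Kuwert--Sch\"atzle that the paper cites.
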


The proof is similar to that of Lemma 4.3 in \cite{kuwert2002gfw}, except for the $n=3$ case.  While
the first statement follows a similar proof with minor alterations, for the second statement 
one needs to use the $n=3$ version of the below multiplicative Sobolev inequality, and the area
bound \eqref{AB} with H\"older's inequality.




\begin{lem}
\label{MS1lem}
Let $\gamma$ be as in \eqref{e:gamma}.  Then for an immersed surface $f:M^2\rightarrow\R^{3}$ we
have
  \begin{align*}
  \int_M\vn{A}^6\gamma^sd\mu + \int_M\vn{A}^2\vn{\nabla A}^2\gamma^sd\mu
   &\le  c\int_{[\gamma>0]}\vn{A}^2d\mu\int_M(\vn{\nabla_{(2)}A}^2 + \vn{A}^6)\gamma^sd\mu
   \\
   &\hskip+2cm + c(c_{\gamma1})^4\Big( \int_{[\gamma>0]}\vn{A}^2d\mu \Big)^2,
  \end{align*}
and for an immersion $f:M^3\rightarrow\R^4$,
  \begin{align*}
\int_M\vn{A}^6\gamma^sd\mu + \int_M\vn{A}^2\vn{\nabla A}^2\gamma^sd\mu
 &\le \theta\int_M \vn{\nabla_{(2)}A}^2\gamma^sd\mu
\\
&\hskip-4.5cm
    +  c\vn{A}_{3,[\gamma>0]}^\frac{3}{2}
     \int_M \big(\vn{\nabla_{(2)} A}^2 + \vn{A}^6\big) \gamma^s d\mu
    + c(c_{\gamma1})^3\big(\vn{A}^3_{3,[\gamma>0]} + \vn{A}^\frac{9}{2}_{3,[\gamma>0]}\big),
  \end{align*}
where $\theta\in(0,\infty)$ and $c = c(s,\theta)$ is an absolute constant.
\end{lem}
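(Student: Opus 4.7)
Both estimates follow the template of Lemma 4.2 of \cite{kuwert2002gfw}: apply the Michael--Simon Sobolev inequality to an appropriately chosen scalar test function, square the result, and then use Cauchy--Schwarz to peel off a factor of $\vn{A}$ that contributes the geometric smallness quantity ($\int_{[\gamma>0]}\vn{A}^2 d\mu$ when $n=2$, $\vn{A}_{3,[\gamma>0]}^{3/2}$ when $n=3$). The interpolation inequality \eqref{babyint} is then invoked to trade $|\nabla A|^2$ for $\theta|\nabla_{(2)}A|^2 + c\vn{A}^2$ after appropriate weighting, and the cutoff-derivative contributions are collected into the final residual by Young's inequality.

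For $n=2$, where the Michael--Simon exponent $n/(n-1)$ equals $2$, the natural choice is $u=\vn{A}^3\gamma^{s/2}$. Using Kato's inequality $|\nabla\vn{A}|\le|\nabla A|$, the product rule, and $|H|\le c\vn{A}$, the Michael--Simon inequality yields
\begin{equation*}
\Bigl(\int_M\vn{A}^6\gamma^s d\mu\Bigr)^{\!1/2} \le c\int_M(\vn{A}^2|\nabla A|+\vn{A}^4)\gamma^{s/2}d\mu + c\,c_{\gamma 1}\int_{[\gamma>0]}\vn{A}^3\gamma^{s/2-1}d\mu.
\end{equation*}
Squaring and applying Cauchy--Schwarz to each term with the splitting $\int\vn{A}^2|\nabla A|\gamma^{s/2}=\int\vn{A}\cdot(\vn{A}|\nabla A|\gamma^{s/2})$ (and similarly for the other terms) factors out $\int_{[\gamma>0]}\vn{A}^2 d\mu$ and leaves $\int\vn{A}^2|\nabla A|^2\gamma^s d\mu$, $\int\vn{A}^6\gamma^s d\mu$, and a cutoff-weighted $\int\vn{A}^4\gamma^{s-2}d\mu$. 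Running an entirely analogous computation with $u=\vn{A}\cdot|\nabla A|\cdot\gamma^{s/2}$, or alternatively integrating by parts once in $\int\vn{A}^2|\nabla A|^2\gamma^s d\mu$ and then using Cauchy--Schwarz, bounds the second left-hand-side term in the same form, with $\int\vn{\nabla_{(2)}A}^2\gamma^s d\mu$ now playing the role of the top-order piece. Finally \eqref{babyint} trades the residual $\int\vn{A}^4\gamma^{s-2}d\mu$ for $\int\vn{\nabla_{(2)}A}^2\gamma^s d\mu$ plus a cutoff term, which is absorbed into $c(c_{\gamma 1})^4\bigl(\int_{[\gamma>0]}\vn{A}^2 d\mu\bigr)^{2}$ via Young's inequality.

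For $n=3$ the Michael--Simon exponent drops to $\tfrac{3}{2}$, so raising the relevant side of the MS inequality to the appropriate power and then applying H\"older with conjugate exponents arranged to extract the smallness factor as $\vn{A}_{3,[\gamma>0]}^{3/2}$ produces the stated prefactor. The area bound \eqref{AB} is then used with a further H\"older step to convert lower-order $L^{3/2}$ pieces into $L^2$-quantities that can be absorbed, eventually producing the $c(c_{\gamma 1})^3\bigl(\vn{A}_{3,[\gamma>0]}^{3} + \vn{A}_{3,[\gamma>0]}^{9/2}\bigr)$ residual; crucially, an absorbing term $\theta\int\vn{\nabla_{(2)}A}^2\gamma^s d\mu$ must be allowed on the right, because $L^3$-smallness is too weak to fully dominate the top-order integral after the peeling step, in contrast to the $L^2$-smallness that suffices for $n=2$. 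The main obstacle is precisely this sub-$2$ Sobolev exponent for $n=3$: it obstructs the clean Cauchy--Schwarz dualisation that works for $n=2$, so the H\"older exponents, the test function, and the use of \eqref{AB} must all be tuned carefully to both recover $\vn{A}_{3,[\gamma>0]}^{3/2}$ as the smallness prefactor and leave a remainder that can be absorbed, either by the $\theta$-term or by the cutoff residual.
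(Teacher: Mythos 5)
Your outline of the $n=2$ case is consistent with the standard Kuwert--Sch\"atzle argument that the paper simply cites (the paper does not reprove Lemma 4.2 of \cite{kuwert2002gfw}), so nothing to object to there. The $n=3$ case, however, has two genuine problems.

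First, you invoke the area bound \eqref{AB}, but Lemma \ref{MS1lem} has no such hypothesis and the paper's proof makes no use of it. Adding \eqref{AB} as an ingredient here would silently weaken the statement. You have likely imported it from Proposition \ref{MS2prop}, where \eqref{AB} is in fact needed for $n=3$ --- but that is a separate, later step that uses this lemma as a black box. The residual terms in the $n=3$ conclusion are deliberately left as $L^3$ norms ($\vn{A}_{3,[\gamma>0]}^3$ and $\vn{A}_{3,[\gamma>0]}^{9/2}$), so there is no need to convert to $L^2$ quantities as you describe, and the conversion step you propose is neither needed nor admissible given the hypotheses.

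Second, and more structurally, your proposal is missing the mechanism that actually produces the $\theta\int\vn{\nabla_{(2)}A}^2\gamma^s d\mu$ term. The paper's proof begins with a separate, nontrivial bound on $\int_M\vn{\nabla A}^3\gamma^s d\mu$ obtained by integrating by parts once (producing $\IP{A}{\Delta A}*\nabla A$, $A*\nabla A*\nabla\vn{\nabla A}$, and cutoff-derivative terms), applying Young repeatedly, and then absorbing a $\frac{5}{6}\int\vn{\nabla A}^3\gamma^s$ term on the left. Only after that does the Michael--Simon inequality enter, applied to $u=\vn{A}^4\gamma^{2s/3}$ (note: not a test function adapted to the first-derivative integral), giving a bound on $\int\vn{A}^6\gamma^s$ with a leading term $\int\vn{\nabla A}^2\vn{A}\gamma^s$ that is then controlled by the earlier $\vn{\nabla A}^3$ estimate. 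The second left-hand term is handled by the elementary Young bound $\int\vn{A}^2\vn{\nabla A}^2\gamma^s \le c\int\vn{A}^6\gamma^s + c\int\vn{\nabla A}^3\gamma^s$ and the two estimates already obtained. Your appeal to \eqref{babyint} cannot substitute for this: \eqref{babyint} is an $L^2$-based interpolation and does not touch the $L^3$-type quantity $\int\vn{\nabla A}^3\gamma^s$ that is the crux of the $n=3$ argument. Without this intermediate estimate, the proposed proof does not close.
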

\begin{proof}
The first statement is Lemma 4.2 in \cite{kuwert2002gfw}.
For the second, first observe that
\begin{align*}
\int \vn{\nabla A}^3\gamma^sd\mu 
&\le 
 \int_M\big(\IP{A}{\Delta A}*{\nabla A} + A*\nabla A*\nabla\vn{\nabla A}\big) \gamma^sd\mu
\\
&\qquad
    + s\int_M \big(A*\nabla A*\nabla A*\nabla\gamma\big)\gamma^{s-1}d\mu
\\
 &\le \frac{1}{4\theta}\int_M\vn{\nabla_{(2)}A}^2\gamma^sd\mu
    + \theta\int_M\vn{A}^2\vn{\nabla A}^2\gamma^sd\mu
\\
&\qquad
    + \frac{(sc_{\gamma1})^34^2}{3}\int_M \vn{A}^3\gamma^{2s-3}d\mu
    + \frac{1}{6}\int_M \vn{\nabla A}^3\gamma^{s}d\mu
\\
&\le \frac{1}{4\theta}\int_M \vn{\nabla_{(2)}A}^2\gamma^sd\mu 
    + \frac{\theta^3}{3}\int_M \vn{A}^6\gamma^sd\mu
    + \frac{(sc_{\gamma1})^34^2}{3}\int_M \vn{A}^3\gamma^{2s-3}d\mu
\\
&\qquad
    + \frac{5}{6}\int_M \vn{\nabla A}^3 \gamma^sd\mu,
\intertext{so}
\int \vn{\nabla A}^3d\mu 
&\le \frac{3}{2\theta}\int_M \vn{\nabla_{(2)}A}^2\gamma^sd\mu + 2\theta^3\int_M \vn{A}^6\gamma^sd\mu
    + 2(sc_{\gamma1})^34^2\int_{[\gamma>0]} \vn{A}^3d\mu,
\end{align*}
for any $\theta \in (0,\infty)$.

Now we use the Michael-Simon Sobolev inequality with $u=\vn{A}^4\gamma^{2s/3}$ to estimate
\begin{align*}
\Big(\int_M \vn{A}^6 \gamma^sd\mu\Big)^\frac{2}{3}
 &\le c\int_M \vn{A}^3\vn{\nabla A}\gamma^\frac{2s}{3}d\mu
    + c\int_M \vn{A}^4\vn{\nabla\gamma}\gamma^{\frac{2s-3}{3}}d\mu
    + c\int_M \vn{A}^5\gamma^\frac{2s}{3} d\mu
\\
 &\le c\int_M \vn{\nabla A}^2\vn{A}\gamma^sd\mu + c\int_M \vn{A}^5\gamma^s d\mu
    + c(c_{\gamma1})^2\vn{A}^3_{3,[\gamma>0]}
\\
 &\le c\int_M \vn{\nabla A}^2\vn{A}\gamma^sd\mu + 
                \Big(\int_M\vn{A}^6\gamma^sd\mu\Big)^\frac{2}{3}
                \Big(\int_{[\gamma>0]}\vn{A}^{3}d\mu\Big)^\frac{1}{3}
\\*
&\qquad
    + c(c_{\gamma1})^2\vn{A}^3_{3,[\gamma>0]},
\intertext{so}
\int_M \vn{A}^6 \gamma^s d\mu
 &\le c\Big(\int_M \vn{\nabla A}^2\vn{A}\gamma^sd\mu\Big)^\frac{3}{2}
    + c\vn{A}_{3,[\gamma>0]}^\frac{3}{2}\int_M \vn{A}^6 \gamma^s d\mu
\\*
&\qquad
    + c(c_{\gamma1})^3\vn{A}^\frac{9}{2}_{3,[\gamma>0]}
\\
 &\le c\vn{A}_{3,[\gamma>0]}^\frac{3}{2}
     \int_M \big(\vn{\nabla_{(2)} A}^2 + \vn{A}^6\big) \gamma^s d\mu
    + c(c_{\gamma1})^3\vn{A}^\frac{9}{2}_{3,[\gamma>0]}.
\end{align*}
This estimates the first term.  For the second, we can employ a more direct technique using our
estimates above,
\begin{align*}
\int_M\vn{A}^2\vn{\nabla A}^2\gamma^sd\mu
 &\le c\int_M \vn{A}^6 \gamma^s d\mu + c\int_M \vn{\nabla A}^3\gamma^s d\mu
\\
 &\le \theta\int_M \vn{\nabla_{(2)}A}^2\gamma^sd\mu
    +  c_\theta\vn{A}_{3,[\gamma>0]}^\frac{3}{2}
     \int_M \big(\vn{\nabla_{(2)} A}^2 + \vn{A}^6\big) \gamma^s d\mu
\\
&\qquad
    + c_\theta(c_{\gamma1})^3\big(\vn{A}^3_{3,[\gamma>0]} + \vn{A}^\frac{9}{2}_{3,[\gamma>0]}\big).
\end{align*}
This estimates the second term, and combining the two estimates above finishes the proof.
\end{proof}

The proposition used for the constructive part of the final argument can now be proved.

\begin{prop} 
\label{p:prop44}
Let $n\in\{2,3\}$.  Suppose $f:M^n\times[0,T^*]\rightarrow\R^{n+1}$ is a \eqref{CSD} flow with $h$
satisfying \eqref{A1} and $\gamma$ a cutoff function as in \eqref{e:gamma}.
  Additionally, if $n=3$ assume \eqref{AB}.
Then there is an $\epsilon_0 =
\epsilon_0\big(c_{\gamma1},c_{\gamma2},\vn{h}_{\infty,[0,T^*]}\big)$ such that if
\begin{equation}
\label{eq9}
\epsilon = \sup_{[0,T^*]}\int_{[\gamma>0]}\vn{A}^nd\mu\le\epsilon_0
\end{equation}
then for any $t\in[0,T^*]$ we have
\begin{equation}
\label{eq10}
  \begin{split}
&\int_{[\gamma=1]} \vn{A}^2 d\mu + \int_0^t\int_{[\gamma=1]} (\vn{\nabla_{(2)}A}^2 +
\vn{A}^2\vn{\nabla A}^2 + \vn{A}^6) d\mu d\tau \\
&\qquad\qquad\qquad \le \int_{[\gamma > 0]} \vn{A}^2 d\mu\Big|_{t=0}
 + c\epsilon^\frac{2}{n} t,
  \end{split}
\end{equation}
where $c=c\big(c_{\gamma1},c_{\gamma2},\vn{h}_{\infty,[0,T^*]},C_{A\!B}\big)$.
\end{prop}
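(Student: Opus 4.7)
The plan is to apply Proposition \ref{p:prop3} with $k=0$ and a fixed $s\ge 4$, then use Lemma \ref{MS1lem} together with the smallness hypothesis \eqref{eq9} to absorb the higher-order right-hand side terms into the coercive $\int_M\vn{\nabla_{(2)}A}^2\gamma^s\,d\mu$ and $\int_M\vn{A}^6\gamma^s\,d\mu$, and finally integrate in time.

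Proposition \ref{p:prop3} with $k=0$ produces a differential inequality whose right-hand side consists of three pieces: a lowest-order $(c+c\vn{h}_\infty)\int_M\vn{A}^2\gamma^{s-4}\,d\mu$; an $h$-generated cubic bounded pointwise by $c\vn{h}_\infty\vn{A}^3$; and the $P$-term $\int_M([P_3^2(A)+P_5^0(A)]*A)\gamma^s\,d\mu$, whose integrand is pointwise a linear combination of $\vn{A}^3\vn{\nabla_{(2)}A}$, $\vn{A}^2\vn{\nabla A}^2$, and $\vn{A}^6$. A single Young's inequality on $\vn{A}^3\vn{\nabla_{(2)}A}$ converts it to $\vn{\nabla_{(2)}A}^2+\vn{A}^6$, so the $P$-contribution reduces to $\int_M(\vn{A}^6+\vn{A}^2\vn{\nabla A}^2)\gamma^s\,d\mu$ up to an absorbable $\vn{\nabla_{(2)}A}^2$ piece. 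The lowest-order term is bounded by $\epsilon$ when $n=2$ and by $C_{AB}^{1/3}\epsilon^{2/3}$ via H\"older when $n=3$. For the cubic $h$-term I interpolate $\vn{A}^3$ between $\vn{A}^n$ (controlled by \eqref{eq9}) and $\vn{A}^6$ on $[\gamma>0]$ and apply Young's inequality to get
\[
c\vn{h}_\infty\int_M\vn{A}^3\gamma^s\,d\mu \;\le\; \theta'\int_M\vn{A}^6\gamma^s\,d\mu + c(\theta',\vn{h}_\infty,C_{AB})\,\epsilon^{2/n}.
\]

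At this stage the inequality has the shape $\rD{}{t}\int_M\vn{A}^2\gamma^s\,d\mu + c_0\int_M\vn{\nabla_{(2)}A}^2\gamma^s\,d\mu \le c_1\int_M(\vn{A}^6+\vn{A}^2\vn{\nabla A}^2)\gamma^s\,d\mu + c\,\epsilon^{2/n}$. Lemma \ref{MS1lem} now bounds the middle integral by $c\epsilon\int_M(\vn{\nabla_{(2)}A}^2+\vn{A}^6)\gamma^s\,d\mu + c\epsilon^2$ when $n=2$, and by $\theta\int_M\vn{\nabla_{(2)}A}^2\gamma^s\,d\mu + c\epsilon^{1/2}\int_M(\vn{\nabla_{(2)}A}^2+\vn{A}^6)\gamma^s\,d\mu$ plus an $\epsilon$-controlled remainder when $n=3$. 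Choosing $\theta,\theta'$, and $\epsilon_0$ small enough, all $\vn{\nabla_{(2)}A}^2$ and $\vn{A}^6$ pieces on the right are absorbed into the left, and a fixed positive multiple of both $\int_M\vn{A}^6\gamma^s\,d\mu$ and $\int_M\vn{A}^2\vn{\nabla A}^2\gamma^s\,d\mu$ survives on the left, the latter because Lemma \ref{MS1lem} estimates these two quantities jointly.

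The resulting pointwise-in-time inequality
\[
\rD{}{t}\int_M\vn{A}^2\gamma^s\,d\mu + c_2\int_M\big(\vn{\nabla_{(2)}A}^2 + \vn{A}^2\vn{\nabla A}^2 + \vn{A}^6\big)\gamma^s\,d\mu \;\le\; c\,\epsilon^{2/n}
\]
integrates from $0$ to $t$ and, combined with $\chi_{[\gamma=1]}\le\gamma^s\le\chi_{[\gamma>0]}$, yields \eqref{eq10}. The main obstacle is the $n=3$ case: Lemma \ref{MS1lem} there produces a free $\theta\int_M\vn{\nabla_{(2)}A}^2\gamma^s\,d\mu$ whose coefficient lacks an $\epsilon$ factor, and the other prefactor is merely $\epsilon^{1/2}$ instead of $\epsilon$, so $\epsilon_0$ (and the interpolation parameter $\theta'$) must be chosen correspondingly small, while \eqref{AB} enters explicitly when interpolating the low-order $\vn{A}^2$ and cubic $\vn{A}^3$ terms on $[\gamma>0]$.
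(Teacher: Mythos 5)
Your argument follows the paper's own proof essentially step for step: apply Proposition \ref{p:prop3} with $k=0$, reduce the $P$-terms pointwise and absorb $\vn{A}^3\vn{\nabla_{(2)}A}$ by Young, then feed $\vn{A}^6$ and $\vn{A}^2\vn{\nabla A}^2$ through Lemma \ref{MS1lem}, absorb via the smallness hypothesis (taking $\theta$ and $\epsilon_0$ small), handle the lowest-order $\int_{[\gamma>0]}\vn{A}^2\,d\mu$ with H\"older and \eqref{AB} when $n=3$, and integrate in time. The only cosmetic difference is your treatment of the cubic $h$-term, where you interpolate $\vn{A}^3$ between $L^n$ and $L^6$; the paper instead applies a pointwise Young inequality to split $h\vn{A}^3$ into $h^2\vn{A}^2$ and $\vn{A}^6$, but the two manoeuvres are equivalent and yield the same $c\,\epsilon^{2/n}$ error term, so this is the same proof in substance.
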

\begin{proof}
For $n=2$, similar to \cite{kuwert2002gfw}, except for the extra integrals arising from
the constraint function.  These are dealt with using \eqref{A1} and absorbing.  The details are
similar to the $n=3$ case, which we will describe below.
Setting $k=0$ and $s=4$ in Proposition \ref{p:prop3} we have
\begin{align}
&\rD{}{t}\int_M \vn{A}^2\gamma^4d\mu
 + (2-\theta)\int_M \vn{\nabla_{(2)}A}^2\gamma^4d\mu
 \le (c+ch)\int_{[\gamma>0]} \vn{A}^2d\mu
\notag\\
&\qquad
            + ch\int_M\left([A*A]*A\right)
                     \gamma^{4} d\mu
\label{prop44neweq4}
           + c\int_M \left([P_3^{2}(A)+P_5^0(A)]*A\right)\gamma^{4} d\mu.
\end{align}
First we estimate the $P$-style terms:
\begin{align}
\int_M&\left([P_3^{2}(A)+P_5^0(A)]*A\right)\gamma^{4} d\mu
\notag\\&\le
     c\int_M\big[\vn{A}^3\cdot\vn{\nabla_{(2)}A}
                + \vn{\nabla A}^2\cdot\vn{A}^2+\vn{A}^6\big]\gamma^{4} d\mu
\notag\\&\le
     \theta\int_M \vn{\nabla_{(2)}A}^2\gamma^4d\mu
   +  c\int_M (\vn{A}^6 + \vn{\nabla A}^2\vn{A}^2)\gamma^{4} d\mu.
\notag
\intertext{
We use Lemma \ref{MS1lem} to estimate the second integral and obtain (recall $n=3$)} 
\int_M&\left([P_3^{2}(A)+P_5^0(A)]*A\right)\gamma^{4} d\mu
\notag\\*
&\le
     \theta\int_M \vn{\nabla_{(2)}A}^2\gamma^4d\mu
     + c\vn{A}_{3,[\gamma>0]}^\frac{3}{2}\int_M(\vn{\nabla_{(2)}A}^2 + \vn{A}^6)\gamma^4d\mu
\notag\\*&\qquad
     + c\big(c_{\gamma1}\big)^3\big(\vn{A}^3_{3,[\gamma>0]} + \vn{A}^\frac{9}{2}_{3,[\gamma>0]}\big).
\label{prop44neweq1}
\end{align}
We add the integrals $\int_M \vn{A}^6 \gamma^4d\mu$ and $\int_M \vn{\nabla A}^2\vn{A}^2\gamma^4d\mu$
to the estimate
\eqref{prop44neweq4} and obtain
\begin{align*}
&\rD{}{t}\int_M \vn{A}^2\gamma^4d\mu
 + (2-\theta)\int_M \big(\vn{\nabla_{(2)}A}^2+\vn{A}^2\vn{\nabla A}^2+\vn{A}^6\big)\gamma^4 d\mu
\\
&\qquad \le (c+ch)\int_{[\gamma>0]} \vn{A}^2d\mu
            + ch\int_M\left([A*A]*A\right)
                     \gamma^{4} d\mu
\\
&\qquad\qquad
     + c\int_M \big(\vn{A}^2\vn{\nabla A}^2+\vn{A}^6\big)\gamma^4 d\mu
     + c\int_M \left([P_3^{2}(A)+P_5^0(A)]*A\right)\gamma^{4} d\mu
\\
&\qquad \le c(1+h^2)\int_{[\gamma>0]} \vn{A}^2d\mu
     + c\int_M 
        \big(\vn{A}^3\vn{\nabla_{(2)}A}+\vn{A}^2\vn{\nabla A}^2+\vn{A}^6\big)\gamma^4 d\mu.
\intertext{We now use \eqref{prop44neweq1} to obtain}
&\rD{}{t}\int_M \vn{A}^2\gamma^4d\mu
 + (2-\theta)\int_M \big(\vn{\nabla_{(2)}A}^2+\vn{A}^2\vn{\nabla A}^2+\vn{A}^6\big)\gamma^4 d\mu
\\*
&\qquad \le c(1+h^2)\int_{[\gamma>0]} \vn{A}^2d\mu
     + \theta\int_M\vn{\nabla_{(2)}A}^2\gamma^4d\mu
\\*
&\qquad\qquad
     + c\vn{A}_{3,[\gamma>0]}^\frac{3}{2}\int_M(\vn{\nabla_{(2)}A}^2 + \vn{A}^6)\gamma^4d\mu
\\*&\qquad\qquad
     + c\big(c_{\gamma1}\big)^3\big(\vn{A}^3_{3,[\gamma>0]} + \vn{A}^\frac{9}{2}_{3,[\gamma>0]}\big).
\\*
&\qquad \le c(1+h^2)C_{A\!B}^\frac{1}{3}\vn{A}_{3,[\gamma>0]}^2
     + \theta\int_M\vn{\nabla_{(2)}A}^2\gamma^4d\mu
\\*
&\qquad\qquad
     + c\vn{A}_{3,[\gamma>0]}^\frac{3}{2}\int_M(\vn{\nabla_{(2)}A}^2 + \vn{A}^6)\gamma^4d\mu
\\*&\qquad\qquad
     + c\big(c_{\gamma1}\big)^3\big(\vn{A}^3_{3,[\gamma>0]} + \vn{A}^\frac{9}{2}_{3,[\gamma>0]}\big).
\intertext{Absorbing,}
&\rD{}{t}\int_M \vn{A}^2\gamma^4d\mu
 + (2-\theta-\sqrt{\epsilon_0})
 \int_M \big(\vn{\nabla_{(2)}A}^2+\vn{A}^2\vn{\nabla A}^2+\vn{A}^6\big)\gamma^4d\mu
\\*
&\qquad\le c\big(1 + C^\frac{1}{3}_{A\!B} + C^\frac{1}{3}_{A\!B}\vn{h}^2_{\infty,[0,T^*]}
             + \epsilon_0^\frac{23}{6}+\epsilon_0^\frac{4}{3}\big)\epsilon^\frac{2}{3}
\\*
&\qquad\le c\epsilon^\frac{2}{3}.
\end{align*}
For $\theta$, $\epsilon_0$ small enough
we have
\[
  \rD{}{t}\int_{M} \vn{A}^2 \gamma^4d\mu 
+ \int_{M}
             \big(\vn{\nabla_{(2)}A}^2 + \vn{A}^2\vn{\nabla A}^2 + \vn{A}^6
             \big)\gamma^4 d\mu
\le c\epsilon^\frac{2}{3}.
\]
Integrating,
\begin{align*}
\int_{[\gamma=1]} \vn{A}^2 \gamma^4d\mu\ +
 &\int_0^t\int_{[\gamma=1]} (\vn{\nabla_{(2)}A}^2 +
\vn{A}^2\vn{\nabla A}^2 + \vn{A}^6) d\mu d\tau\\
&\le \int_{[\gamma>0]}\vn{A}^2d\mu\bigg|_{t=0}
 + c\epsilon^\frac{2}{3},
\end{align*}
where we used the fact $[\gamma=1]\subset[\gamma>0]$ and $0\le\gamma\le1$, with 
\[c = c(\epsilon_0, \vn{h}_{\infty,[0,t^*]}, c_{\gamma1},
c_{\gamma2},C_{A\!B}).\]
\end{proof}

\begin{rmk}
The assumption \eqref{AB} required for the three dimensional case is due to the fact that $L^2$
norms naturally arise when computing the evolution equations of various integral quantities, see the
proof of Corollary \ref{CorRE2} and Proposition \ref{p:prop3}.  Forcing $L^3$ norms in these
inequalities for the purpose of the above proof introduces changes in the exponents of the
$P$-terms, and to deal with this one would need to prove an altered form of Lemma \ref{MS1lem}.
This altered form will still require \eqref{AB} to handle the different exponents in the integrals.
So it seems to us that for the three dimensional case it is not possible to avoid assuming
\eqref{AB}, which is required to obtain results for non-trivial constraint functions regardless (see
Theorem \ref{thmapriorih}).
\end{rmk}

It remains only to prove the estimate used in the contradiction branch of the argument used to prove
the lifespan theorem.  
For this, we need some interpolation inequalities, and a preliminary
proposition.  We will only state the required interpolation inequality; the
proof can be found in \cite{kuwert2002gfw}.

\begin{prop}
\label{CORptermestimate}
Let $0\le i_1,\ldots,i_r\le k$, $i_1+\ldots+i_r = 2k$ and $s\ge 2k$.  Then for any tensor $T$
defined over an immersed hypersurface $f$ we have
\begin{equation*}
\int_M \nabla_{(i_1)}T*\cdots*\nabla_{(i_r)}T\gamma^sd\mu
  \le c\vn{T}^{r-2}_{\infty,[\gamma>0]}
       \left(\int_M\vn{\nabla_{(k)}T}^2\gamma^sd\mu + \vn{T}^2_{2,[\gamma>0]} \right).
\end{equation*}
\end{prop}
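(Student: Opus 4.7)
The plan is to reduce the claimed bound to the stated interpolation inequality via H\"older's inequality. First I would reorder the indices so that $i_1,\dots,i_m \ge 1$ and $i_{m+1}=\cdots=i_r=0$, and pull the $r-m$ zero-order factors out as $\vn{T}^{r-m}_{\infty,[\gamma>0]}$. If $m=0$ then necessarily $k=0$ and the estimate follows immediately from $\int_M T*\cdots*T\,\gamma^s\,d\mu \le c\vn{T}^{r-2}_{\infty,[\gamma>0]}\vn{T}^2_{2,[\gamma>0]}$. Otherwise, since each $i_l\le k$ while $\sum_{l=1}^m i_l = 2k$, necessarily $m\ge 2$.

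Next I would split the weight as $\gamma^s = \prod_{l=1}^m\gamma^{s i_l/(2k)}$ (valid since $\sum_l i_l/(2k)=1$) and apply H\"older's inequality with conjugate exponents $p_l = 2k/i_l$:
\begin{equation*}
\int_M \prod_{l=1}^m \vn{\nabla_{(i_l)}T}\,\gamma^{s i_l/(2k)}\,d\mu
  \le \prod_{l=1}^m \left(\int_M \vn{\nabla_{(i_l)}T}^{2k/i_l}\,\gamma^s\,d\mu\right)^{i_l/(2k)}.
\end{equation*}
I would then invoke the Kuwert--Sch\"atzle interpolation inequality (the one stated immediately above from \cite{kuwert2002gfw}) on each factor with $1\le i_l<k$, obtaining
\begin{equation*}
\int_M \vn{\nabla_{(i_l)}T}^{2k/i_l}\,\gamma^s\,d\mu \le c\,\vn{T}^{2(k-i_l)/i_l}_{\infty,[\gamma>0]}\left(\int_M \vn{\nabla_{(k)}T}^2\gamma^s\,d\mu + \vn{T}^2_{2,[\gamma>0]}\right),
\end{equation*}
with the case $i_l = k$ being the identity. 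Raising to the $i_l/(2k)$ power, the exponent of $\vn{T}_\infty$ becomes $1-i_l/k$, which on summation over $l$ gives $m-2$ thanks to $\sum i_l = 2k$, while the bracketed factor appears to total exponent $\sum_l i_l/(2k)=1$. Combining with the $\vn{T}^{r-m}_\infty$ prefactor harvested from the zero-order split yields the total $\vn{T}^{r-2}_{\infty,[\gamma>0]}$ multiplying the unweighted bracket, exactly as claimed.

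The main (and essentially only) substantive input is the cited interpolation inequality; everything else is exponent bookkeeping. The two cancellations making the argument close are $\sum_l 1/p_l = 1$, which makes H\"older applicable, and $\sum_l (1-i_l/k) = m-2$, which produces precisely the $\vn{T}^{r-2}_\infty$ demanded by the statement; both follow from the single identity $\sum i_l = 2k$. I do not anticipate any real obstacle.
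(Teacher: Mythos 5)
Your proof is correct and takes exactly the route the paper intends: Hölder with exponents $p_l=2k/i_l$ after splitting off the zero-order factors, then the Gagliardo--Nirenberg--type interpolation estimate with cutoff, followed by exponent bookkeeping via $\sum i_l=2k$. The paper gives no proof of this proposition (it simply defers to Kuwert--Sch\"atzle), and your argument is precisely the one carried out there; one small caveat is that the interpolation inequality you invoke is not the absorption-type estimate \eqref{TMint1} that actually appears ``immediately above'' in the paper, but the genuine $L^{2k/j}$--$L^\infty$--$L^2$ interpolation lemma from \cite{kuwert2002gfw}, which the paper references but does not display.
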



We now use this to derive the required proposition.

\begin{prop}
\label{p:prop45}
Suppose $f:M^n\times[0,T]\rightarrow\R^{n+1}$ is a \eqref{CSD} flow and
$\gamma:M\rightarrow\R$ a cutoff function as in \eqref{e:gamma}.  Then, for $s \ge 2k+4$ the
following estimate holds:
\begin{equation}
\label{e:prop45}
  \begin{split}
&\rD{}{t}\int_{M} \vn{\nabla_{(k)}A}^2\gamma^s d\mu
 + \int_{M} \vn{\nabla_{(k+2)}A}^2 \gamma^s d\mu \\
&\qquad\qquad\qquad 
  \le c\vn{A}^4_{\infty,[\gamma>0]}\int_M\vn{\nabla_{(k)}A}^2\gamma^sd\mu
       + c\vn{A}^2_{2,[\gamma>0]}(1+\vn{A}^4_{\infty,[\gamma>0]}) \\
&\qquad\qquad\qquad\qquad
       +ch\left(h^\frac{1}{3}\int_M\vn{\nabla_{(k)}A}^2\gamma^sd\mu
              + (1+h^\frac{1}{3})\vn{A}^2_{2,[\gamma>0]}\right).
  \end{split}
\end{equation}
\end{prop}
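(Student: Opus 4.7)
The plan is to start from Proposition \ref{p:prop3}, whose differential inequality already has the desired left-hand side, and to convert its right-hand side into the claimed form by systematic application of Proposition \ref{CORptermestimate} (the multiplicative $L^\infty$ estimate), interpolation, and Young's inequality. The term $(c+ch)\int_M\vn{A}^2\gamma^{s-4-2k}\,d\mu$ on the right-hand side of Proposition \ref{p:prop3} is bounded immediately by $c\vn{A}^2_{2,[\gamma>0]} + ch\vn{A}^2_{2,[\gamma>0]}$ using $0\le\gamma\le 1$, accounting for two of the target summands directly.

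For the curvature polynomial terms I classify each by the number of factors $r$ and the total number of derivatives. The term $P_5^k(A)*\nabla_{(k)}A$ has $r=6$ factors with indices summing to $2k$, so Proposition \ref{CORptermestimate} applies directly at level $k$ and yields $c\vn{A}^4_{\infty,[\gamma>0]}\bigl(\int_M\vn{\nabla_{(k)}A}^2\gamma^s\,d\mu + \vn{A}^2_{2,[\gamma>0]}\bigr)$, which matches two more target summands exactly. The $h$-term $h\int_M\nabla_{(k)}[A*A]*\nabla_{(k)}A\,\gamma^s\,d\mu$ expands via the Leibniz rule into an expression with $r=3$ factors and indices summing to $2k$; the same proposition then gives $ch\vn{A}_{\infty,[\gamma>0]}\bigl(\int_M\vn{\nabla_{(k)}A}^2\gamma^s\,d\mu + \vn{A}^2_{2,[\gamma>0]}\bigr)$, and Young's inequality $h\vn{A}_\infty \le \tfrac{3}{4}h^{4/3} + \tfrac{1}{4}\vn{A}^4_\infty$ splits this precisely into the $ch^{4/3}$ and $c\vn{A}^4_\infty$ contributions required.

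The delicate term is $P_3^{k+2}(A)*\nabla_{(k)}A$, which has $r=4$ factors but indices summing to $2(k+1)$; Proposition \ref{CORptermestimate} applied at level $k+1$ produces $c\vn{A}^2_{\infty,[\gamma>0]}\bigl(\int_M\vn{\nabla_{(k+1)}A}^2\gamma^s\,d\mu + \vn{A}^2_{2,[\gamma>0]}\bigr)$, which introduces a middle-order derivative. I will then interpolate via integration by parts and Cauchy-Schwarz to obtain
\begin{equation*}
\int_M\vn{\nabla_{(k+1)}A}^2\gamma^s\,d\mu \le \Bigl(\int_M\vn{\nabla_{(k+2)}A}^2\gamma^s\,d\mu\Bigr)^{1/2}\Bigl(\int_M\vn{\nabla_{(k)}A}^2\gamma^s\,d\mu\Bigr)^{1/2} + \text{lower order},
\end{equation*}
and then, multiplying by $c\vn{A}^2_\infty$, apply Young's inequality $2ab\le \eta a^2 + \eta^{-1}b^2$ to split this into $\eta\int_M\vn{\nabla_{(k+2)}A}^2\gamma^s\,d\mu + c\eta^{-1}\vn{A}^4_\infty\int_M\vn{\nabla_{(k)}A}^2\gamma^s\,d\mu$ plus lower-order pieces of the forms already accounted for above.

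With $\eta$ (and the $\theta$ of Proposition \ref{p:prop3}) chosen small enough, the top-order contribution is absorbed into the coefficient of $\int_M\vn{\nabla_{(k+2)}A}^2\gamma^s\,d\mu$ on the left-hand side, and the remaining terms combine with those of the preceding paragraphs to give the claim. I expect the main obstacle to be this middle-order absorption step: because the factor $\vn{A}^2_\infty$ is not a priori bounded, a naive interpolation of $\vn{\nabla_{(k+1)}A}$ purely against $\vn{\nabla_{(k+2)}A}$ would leave a non-constant coefficient $c\vn{A}^2_\infty\cdot\eta$ in front of $\int_M\vn{\nabla_{(k+2)}A}^2\gamma^s\,d\mu$ that could not be absorbed for any fixed $\eta$. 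The \emph{symmetric} Cauchy-Schwarz split is precisely what redistributes the non-constant factor so it appears as $\vn{A}^4_\infty$ alongside $\int_M\vn{\nabla_{(k)}A}^2\gamma^s\,d\mu$ (a legitimate right-hand-side term) while the top-order integral carries only an absolute constant.
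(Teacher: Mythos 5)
Your treatment of the constraint-function term is exactly the paper's: Leibniz on $\nabla_{(k)}[A*A]$, Proposition~\ref{CORptermestimate} with $r=3$ at level $k$ to get $ch\vn{A}_\infty\big(\int_M\vn{\nabla_{(k)}A}^2\gamma^s\,d\mu + \vn{A}^2_{2,[\gamma>0]}\big)$, then Young's inequality $h\vn{A}_\infty\le ch^{4/3}+c\vn{A}^4_\infty$. For the remaining $P$-terms the paper simply defers to \cite{kuwert2002gfw}, so your attempt to fill in those details is welcome, and the treatment of $(c+ch)\int\vn{A}^2\gamma^{s-4-2k}\,d\mu$ and of $P_5^k(A)*\nabla_{(k)}A$ (where $r=6$, all indices $\le k$, total $2k$) is correct.

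There is, however, a genuine gap in your handling of $P_3^{k+2}(A)*\nabla_{(k)}A$. You invoke Proposition~\ref{CORptermestimate} ``at level $k+1$,'' which requires every individual index $i_j\le k+1$. But $P_3^{k+2}(A)$ by definition contains the subterm $\nabla_{(k+2)}A*A*A$, so the product $\nabla_{(k+2)}A*A*A*\nabla_{(k)}A$ carries an index $k+2>k+1$ and the hypothesis of Proposition~\ref{CORptermestimate} fails for it. The subsequent interpolation of $\vn{\nabla_{(k+1)}A}^2$ against $\vn{\nabla_{(k)}A}$ and $\vn{\nabla_{(k+2)}A}$, which you describe carefully and correctly, comes \emph{after} the proposition and so cannot repair the fact that it was not applicable. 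The fix is small: either integrate by parts once in the offending subterm to shift a derivative off $\nabla_{(k+2)}A$ (producing indices $\le k+1$ plus a harmless $\nabla\gamma$ term, after which Proposition~\ref{CORptermestimate} at level $k+1$ applies), or, more directly, estimate
\[
\int_M \vn{\nabla_{(k+2)}A}\,\vn{A}^2\,\vn{\nabla_{(k)}A}\gamma^s\,d\mu
\le \theta\int_M\vn{\nabla_{(k+2)}A}^2\gamma^s\,d\mu + c_\theta\vn{A}^4_{\infty,[\gamma>0]}\int_M\vn{\nabla_{(k)}A}^2\gamma^s\,d\mu,
\]
which already has exactly the target structure, so the detour through the middle-order derivative is not even needed for the worst subterm; the remaining subterms of $P_3^{k+2}(A)*\nabla_{(k)}A$ \emph{do} have all indices $\le k+1$ and can be fed to Proposition~\ref{CORptermestimate} at level $k+1$ followed by your interpolation step. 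You should also note that your middle-order interpolation, when done via integration by parts with the weight $\gamma^s$ present, produces a boundary contribution controlled by \eqref{e:gamma}; this is what replenishes the $\vn{A}^2_{2,[\gamma>0]}$ term and should be made explicit.
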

\begin{proof}
The proof is similar to \cite{kuwert2002gfw}, except for the terms which involve the constraint
function.  The nontrivial term is estimated as follows.  Let $r=3$ and $i_1+i_2=k$, $i_3=k$ in
Corollary \ref{CORptermestimate} to obtain
\begin{align}
h\int_M\left(\nabla_{(k)}[A*A]*\nabla_{(k)}A\right) \gamma^{s} d\mu
  &\le ch\sum_{\substack{i_1+i_2 = k\\0\le i_j\le k}}
        \int_M \nabla_{(i_1)}A*\nabla_{(i_2)}A*\nabla_{(i_3)}A\gamma^sd\mu
\notag\\
  &\le ch\vn{A}_\infty\left(\int_M\vn{\nabla_{(k)}A}^2\gamma^sd\mu + \vn{A}^2_{2,[\gamma>0]}
                          \right)
\notag\\
  &\le c\vn{A}^4_\infty\left(\int_M\vn{\nabla_{(k)}A}^2\gamma^sd\mu + \vn{A}^2_{2,[\gamma>0]}
                          \right)
\notag\\
&\qquad
  + h^\frac{4}{3}\left(\int_M\vn{\nabla_{(k)}A}^2\gamma^sd\mu + \vn{A}^2_{2,[\gamma>0]}
                          \right),
\notag
\end{align}
using Young's inequality.
\end{proof}

We now finish this section with a proof of the higher derivatives of curvature estimate, which will
allow us to both bound the constraint function in balls other than the `special ball' (see Corollary
8) and perform the contradiction part of our overall argument used to prove the lifespan
theorem.

\begin{prop} 
\label{p:prop46}
Let $n\in\{2,3\}$.  Suppose $f:M^n\times[0,T^*]\rightarrow\R^{n+1}$ is a \eqref{CSD} flow with $h$
satisfying \eqref{A1} and $\gamma$ as in \eqref{e:gamma}.  If $n=3$ assume in addition \eqref{AB}.
Then there is an $\epsilon_0$ depending on the constants in \eqref{e:gamma} and
$\vn{h}_{\infty,[0,T^*]}$ such that if
\begin{equation}
\label{eq13}
\sup_{[0,T^*]}\int_{[\gamma>0]}\vn{A}^nd\mu\le\epsilon_0,
\end{equation}
we can conclude
\begin{equation}
\label{eq14}
\vn{\nabla_{(k)}A}^2_{\infty,[\gamma=1]}
 \le c\big(k,T^*,c_{\gamma1},c_{\gamma2},\vn{h}_{\infty,[0,T^*]},\alpha_0(k+2),C_{A\!B}\big)
.
\end{equation}
\end{prop}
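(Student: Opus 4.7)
The plan is to chain Proposition~\ref{p:prop44} (baseline $L^2$ control of $A$), Proposition~\ref{MS2prop} (conversion between local $L^2$ and $L^\infty$), Proposition~\ref{p:prop45} (evolution of $\int\vn{\nabla_{(k)}A}^2\gamma^s\,d\mu$), and the bound \eqref{A1} on $h$, closing the whole loop with Gr\"onwall's inequality.  Fix once and for all a finite nested family of cutoffs $\gamma=\gamma_0\ge\gamma_1\ge\cdots\ge\gamma_{k+3}\ge\chi_{[\gamma=1]}$, each of the form $\tilde\gamma_j\circ f$ satisfying \eqref{e:gamma} with uniform constants, so that every application of Proposition~\ref{p:prop45} at level $j$ may be made with $\gamma_{j+1}$ and the shift between $[\gamma_j>0]$ and $[\gamma_{j+1}=1]$ costs only an absolute constant.

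Step one (base case).  From Proposition~\ref{p:prop44}, applicable after possibly shrinking $\epsilon_0$, I get $\sup_{t\in[0,T^*]}\int_{[\gamma_1=1]}\vn{A}^2 d\mu\le C$ and $\int_0^{T^*}\!\int_{[\gamma_1=1]}(\vn{\nabla_{(2)}A}^2+\vn{A}^2\vn{\nabla A}^2+\vn{A}^6)\,d\mu\,d\tau\le C$.  The second statement of Proposition~\ref{MS2prop}, applied pointwise in $t$, then yields $\vn{A}^4_{\infty,[\gamma_2=1]}\le c\epsilon_0\vn{\nabla_{(2)}A}^2_{2,[\gamma_2>0]}+c\epsilon_0^2$ for $n=2$, which is in $L^1_t$ by the spacetime bound just obtained.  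For $n=3$ the exponent in \eqref{MS2secondstatement} jumps to $12$, so I first apply Proposition~\ref{p:prop45} at level $k=2$ and absorb the $\vn{A}^4_\infty E_2$ term on the right using the smallness of $\epsilon_0$ and \eqref{AB} (which governs the $L^3$ norms of $A$ entering Lemma~\ref{MS1lem}) to promote $\vn{\nabla_{(2)}A}^2_2$ to an $L^\infty_t$ bound; the same conclusion then follows.

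Step two (Gr\"onwall induction).  For $j=0,1,\dots,k+2$ in turn, apply Proposition~\ref{p:prop45} with cutoff $\gamma_{j+1}$ to obtain
\[
\rD{}{t} E_j + F_j \le c\bigl(\vn{A}^4_\infty+h+h^{4/3}\bigr)E_j
      + c\bigl(1+\vn{A}^4_\infty+h+h^{4/3}\bigr)\vn{A}^2_{2,[\gamma_{j+1}>0]},
\]
where $E_j=\int\vn{\nabla_{(j)}A}^2\gamma_{j+1}^s d\mu$ and $F_j=\int\vn{\nabla_{(j+2)}A}^2\gamma_{j+1}^s d\mu$.  The coefficient of $E_j$ is integrable in time by Step one and \eqref{A1}, and $E_j(0)\le\alpha_0(k+2)^2$.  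Gr\"onwall's inequality then gives $\sup_{[0,T^*]}E_j\le C$ and $\int_0^{T^*} F_j\,d\tau\le C$.  Iterating up to $j=k+2$ produces uniform $L^\infty_t L^2_x$ control of $\nabla_{(j)}A$ for $0\le j\le k+2$ on the support of $\gamma_{k+3}$.

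Step three (passage to $L^\infty$).  Apply the first statement of Proposition~\ref{MS2prop} to $T=\nabla_{(k)}A$ with cutoff $\gamma_{k+3}$: the right-hand side involves $\vn{\nabla_{(k)}A}^{4-n}_2$, $\vn{\nabla_{(k+2)}A}^n_2$, $\vn{\nabla_{(k)}A\cdot A^2}^n_2$ and $\vn{\nabla_{(k)}A}^n_2$, all of which are controlled by Steps one and two.  This delivers the stated bound on $[\gamma=1]$, with constants depending only on the data listed in~\eqref{eq14}.  The main obstacle is the $n=3$ half of Step one: the exponent mismatch in~\eqref{MS2secondstatement} prevents a one-shot bootstrap directly from the $L^1_t$ information of Proposition~\ref{p:prop44}, forcing the preliminary $k=2$ Gr\"onwall argument that leans on both the smallness $\epsilon_0$ and the area bound~\eqref{AB}; once this bound is in hand the induction runs uniformly for $n\in\{2,3\}$.
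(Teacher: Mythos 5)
Your overall scaffolding matches the paper's proof: nested cutoff functions, Proposition~\ref{p:prop44} for the baseline energy and spacetime bounds, Proposition~\ref{MS2prop} for $L^2$--$L^\infty$ conversion, Proposition~\ref{p:prop45} plus Gr\"onwall for the derivative bounds, and a final application of \eqref{MS2} to pass to $L^\infty$. Your Step two and Step three are essentially the paper's Steps, modulo the minor observation that the full induction $j=0,\dots,k+2$ is unnecessary --- since the coefficient $\|A\|^4_\infty+h+h^{4/3}$ of $E_j$ is controlled in $L^1_t$ by Step one alone, Gr\"onwall may be applied directly at levels $k$ and $k+2$ without passing through the intermediate $j$, and this is what the paper does.

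The genuine problem is your $n=3$ case in Step one. You propose to apply Proposition~\ref{p:prop45} at $k=2$ and \emph{absorb} the $\vn{A}^4_\infty E_2$ term, then use \eqref{MS2secondstatement}. This does not close. From \eqref{MS2secondstatement} with $n=3$ one has
$\vn{A}^4_{\infty}\le c\,\epsilon_0^{1/3}\bigl(\vn{\nabla_{(2)}A}^3_2+\epsilon_0^{1/3}\bigr)$,
so $\vn{A}^4_\infty E_2 \lesssim \epsilon_0^{1/3}E_2^{5/2}+\epsilon_0^{2/3}E_2$. The $E_2^{5/2}$ term is supercritical: interpolating $E_2\lesssim \vn{\nabla_{(4)}A}_2\vn{A}_2$ still leaves a power of $\vn{\nabla_{(4)}A}_2$ strictly greater than two, which cannot be absorbed into $F_2$, and there is no smallness in $\alpha_0$ to run a continuity/bootstrap in time, since $\alpha_0(k+2)$ may be arbitrarily large. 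In short, you need $\int_0^{T^*}\vn{A}^4_\infty d\tau<\infty$ \emph{before} you can Gr\"onwall $E_2$, and your workaround assumes exactly what it is trying to produce. The paper avoids this circularity by applying the \emph{first} statement of Proposition~\ref{MS2prop} (equation \eqref{MS2}) to $T=A$, which for $n=3$ gives
$\vn{A}^4_\infty\le c\vn{A}_{2}\bigl(\vn{\nabla_{(2)}A}^3_{2}+\vn{A^3}^3_{2}+\vn{A}^3_{2}\bigr)$,
and then pairs this with the spacetime bounds of \eqref{GradEsteq2} and \eqref{AB} to get $\int_0^{T^*}\vn{A}^4_{\infty,[\gamma\ge 3/4]}d\tau\le c\epsilon_0$ directly --- no preliminary $L^\infty_t$ bound on $E_2$ is required. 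You should replace your $n=3$ branch of Step one with this use of \eqref{MS2} (or else supply a genuinely different mechanism that avoids the supercritical power of $E_2$).
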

\begin{proof}
The idea is to use our previous estimates and then integrate.  
We fix $\gamma$ and consider special, tailor-made cutoff functions $\gamma_{\sigma,\tau}$ which
will allow us to combine our previous estimates.  Define for $0\le\sigma<\tau\le 1$ functions
$\gamma_{\sigma,\tau} = \psi_{\sigma,\tau}\circ\gamma$ satisfying $\gamma_{\sigma,\tau}=0$ for
$\gamma\le\sigma$ and $\gamma_{\sigma,\tau}=1$ for $\gamma\ge\tau$.  The function
$\psi_{\sigma,\tau}$ is chosen such that $\gamma_{\sigma,\tau}$ satisfies equation \eqref{e:gamma},
although with different constants.  Acceptable choices are
\[
 c_{\gamma_{\sigma,\tau}1} = \vn{\nabla\psi_{\sigma,\tau}}_\infty\cdot c_{\gamma1},
\text{ and }
c_{\gamma_{\sigma,\tau}2}
 = \max\{\vn{\nabla_{(2)}\psi_{\sigma,\tau}}_\infty\cdot c_{\gamma1}^2,
        \vn{\nabla\psi_{\sigma,\tau}}_\infty\cdot c_{\gamma2}\}.
\]
Using the cutoff function $\gamma_{0,\sfrac{1}{2}}$ instead of $\gamma$ in Proposition
\ref{p:prop44} gives
\begin{align}
  \int_0^{T^*}\int_{[\gamma\ge\sfrac{1}{2}]}\vn{\nabla_{(2)}A}^2+\vn{A}^6d\mu d\tau
    &\le c\epsilon_0(1+T^*)
\label{GradEsteq2}
\intertext{for $n=2$ and}
\notag
  \int_0^{T^*}\int_{[\gamma\ge\sfrac{1}{2}]}\vn{\nabla_{(2)}A}^2+\vn{A}^6d\mu d\tau
    &\le c\epsilon_0^\frac{2}{3}(C_{A\!B}^\frac{1}{3}+T^*)
\end{align}
for $n=3$.
Next, using $\gamma_{\sfrac{1}{2},\sfrac{3}{4}}$ in $\eqref{MS2}$ and equation \eqref{GradEsteq2}
above we obtain for $n=2$
\begin{equation}
\int_0^T\vn{A}^4_{\infty,[\gamma\ge\sfrac{3}{4}]}d\tau
  \le c\epsilon_0(c\epsilon_0(1+T^*)+\epsilon_0T^*)
  \le c\epsilon_0.
\label{GradEsteq3}
\end{equation}
For $n=3$ we have
\begin{equation}
\int_0^T\vn{A}^4_{\infty,[\gamma\ge\sfrac{3}{4}]}d\tau
  \le c(C_{A\!B})^\frac{1}{3}\epsilon_0^\frac{2}{3}
        \Big(2[c\epsilon_0^\frac{2}{3}(C_{A\!B}^\frac{1}{3}+T^*)]^\frac{3}{2}
           + c\epsilon_0(C_{A\!B})^\frac{1}{2}(T^*)^\frac{3}{2}\Big)
  \le c\epsilon_0,
\label{GradEsteq3new}
\end{equation}
where 
$c=c\big(\vn{h}_\infty, c_{\gamma1}, c_{\gamma2}, T^*, n, \epsilon_0
\big)$ for $n=2$ and
$c=c\big(\vn{h}_\infty, c_{\gamma1}, c_{\gamma2}, T^*, n, \epsilon_0,
C_{A\!B}\big)$ for $n=3$.
  We use the convention that for the remainder of this proof all constants $c$ will
depend on these quantities for $n=2$ and $n=3$ respectively.


We now use \eqref{e:prop45} with $\gamma_{\sfrac{3}{4},\sfrac{7}{8}}$.  Factorising, we have
\begin{align*}
\rD{}{t}\int_{M} \vn{\nabla_{(k)}A}^2\gamma_{\sfrac{3}{4},\sfrac{7}{8}}^s d\mu
&
  \le c\vn{A}^4_{\infty,[\gamma_{\sfrac{3}{4},\sfrac{7}{8}}\ge0]}
       \int_M\vn{\nabla_{(k)}A}^2\gamma_{\sfrac{3}{4},\sfrac{7}{8}}^sd\mu
\\
&\qquad
       + c\vn{A}^2_{2,[\gamma_{\sfrac{3}{4},\sfrac{7}{8}}\ge0]}
          \Big(1+\vn{A}^4_{\infty,[\gamma_{\sfrac{3}{4},\sfrac{7}{8}}\ge0]}\Big)
\\
&\qquad
       +ch\Big(h^\frac{1}{3}\int_M\vn{\nabla_{(k)}A}^2\gamma_{\sfrac{3}{4},\sfrac{7}{8}}^sd\mu
              + (1+h^\frac{1}{3})\vn{A}^2_{2,[\gamma_{\sfrac{3}{4},\sfrac{7}{8}}\ge0]}\Big)
\\
&
  \le c\Big(\vn{A}^4_{\infty,[\gamma\ge\sfrac{3}{4}]}+h^\frac{4}{3}\Big)
       \int_M\vn{\nabla_{(k)}A}^2\gamma_{\sfrac{3}{4},\sfrac{7}{8}}^sd\mu
\\
&\qquad
       + c\vn{A}^2_{2,[\gamma\ge\sfrac{3}{4}]}
         \Big(1+\vn{A}^4_{\infty,[\gamma\ge\sfrac{3}{4}]}+h+h^{\frac{4}{3}}\Big).
\end{align*}
%
Noting that the relevant integral quantities are bounded, we apply Gronwall's inequality and obtain
\begin{equation*}
  \int_{[\gamma\ge\sfrac{7}{8}]} \vn{\nabla_{(k)}A}^2d\mu
\le \beta(t) + \int_0^t \beta(\tau)\lambda(\tau)e^{\int_\tau^t\lambda(\nu)d\nu}d\tau
\le c\big(k,\alpha_0(k)\big)
,
\end{equation*}
where
\begin{align*}
\beta(t) &= \int_M \vn{\nabla_{(k)}A}^2\gamma_{\sfrac{3}{4},\sfrac{7}{8}}^sd\mu\bigg|_{t=0}
     + c\int_0^t
          \left[\vn{A}^2_{2,[\gamma\ge\sfrac{3}{4}]}
                \Big(1+\vn{A}^4_{\infty,[\gamma\ge\sfrac{3}{4}]}+h+h^{\frac{4}{3}}\Big)\right]d\tau,
\\
\lambda(t) &= 
        \vn{A}^4_{\infty,[\gamma\ge\sfrac{3}{4}]}+h^{\frac{4}{3}}.
\end{align*}
Trivially, we also have
\begin{equation*}
  \int_{[\gamma\ge\sfrac{7}{8}]} \vn{\nabla_{(k+2)}A}^2d\mu
\le c\big(k+2,\alpha_0(k+2)\big).
\end{equation*}
Therefore using \eqref{MS2secondstatement} with $\gamma_{\sfrac{7}{8},\sfrac{15}{16}}$ we can
bound $\vn{A}_\infty$ on a smaller ball:
\[
\vn{A}^{8n-12}_{\infty,[\gamma\ge\sfrac{15}{16}]}
\le c\epsilon_0\Big(\big[c(2,\alpha_0(2)\big)\big]^{\frac{2n^2-3n}{2}}+\epsilon_0\Big).
\]
Finally, using \eqref{MS2} with $T=\nabla_{(k)}A$ and $\gamma=\gamma_{\sfrac{15}{16},1}$ we 
obtain
\begin{align*}
\vn{\nabla_{(k)}A}_{\infty,[\gamma=1]}^4
 &\le c\vn{\nabla_{(k)}A}^{4-n}_{2,[\gamma>\sfrac{15}{16}]}
       \Big( \vn{\nabla_{(k+2)}A}^n_{2,[\gamma>\sfrac{15}{16}]}\\
&\hskip+3.5cm
                                   + (\vn{A}^{2n}_{\infty,[\gamma>\sfrac{15}{16}]}+1)
                                     \vn{\nabla_{(k)}A}^n_{2,[\gamma>\sfrac{15}{16}]}
       \Big)\\
 &\le c\big(k,\alpha_0(k+2)\big).
\end{align*}
This completes the proof of the proposition.
\end{proof}


\section{Proof of the lifespan theorem.}

Rescaling $\tilde{f}(x,t) = f(\frac{x}{\rho}, \frac{t}{\rho^4})$, the scale invariance of
\[
\int_{f^{-1}(B_\rho)} \vn{A}^n d\mu
\]
implies that we need only prove the theorem for $\rho=1$.
%
  We will show that 
\[
\tilde{T} \ge \frac{1}{c},
\]
and so scaling back we will conclude
inequality \eqref{eq2}.  

We make the definition
\begin{equation}
\label{e:epsfuncdef}
\eta(t) = \sup_{x\in\R^{n+1}}\int_{f^{-1}(B_1(x))} \vn{A}^nd\mu.
\end{equation}
By covering $B_1$ with several translated copies of $B_{\sfrac{1}{2}}$ there is a constant
$c_{\eta}$ such that
\begin{equation}
\label{e:epscovered}
\eta(t) \le c_{\eta}\sup_{x\in\R^{n+1}}\int_{f^{-1}(B_{\sfrac{1}{2}}(x))} \vn{A}^nd\mu.
\end{equation}
%
By short time existence we have that $f(M\times[0,t])$ is compact for $t<T$ and so the function
$\eta:[0,T)\rightarrow\R$ is continuous.  We now define
\begin{equation}
\label{e:struweparameter}
t^{(n)}_0
=
\begin{cases}
  \sup\{0\le t\le\min(T,\lambda_2) : \eta(\tau)\le 3c_{\eta}\epsilon_0
                                         \ \text{ for }\ 0\le\tau\le t\},
&\text{ $n=2$,}
\\
  \sup\{0\le t\le\min(T,\lambda_3) : \eta(\tau)\le 3c_{P22}c_\eta C^{1/3}_{A\!B}\epsilon_0^{2/3}
                                         \ \text{ for }\ 0\le\tau\le t\},
&\text{ $n=3$,}
\end{cases}
\end{equation}
where $\lambda_n$ is a parameter to be specified later.  
Recall that we assume \eqref{AB} in the case where $n=3$.  
The constant $c_{P22}$ is the maximum of 1 and the constant from Proposition \ref{p:prop46} with
$k=0$.
Note that the $\epsilon_0$ on the right hand side of the inequality is from equation \eqref{eq1}.

The proof continues in three steps.  First, we show that it must be the case that $t_0^{(n)} =
\min(T,\lambda_n)$.  Second, we show that if $t_0^{(n)} = \lambda_n$, then we can conclude the lifespan
theorem.  Finally, we prove by contradiction that if $T \ne \infty$, then $t_0^{(n)} \ne T$.  We
label these steps as 
\begin{align}
\label{e:7}
t_0^{(n)} &= \min(T,\lambda_n),\\
\label{e:8}
t_0^{(n)} &= \lambda_n \quad\Longrightarrow\quad\text{lifespan theorem},\\
\label{e:9}
T &\ne \infty\hskip+4.5mm \Longrightarrow\quad t_0^{(n)} \ne T.
\end{align}
The three statements \eqref{e:7}, \eqref{e:8}, \eqref{e:9} together imply the lifespan theorem.
%
We now give the proof of the first step, statement \eqref{e:7}.

From the assumption \eqref{eq1}, 
\[
\eta(0)\le\epsilon_0
 <
\begin{cases}
3c_{\eta}\epsilon_0,&\text{ for $n=2$}
\\
3c_{P22}c_\eta C^{1/3}_{A\!B}\epsilon_0^{2/3},&\text{ for $n=3$},
\end{cases}
\]
and therefore \eqref{e:struweparameter} implies $t^{(n)}_0 > 0$.  Assume for the sake of
contradiction that $t^{(n)}_0 < \min(T,\lambda_n)$.  Then from the definition
\eqref{e:struweparameter} of $t_0^{(n)}$ and the continuity of $\eta$ we have
\begin{equation}
\label{e:t0ltmin}
\eta\big(t_0^{(n)}\big) = 
\begin{cases}
3c_{\eta}\epsilon_0,&\text{ for $n=2$}
\\
3c_{P22}c_\eta C^{1/3}_{A\!B}\epsilon_0^{2/3},&\text{ for $n=3$,}
\end{cases}
\end{equation}
so long as $\epsilon_0 \le 1$ and $C_{A\!B},c_{P22} \ge 1$.
Recall Proposition \ref{p:prop44}.  We will now set $\gamma$ to be a cutoff function as in
\eqref{e:gamma} such that
\[
\chi_{B_{\sfrac{1}{2}}(x)} \le \tilde{\gamma} \le \chi_{B_1(x)},
\]
for any $x\in M_t$.
Choosing a small enough $\epsilon_0$ (by varying $\rho$ in \eqref{eq1}), definition
\eqref{e:struweparameter} implies that the smallness condition \eqref{eq9} is satisfied on
$[0,t_0^{(n)})$.
Due to our assumption \eqref{A1}, we also have that $\vn{h}_{\infty,[0,t_0^{(n)})} <
\infty$.  Therefore we have satisfied all the requirements of Proposition \ref{p:prop44}
, and so we conclude
\begin{equation}
\label{e:useprop44}
  \begin{split}
  \int_{f^{-1}(B_{\sfrac{1}{2}}(x))} \vn{A}^2 d\mu 
         &\le \int_{f^{-1}(B_1(x))} \vn{A}^2d\mu\bigg|_{t=0} + c_0c_{\eta}\epsilon_0^\frac{2}{n} t \\
         &\le 
\begin{cases}
2\epsilon_0,
&\text{for $n=2$ and }\lambda_2 = \frac{1}{c_0c_{\eta}},
\\
2c_{P22}C^{1/3}_{A\!B}\epsilon_0^{2/3},
&\text{for $n=3$ and }\lambda_3 = c_{P22}\frac{C^{1/3}_{A\!B}}{c_0c_{\eta}},
\end{cases}
  \end{split}
\end{equation}
for all $t \in [0,t^*]$, where $t^* < t_0^{(n)}$ and $c_0$ is the constant from Proposition
\ref{p:prop44}.  That is, equation \eqref{e:useprop44} above is true for all $t \in
\big[0,t_0^{(n)}\big)$.  We combine this with \eqref{e:epscovered} and Proposition \ref{p:prop46} to
conclude
\begin{equation}
\label{e:prop44pluscovering}
\eta(t)
 \le c^{n-2}_{P22}c_{\eta} \sup_{x\in\R^{n+1}} \int_{f^{-1}(B_{\sfrac{1}{2}}(x))} \vn{A}^2 d\mu 
 \le 
\begin{cases}
2c_{\eta}\epsilon_0,
&\text{ for $n=2$}
\\
2c_{P22}c_{\eta}C^{1/3}_{A\!B}\epsilon_0^{2/3},
&\text{ for $n=3$,}
\end{cases}
\end{equation}
where $0\le t < t_0^{(n)}$.

Since $\eta$ is continuous, we can let $t\rightarrow t_0^{(n)}$ and obtain a contradiction with
\eqref{e:t0ltmin}.  Therefore, with the choice of $\lambda_n$ in equation \eqref{e:useprop44}, the
assumption that $t_0^{(n)} < \min(T,\lambda_n)$ is incorrect.
Thus we have shown \eqref{e:7}, the first of our three steps.

We in fact have also proved the second step \eqref{e:8}. Observe that if $t_0^{(n)} =
\lambda_n$ then by the definition \eqref{e:struweparameter} of $t_0^{(n)}$, 
\[
  T\ge\lambda_n,
\]
which is \eqref{eq2}.  Also, \eqref{e:prop44pluscovering} implies \eqref{eq3}.  That is,
we have proved if $t_0^{(n)} = \lambda_n$, then the lifespan theorem holds, which is the second step
\eqref{e:8}.  It only remains to prove equation \eqref{e:9}.

We assume
\[
t_0^{(n)}=T\ne\infty;
\]
since if $T=\infty$ then \eqref{eq2} holds automatically and again \eqref{e:prop44pluscovering}
implies \eqref{eq3}.  Note also that we can safely assume $T < \lambda_n$, since otherwise
we can apply step two to conclude the lifespan theorem.

%

Since $T < \lambda_n$, \eqref{A1}
infers the existence of a $\varsigma > 0$ such that
\[
\vn{h}_{\infty,[0,T+\varsigma)} < \infty,
\]
which is enough (in terms of the constraint function) for short time existence to begin again at
time $T$.  To show that we may also extend the immersion $f$ to a time interval $[0,T+\varsigma)$,
we use Proposition \ref{p:prop46} and follow a standard proof such as that found in
\cite{kuwert2002gfw} or \cite{huisken1984fmc}.  Therefore we can extend the flow, contradicting the
maximality of $T$.

This establishes \eqref{e:9} and the theorem is proved.
\qed

\section{Concluding remarks.}

As mentioned earlier, Kuwert \& Sch\"atzle \cite{kuwert2002gfw} proved a lifespan theorem for the
Willmore flow,
\[
\pD{}{t}f = \big(\Delta H + Q(A)\big)\nu,
\]
where they considered surfaces immersed in $\R^n$ via $f$, i.e. $f:M^2\rightarrow\R^n$.  Note that
in one codimension $Q(A) =  \vn{A^o}^2H$.  We remark that one may consider the evolution equation
\[
\pD{}{t}f = \big(\Delta H + \tilde{Q}(A)\big)\nu,
\]
where $f:M^2\rightarrow\R^3$, with $\tilde{Q}(A)$ a term which may be estimated as
\begin{equation}
\tilde{Q} \le P_3^0(A)
\label{concremeq1}
\end{equation}
and recover a lifespan theorem.  One may employ exactly the techniques in \cite{kuwert2002gfw} to
obtain this result.  This is essentially due to the integral estimates not depending on the precise
form of the $P$-style terms.  It may be possible to improve the growth condition \eqref{concremeq1}
above to include some derivatives and more copies of $A$, however we have not pursued this.
  Of course combining this remark with the analysis we present in this paper for constrained
flows will give a lifespan theorem for flows of the form
\[
\pD{}{t} = \big(\Delta H + P_3^0(A) + h\big)\nu.
\]
Apart from constrained Willlmore flows (for which one may compute constraint functions which give
monotone area, volume, etc) we are not aware of any interesting examples of such flows.  For
immersions of dimension greater than 3, one will still be restricted by the Sobolev inequality
Theorem \ref{myLZthm}, and the local version Proposition \ref{MS2prop}.  We are not aware of any
technique which may be used to completely remove this restriction.

\section*{Acknowledgements}

This work forms part of the corresponding author's PhD thesis under the support of an Australian
Postgraduate Award.

Parts of this research was completed during two visits by the corresponding author to the Freie
Universit\"at in Berlin.  The first visit was under the support of the Deutscher Akademischer
Austausch Dienst, and the second supported by the Research Group in Geometric Analysis at the Freie
Universit\"at.
He is grateful for their support and hospitality.
The corresponding author would also like to thank Prof. Dr. Kuwert for useful discussions on this
work.

The first author was partly supported by an Australian Postdoctoral Fellowship as part of a
Discovery Grant from the Australian Research Council entitled ``Singularities and Surgery in
Curvature Flows''.  Parts of this work were completed while the first author was visiting the
University of Queensland and the Australian National University.  He is grateful for their support
and hospitality.

\bibliographystyle{spmpsci}
\bibliography{mbib}

\end{document}